\documentclass{amsart}
\usepackage[margin=1.3in]{geometry}
\usepackage{amsmath}
\usepackage{amssymb}
\usepackage{mathtools}
\usepackage{graphicx}
\usepackage{tikz}
\usepackage{color}
\usepackage[all]{xy}
\usepackage{amsthm}
\usepackage{verbatim}
\usepackage[normalem]{ulem}

\usetikzlibrary{arrows,shapes,positioning}
\usetikzlibrary{decorations.markings}
\tikzstyle arrowstyle=[scale=1]
\tikzstyle directed=[postaction={decorate,decoration={markings,
    mark=at position .55 with {\arrow[arrowstyle]{stealth}}}}]
\tikzstyle ddirected=[postaction={decorate,decoration={markings,
    mark=at position .45 with {\arrow[arrowstyle]{stealth}},
    mark=at position .55 with {\arrow[arrowstyle]{stealth}}}}]
\tikzstyle reverse directed=[postaction={decorate,decoration={markings,
    mark=at position .55 with {\arrowreversed[arrowstyle]{stealth};}}}]
\tikzstyle reverse ddirected=[postaction={decorate,decoration={markings,
    mark=at position .55 with {\arrowreversed[arrowstyle]{stealth};},
    mark=at position .65 with {\arrowreversed[arrowstyle]{stealth};}}}]

\usepackage[colorlinks,linkcolor=blue,citecolor=blue,pdfstartview=FitH]{hyperref}

\theoremstyle{plain}

\newtheorem{para}{}[section]
\newtheorem{thm}[para]{Theorem}
\newtheorem{prop}[para]{Proposition}
\newtheorem{lemma}[para]{Lemma}

\newtheorem{fact}[para]{Fact}

\theoremstyle{remark}

\newtheorem*{remark}{Remark}

\theoremstyle{definition}

\newcommand{\C}{\mathbb{C}}

\newcommand{\Q}{\mathbb{Q}}
\newcommand{\R}{\mathbb{R}}
\newcommand{\Z}{\mathbb{Z}}
\newcommand{\Sp}{\mathbb{S}}

\begin{document}

\title{One-cusped Dehn fillings of the sisters of the Whitehead and $6^2_2$ link complements}
\author{Priyadip Mondal}
\address{Department of Mathematics, Ben Gurion University of the Negev, David Ben-Gurion Blvd. 1, P.O.B. 653, Be'er Sheva 8410501, Israel} \email{priyadip@post.bgu.ac.il}
\begin{abstract}  In this article, we investigate the arithmeticity of the one-cusped Dehn fillings of the $(-2,3,8)$-pretzel link complement and of the Berge manifold, which respectively are the sisters of the Whitehead and $6^2_2$ link complements. We show that for each such one-cusped hyperbolic Dehn filling, the cusp field, the trace field and the invariant trace field coincide. Moreover, we establish that no one-cusped hyperbolic Dehn filling of the Berge manifold is arithmetic and that the only arithmetic one-cusped hyperbolic Dehn filling of the $(-2,3,8)$-pretzel link complement is the sister of the figure eight knot complement. The techniques used to prove these results further show that each knot complement covering a one-cusped hyperbolic Dehn filling of either of these two sisters manifolds admits no hidden symmetries, effectively generalizing already known results in this regard. \end{abstract}
\maketitle
\section{Introduction}
This paper is devoted to the study of arithmetic invariants of hyperbolic $3$-manifolds and their Dehn fillings. We here direct our attention to the $(-2,3,8)$-pretzel link complement and the Berge manifold. These are two of the four $2$-cusped orientable hyperbolic $3$-manifolds of lowest complexity $4$ (in the sense of Matveev  \cite{Matveev}) and lowest known volume, as observed by Martelli and Petronio \cite{MartPet} from the work of Callahan, Hildebrand and Weeks \cite[Appendix A]{CHW}, the other two being their “siblings” the Whitehead link and $6_2^2$ link complements respectively. Agol \cite[Theorem 3.6]{Agol} in fact proved that the complements of the Whitehead link and the $(-2,3,8)$-pretzel link do have minimal volume among all $2$-cusped such manifolds. All four of these manifolds are also arithmetic, the Whitehead link and $(-2,3,8)$-pretzel link complements covering $\mathbb{H}^3/\mathrm{PSL}_2(\mathcal{O}_1)$ and the Berge manifold and $6_2^2$ link complement covering $\mathbb{H}^3/\mathrm{PSL}_2(\mathcal{O}_3)$.

The non-hyperbolic Dehn fillings of the $(-2,3,8)$-pretzel link complement and the Berge manifold were classified in \cite{MartPet}.\footnote{In fact, the $(-2,3,8)$-pretzel link complement, Berge manifold, Whitehead link complement and $6_2^2$ link complement are exactly the exceptional examples listed in \cite[Corollary A.2 and Table A.1]{MartPet}, i.e., except these four manifolds all two cusped hyperbolic manifolds obtained by Dehn filling a single cusp of the magic manifold  have exactly five exceptional slopes for one-cusp Dehn fillings, wheres each of these four manifolds have six.} We here concentrate on the hyperbolic Dehn fillings of the $(-2,3,8)$-pretzel link complement and the Berge manifold, more specifically the one-cusped hyperbolic Dehn fillings. Our approach in this venture is three-fold: 
\begin{itemize}
\item we relate them with the notion of hidden symmetries, 
\item we classify which such ones are arithmetic, and 
\item we correlate their associated number fields. 
\end{itemize}

Recall that hidden symmetries are symmetries between finite covers which are not lifts of actual symmetries. Here, we establish the following result. 
\newcommand\hidsym{No hyperbolic knot complement covering a one-cusped Dehn filling of either the Berge manifold or the $(-2,3,8)$-pretzel link complement has hidden symmetries.}
\theoremstyle{plain}
\newtheorem*{hiddsymthm}{Theorem \ref{hidd_sym_effect}}
\begin{hiddsymthm}\hidsym\end{hiddsymthm}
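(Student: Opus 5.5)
Let $M$ be either the Berge manifold or the $(-2,3,8)$-pretzel link complement, let $M(\alpha)$ be a one-cusped hyperbolic Dehn filling of $M$, and let $K$ be a hyperbolic knot complement covering $M(\alpha)$. The plan is to combine two standard criteria, both due to Neumann and Reid. First, an arithmetic knot complement has hidden symmetries only if it is the figure eight knot complement, and the figure eight knot complement has no Dehn filling description of the relevant kind here. Second, a non-arithmetic knot complement with hidden symmetries must have cusp field equal to $\Q(i)$ or $\Q(\sqrt{-3})$: hidden symmetries force the cusp to cover a rigid cusped orbifold, so the cusp shape lies in one of these two fields. The cusp field is a commensurability invariant, and $K$ covers $M(\alpha)$. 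So it suffices to show that for every hyperbolic filling $M(\alpha)$ the cusp field is neither $\Q(i)$ nor $\Q(\sqrt{-3})$, except possibly when $M(\alpha)$ is arithmetic. The arithmetic cases are then treated separately.

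The key steps, in order, are as follows.

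\textbf{Step 1.} Use the results announced in the abstract. The cusp field, trace field and invariant trace field of $M(\alpha)$ coincide; call this common field $k_\alpha$. Moreover, $M(\alpha)$ is arithmetic only when it is the sister of the figure eight knot complement, which is a filling of the pretzel link complement.

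\textbf{Step 2.} Suppose $M(\alpha)$ is non-arithmetic and $k_\alpha$ is imaginary quadratic. Then its invariant trace field is imaginary quadratic. By Step 1 the trace field equals the invariant trace field, so $M(\alpha)$ has integral traces (it is a filling with a cusp) and imaginary quadratic trace field. This forces arithmeticity by the usual characterization (Maclachlan–Reid: integral traces together with the field condition, which is vacuous for imaginary quadratic fields). That contradicts non-arithmeticity. Hence for non-arithmetic fillings $k_\alpha$ is not $\Q(i)$ or $\Q(\sqrt{-3})$, and no knot complement covering $M(\alpha)$ has hidden symmetries.

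\textbf{Step 3.} For the sister of the figure eight knot complement, any knot complement covering it is arithmetic. By Reid's theorem it must then be the figure eight knot complement. The figure eight knot complement does cover... in fact it does not cover its sister: the two have equal volume and are not homeomorphic, so a cover would have degree one, which is impossible. Hence this case contributes no knot complements at all.

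The main obstacle is Step 1: proving that the cusp field equals the invariant trace field for every filling, and classifying the arithmetic fillings. That is where the actual work of the paper lies, presumably via explicit holonomy computations on a parametrized deformation variety together with Hodgson–Meyerhoff–Weeks style arguments. Given those results, the theorem above follows formally.
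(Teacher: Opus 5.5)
Your skeleton matches the paper's. The paper applies Neumann--Reid's Proposition 9.1, uses commensurability invariance of the cusp field, and disposes of \texttt{m003} via Reid's theorem plus the equal-volume argument. However, Step 2 has a genuine gap.

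You assert that $M(\alpha)$ has integral traces because it is a filling with a cusp. That is not a general fact. A cusp only forces the parabolic traces to be $\pm 2$, and Bass's theorem gives integrality of all traces only under the hypothesis that there is no closed essential surface. Neither the paper nor anything general establishes this for every filling. Indeed, the generator traces computed in the proofs of Theorems \ref{Bcusptrace} and \ref{tracecusp_pret}, such as $\operatorname{trace}(a)=\frac{1}{z_1-1}$ or $-\frac{2-\sigma_1}{1-\sigma_1+\sigma_2}$, have denominators that are never shown to be units.

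Without integrality, the arithmeticity criterion only runs one way: arithmetic implies that the invariant trace field, and hence the non-real cusp field, is imaginary quadratic. That is the direction the paper uses. You need the opposite-flavoured statement, that a non-arithmetic filling has a cusp field that is not imaginary quadratic. It cannot be extracted from the bare classification of arithmetic fillings, because the integrality hypothesis is not automatic for cusped groups. The equality of trace field and invariant trace field that you invoke does nothing to close this.

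The fix is to use what the proofs of Theorems \ref{berge_arith_thm} and \ref{pretzel_arith_thm} actually establish, which is strictly stronger than their statements: the cusp field itself is never imaginary quadratic, except for \texttt{m003}. The paper proves this directly, in three parts.
\begin{enumerate}
\item The cusp modulus ($z_1$, resp.\ $\tau$) is shown to be a non-rational unit algebraic integer (Propositions \ref{Balgint} and \ref{pret_algint}).
\item Fact \ref{quad_unit} then leaves only $\mathrm{i}$ and $\frac{\pm 1+\sqrt{3}\mathrm{i}}{2}$. These are ruled out, or identified with \texttt{m003}, by substituting into the Dehn filling equation.
\item The leftover pretzel families $(2q,q)$ and $(p,2p)$ are handled by the degree count $[\Q(\cos(\pi/p)):\Q]=\varphi(2p)/2$ together with irreducibility checks (Proposition \ref{p2pfilling} and the $R_3$ case).
\end{enumerate}
Feeding this cusp-field statement straight into Neumann--Reid's Proposition 9.1 requires no integrality and no trace-field equality. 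With Step 2 replaced this way, your Step 3 completes the argument as in the paper.
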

The above result effectivizes \cite[Proof of Theorem 1.1]{Hoffman_3comm} and \cite[Example 5.2, Example 5.3] {CDM} (and generalizes \cite[Proof of Theorem 6.1]{Hoffman_hidden}  for orbifold fillings). Its analog was proved for the Whitehead link complement in \cite{NeumReid} and for the $6^2_2$ link complement in \cite{CDHMMW}. Our result here is effective in contrast with that the ``all but finitely many cases" conclusion in \cite[Proof of Theorem 1.1]{Hoffman_3comm} as our techniques here allow us to follow the behavior of the cusp fields of all one-cusped hyperbolic Dehn fillings of the two manifolds, whereas \cite[Proof of Theorem 1.1]{Hoffman_3comm} employed a limiting argument pertaining to orbifold fundamental groups and discerning cusp field behavior for such fillings was out of its scope. In the same vein, the method used in \cite[Example 5.2, Example 5.3] {CDM} also invokes the ``all but finitely many cases" limitation.

Theorem \ref{hidd_sym_effect} above emerges from the results and observations laid out in our second direction, which is the technical core of our paper. In this direction, we study the arithmeticity of the one-cusped Dehn fillings of the $(-2,3,8)$-pretzel link complement and the Berge manifold. The work of Borel (see \cite[Theorem 11.2.1 and Corollary 11.2.2]{MacRe}) shows that at-most finitely many such one-cusped Dehn fillings could be arithmetic. It also follows from \cite[Lemma 3.1]{Hoffman_3comm} and \cite[Theorem 2]{ReidFig8} that the family of one-cusped fillings of the Berge manifold that Hoffman considered in \cite{Hoffman_3comm} are non-arithmetic (see \cite[Proof Theorem 6.1]{Hoffman_hidden}). However, complete classification of arithmetic one-cusped Dehn fillings of the $(-2,3,8)$-pretzel link complement and the Berge manifold is un-known. In this article, we address this by proving the following two theorems. 
\newcommand\bergearith{No one-cusped hyperbolic orbifold obtained by Dehn filling a single cusp of the Berge manifold is arithmetic.}
\theoremstyle{plain}
\newtheorem*{Bergeariththm}{Theorem \ref{berge_arith_thm}}
\begin{Bergeariththm}\bergearith\end{Bergeariththm}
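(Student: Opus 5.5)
The approach is through the cusp field. If a one-cusped hyperbolic orbifold $M(p/q)$, obtained by filling one cusp of the Berge manifold $B$, is arithmetic, then it is non-compact. A non-compact arithmetic orbifold is commensurable with some Bianchi orbifold $\mathbb{H}^3/\mathrm{PSL}_2(\mathcal{O}_d)$, so its invariant trace field is an imaginary quadratic field $\Q(\sqrt{-d})$. Its cusp field, the field generated by cusp shapes, is contained in the invariant trace field (and in fact equals it for arithmetic cusped orbifolds). I would therefore prove the theorem by showing, for every hyperbolic slope $p/q$, either that the cusp field of $B(p/q)$ is not imaginary quadratic, or that the trace field is strictly larger than the cusp field. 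Either conclusion rules out arithmeticity.

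\textbf{Step 1 (Parametrization).} Start from an explicit ideal triangulation of $B$, built from the four regular ideal tetrahedra that realize its arithmetic structure over $\Q(\sqrt{-3})$. Write the gluing equations with shape parameters, keep the completeness equation for the unfilled cusp, and replace the completeness equation for the filled cusp by the Dehn filling equation $p\,\log H(m)+q\,\log H(l)=2\pi i$. Using the symmetries of $B$, reduce the deformation variety to a single complex variable $z$, which is a curve $\mathcal{C}$ over $\Q(\sqrt{-3})$. On $\mathcal{C}$, express the holonomy eigenvalues $m,l$ of the filled cusp and the cusp shape $\sigma(z)$ of the remaining cusp as explicit rational functions of $z$ (possibly after passing to a low-degree cover of the curve).

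\textbf{Step 2 (Degree of the cusp field).} The filling point $z_{p/q}$ satisfies $m^p l^q=1$, where $m$ and $l$ are rational functions on $\mathcal{C}$. Use the known explicit forms of $m$ and $l$ (they should be products of powers of $z$ and $1-z$) to rewrite this as a polynomial equation of the shape $z^a(1-z)^b=\pm$ const with $a,b$ linear in $p,q$. Then show that $\Q(z_{p/q})=\Q(\sigma(z_{p/q}))$, and that the degree of this field is at least $3$ outside a short explicit list of slopes. The tool I would use is an irreducibility argument for such trinomial-type polynomials, in the style of Ljunggren/Selmer or via Newton polygons at the primes above $2$ and $3$, combined with the observation that a quadratic $z$ would force the polynomial to have a factor of degree at most $2$. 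This forces $|a|,|b|$ to be small. I would also show that the trace field coincides with the cusp field, which gives the field coincidence claimed in the abstract.

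\textbf{Step 3 (Finite check).} The remaining slopes are excluded by the six exceptional slopes (classified by Martelli--Petronio) together with finitely many small $p/q$. For these, compute the invariant trace field and the traces directly, for example with SnapPy or Snap, or compute the cusp shape explicitly. Then verify that either the field is not imaginary quadratic, or some trace is non-integral, or the cusp shape does not lie in $\Q(\sqrt{-d})$ with the required integrality.

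\textbf{Main obstacle.} I expect Step 2 to be the hardest: uniformly proving that $z_{p/q}$ has degree greater than $2$, and identifying the cusp field with $\Q(z)$, for all $p/q$ rather than for all but finitely many slopes. My first attempt would be a Newton-polygon or valuation argument at a prime of $\Q(\sqrt{-3})$ dividing the constant term. That argument bounds how the roots can split, which should force any factor of degree at most $2$ to exist only for bounded $a,b$.
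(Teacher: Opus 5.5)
Your overall frame matches the paper's. A non-compact arithmetic orbifold has imaginary quadratic invariant trace field, and the cusp field is a non-real subfield of it. So it suffices to show the cusp field $\Q(z_1)$ (the cusp shape is $\zeta_2(z_1)$) is never imaginary quadratic.

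The gap is Step 2, which is not carried out and rests on a wrong picture of the filling equation. After imposing completeness of the unfilled cusp ($z_0=z_1$), the filled-cusp holonomies are $\zeta_1(z_2)/\zeta_1(z_3)$ and $z_3/z_2$. Here $z_2,z_3$ are the two roots of $X^2-\sigma_1'X+\sigma_2'$, with $\sigma_1'=-z_1^2+2z_1+1-2z_1^{-1}+z_1^{-2}$ and $\sigma_2'=(1-z_1^{-1})^2$. So the filling equation is $z_2^q(1-z_2)^p=z_3^q(1-z_3)^p$ on a double cover of the $z_1$-line. It is not a trinomial $z^a(1-z)^b=\pm c$, and Ljunggren/Selmer-type irreducibility does not apply. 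Your valuation fallback is also vacuous. Divide by $z_2-z_3$, rewrite the symmetric quotient in $\sigma_1',\sigma_2'$, and substitute: you get a polynomial in $\Z[z_1]$ whose leading coefficient and constant term are $\pm1$ (for $p,q\ge 0$). Hence every root is a unit at every finite prime, and all Newton polygons are flat. Without a uniform degree bound, Step 3 has no explicit finite list of slopes to check, and the argument does not close.

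The missing idea is to exploit that unit property instead of trying to prove irreducibility. Since $z_1$ is a unit algebraic integer in the upper half-plane, if $\Q(z_1)$ were imaginary quadratic then $z_1\in\{\mathrm{i},(\pm1+\sqrt{3}\mathrm{i})/2\}$, because only $\Q(\mathrm{i})$ and $\Q(\sqrt{-3})$ contain non-real units. That leaves three values, independent of $(p,q)$:
\begin{itemize}
\item $(1+\sqrt{3}\mathrm{i})/2$ is the complete structure;
\item for $z_1=\mathrm{i}$ and $z_1=(-1+\sqrt{3}\mathrm{i})/2$, one computes $z_2,z_3$ explicitly and checks that the filling equation holds for no $(p,q)\neq(0,0)$ with $p,q\ge0$.
\end{itemize}
Two reductions make the coefficient computation uniform: the cusp-exchanging isometry (so you fill only one cusp) and the order-$6$ symmetry of the hexagonal cusp lattice (so you may take $p,q\ge 0$). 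No exceptional-slope list or machine computation of trace fields is needed.

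Finally, drop the alternative criterion that the trace field be strictly larger than the cusp field. Arithmeticity constrains only the invariant trace field and integrality of traces. An arithmetic group's trace field can strictly contain its invariant trace field; for example, $\mathrm{PGL}_2(\Z[\mathrm{i}])$ has elements of trace $\pm\sqrt{2}$.
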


\newcommand\pretzelarith{The only arithmetic one-cusped hyperbolic orbifold obtained by Dehn filling a single cusp of the $(-2,3,8)$-pretzel link complement is \texttt{m003}, which is obtained by the $(1,2)$-Dehn filling on the $c_0'$ cusp with respect to $(\mathbf{m}_0, \mathbf{l}_0)$.}
\theoremstyle{plain}
\newtheorem*{Pretzelariththm}{Theorem \ref{pretzel_arith_thm}}
\begin{Pretzelariththm}\pretzelarith\end{Pretzelariththm}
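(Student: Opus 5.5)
Since a one-cusped arithmetic orbifold is non-cocompact, it is commensurable with $\mathbb{H}^3/\mathrm{PSL}_2(\mathcal{O}_d)$ for some $d$. Arithmeticity therefore forces two things: the invariant trace field must be an imaginary quadratic field $\Q(\sqrt{-d})$, and all traces must be algebraic integers. The plan is to make the invariant trace field of each filling explicit and show it is imaginary quadratic for only one slope.

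\textbf{Step 1: reduce to the cusp field.} Take an explicit ideal triangulation (or a two-generator presentation with parabolic representation) of the $(-2,3,8)$-pretzel link complement, with the cusps $c_0, c_0'$ and the meridian--longitude pair $(\mathbf{m}_0,\mathbf{l}_0)$ as fixed earlier in the paper. Write the gluing equations for the $(p,q)$-filled cusp $c_0'$ together with the completeness equation on the remaining cusp. This gives, for each coprime $(p,q)$ outside the exceptional set of \cite{MartPet}, a polynomial system $F_{p,q}(z)=0$ in one shape parameter $z$ after elimination. Invoking the paper's result that the cusp field, trace field and invariant trace field coincide, the invariant trace field equals the cusp field $\Q(z_{p,q})$ of the unfilled cusp, which is generated by the cusp shape, itself a rational function of the geometric root.

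\textbf{Step 2: degree bound.} Arithmeticity requires $[\Q(z_{p,q}):\Q]\le 2$, with $z_{p,q}$ non-real. I will analyse the eliminated polynomial, which should have the shape $A(z)^{p}B(z)^{q}-C(z)^{p}D(z)^{q}$ from the holonomy equation. I will show that for $|p|+|q|$ large the geometric root lies near the complete-structure shape $z_\infty$, which has $\Q(z_\infty)=\Q(i)$. The key point is that it cannot lie in a quadratic field. For this I would combine two facts. First, an algebraic integer (or a root of bounded height) in an imaginary quadratic field close to $z_\infty$ but different from it is impossible once the distance is below the height bound of $\Q(\sqrt{-d})$. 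Second, the Neumann--Zagier asymptotic gives the distance $|z_{p,q}-z_\infty|\asymp 1/(p^2+q^2)$ when the filling is measured in the cusp-shape metric. Alternatively, and perhaps more robustly, I would use a Newton polygon or $p$-adic valuation argument on $F_{p,q}$ to show it has an irreducible factor of degree growing with $|p|,|q|$ containing the geometric root. This leaves finitely many slopes.

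\textbf{Step 3: finite check.} For the remaining finitely many slopes, compute the invariant trace field with SnapPy/Snap. Every one except $(1,2)$ should have degree greater than $2$, or else have non-integral traces. For $(1,2)$, identify the filling as \texttt{m003}, which is known to be arithmetic, since it is commensurable with $\mathrm{PSL}_2(\mathcal{O}_3)$, being a sister of the figure-eight knot complement.

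The main obstacle is Step 2, namely making the bound on exceptional small-degree slopes effective and uniform over all $(p,q)$ and not merely asymptotic. I would first try the valuation/Newton-polygon approach, since it yields explicit irreducibility information directly from $F_{p,q}$.
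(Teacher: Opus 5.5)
\textbf{There is a genuine gap in Step 2.} It is more than a matter of making constants explicit.

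\emph{Integrality.} Your discreteness argument needs the cusp-field generator to be an algebraic integer. Non-real quadratic integers $x+y\mathrm{i}$ satisfy $2x\in\Z$ and $x^2+y^2\in\Z$, so they form a discrete set. Without integrality, imaginary quadratic numbers (already those of $\Q(\mathrm{i})$) are dense near $z_\infty$. You never establish integrality or a height bound for $z_{p,q}$, and arithmeticity gives integrality of traces, not of shape parameters or cusp moduli. Here it could be extracted from the computation $\tau=\operatorname{Trace}(cd)$ in the proof of Theorem \ref{tracecusp_pret}, but you do not use this.

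\emph{Effectivity.} Even granting integrality, a Neumann--Zagier estimate only yields ``all but finitely many slopes''. That is already known from Borel's finiteness theorem. The content of Theorem \ref{pretzel_arith_thm} is the explicit list. Without an effective bound on $|\tau_{p,q}-\mathrm{i}|$ in terms of $(p,q)$, your Step 3 has no well-defined finite set to check.

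\emph{The Newton-polygon alternative.} It meets a concrete obstruction. The point $\tau=\frac{-1+\sqrt{3}\mathrm{i}}{2}$ gives $\mu(\mathbf{m}_0\mathbf{l}_0^2)=1$, so it solves the $(p,2p)$ filling equation for \emph{every} $p$. Hence the filling polynomial has the factor $\tau^2+\tau+1$ for infinitely many slopes. No count of factor degrees can by itself show the geometric root lies elsewhere.

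\emph{Orbifold fillings.} Step 1 restricts to coprime $(p,q)$, whereas the statement covers orbifold fillings, including the whole $(p,2p)$ and $(2q,q)$ families.

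\textbf{What the paper does instead.} The missing idea is an exact integrality statement valid for all slopes at once.
\begin{enumerate}
\item The paper first reduces to non-negative $(p,q)$, using the cusp-exchanging isometry and the order-$4$ symmetry of the square cusp.
\item It parametrizes $\mathcal{U}_0(N)$ rationally by the cusp modulus, $(\sigma_1,\sigma_2)=(\tau^2+\tau+1-\tau^{-1},\tau^2)$.
\item It pulls the filling polynomial back to $h'_{(p,q)}\in\Z[\tau]$, whose leading and constant coefficients are $\pm1$ whenever $p\neq 2q$ (Proposition \ref{pret_algint}). So $\tau$ is a unit for every such slope.
\item Fact \ref{quad_unit} then leaves only $\tau\in\{\mathrm{i},\frac{\pm1+\sqrt{3}\mathrm{i}}{2}\}$, each checked directly. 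The value $\mathrm{i}$ is the complete structure, $\frac{1+\sqrt{3}\mathrm{i}}{2}$ fits no non-negative slope, and $\frac{-1+\sqrt{3}\mathrm{i}}{2}$ forces $q=2p$.
\item The families $(p,2p)$ and $(2q,q)$ are where the unit argument is insufficient or unavailable. There $\mu(\mathbf{m}_0\mathbf{l}_0^2)$, respectively $\mu(\mathbf{m}_0^2\mathbf{l}_0)$, is a primitive root of unity. This lets the paper write $2\cos(k\pi/n)$ (with $n=p$, respectively $n=q$) as an explicit rational function of $\tau$.
\item An imaginary quadratic $\Q(\tau)$ then forces $\cos(k\pi/n)\in\Q$, so $n\le 3$.
\item Irreducibility of a few explicit polynomials, plus non-hyperbolicity of the $(2,1)$ filling, leaves only $(1,2)$, i.e.\ \texttt{m003}.
\end{enumerate}
No asymptotics and no computer search over slopes are needed.
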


Our motivation stems from the work of \cite{NeumReid} and \cite{CDHMMW} where they studied and classified the arithmetic one-cusped Dehn fillings of the Whitehead link and the $6^2_2$ link complements respectively. Our technique here is built upon the approach taken in these two papers using a number of additional subtle observations and analysis of more complex scenarios. 

The geometric data that cusp moduli captures in the results pertaining to our second direction strongly provokes a closer study of the relation of the cusp field with other closely related fields associated with our one-cusped Dehn fillings. With this in mind, we look at these one-cusped Dehn fillings from a third direction\textemdash we correlate each of their cusp field, invariant trace field and trace field. It should be noted that the first two are in fact are invariants of the commensurability class (\cite{NeumReid}, \cite{ReidTrace}). While we always have the relation 
$$\text{cusp field }\subset\text{ invariant trace field }\subset\text{ trace field}$$ 
between these number fields (see \cite[Theorem 3.1.2]{MacRe}) and \cite[Proposition 2.7]{NeumReid}), these fields are not always equal. Here, we prove the following two results. 
\newcommand\bergecusptrace{For any one-cusped hyperbolic orbifold $M_{(p,q)}$ obtained by $(p,q)$-Dehn filling on a cusp of the Berge manifold $M$, the trace field, the invariant trace field and the cusp field of $M_{(p,q)}$ coincide.}
\theoremstyle{plain}
\newtheorem*{Bergefieldthm}{Theorem \ref{Bcusptrace}}
\begin{Bergefieldthm}\bergecusptrace\end{Bergefieldthm}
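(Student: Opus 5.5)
Since the cusp field is always contained in the invariant trace field, which is contained in the trace field, it suffices to show that the trace field of $M_{(p,q)}$ is contained in its cusp field. I will work with explicit parabolic representations. First I fix an ideal triangulation of the Berge manifold $M$ (e.g.\ the SnapPea triangulation of \texttt{m125}'s sister, with four tetrahedra). From it I write down a presentation of $\pi_1(M)$ with two generators and one relator. I also fix meridian--longitude pairs $(\mathbf{m}_0,\mathbf{l}_0)$ on the cusp to be filled and $(\mathbf{m}_1,\mathbf{l}_1)$ on the cusp that stays complete. The hyperbolic structure on $M_{(p,q)}$ is then a point on the deformation variety: a discrete faithful representation $\rho_{(p,q)}\colon \pi_1(M)\to \mathrm{PSL}_2(\C)$ with $\rho(\mathbf{m}_0)^p\rho(\mathbf{l}_0)^q=1$ and $\rho(\mathbf{m}_1),\rho(\mathbf{l}_1)$ parabolic. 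I conjugate so that the two generators are
\[
\rho(a)=\begin{pmatrix}1&1\\0&1\end{pmatrix},\qquad \rho(b)=\begin{pmatrix}x&y\\ z&w\end{pmatrix},
\]
where $a$ is a peripheral element of the unfilled cusp.

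The key step is to express the matrix entries in terms of the cusp shape. After the normalization, the entries of $\rho(b)$ are rational functions of a small number of parameters satisfying polynomial relations coming from the relator. The remaining cusp $c_1$ stays parabolic, so its fixed point can be put at $\infty$. The cusp shape is then $\tau=\rho(\mathbf{l}_1)/\rho(\mathbf{m}_1)$, read off as a ratio of translation lengths, and it lies in the cusp field $\Q(\tau)$. I will solve the gluing and relator equations to show that the parameters, and hence all entries of $\rho(a),\rho(b)$, are rational functions of $\tau$ over $\Q$; possibly an auxiliary square root such as $\sqrt{-3}$ appears, and it must also be absorbed. Once this is done, the image of $\rho$ lies in $\mathrm{PSL}_2(\Q(\tau))$, so the trace field $\Q(\operatorname{tr}\rho)$ is contained in $\Q(\tau)$ and equality follows.

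Since Dehn filling only imposes the extra relation $\rho(\mathbf{m}_0)^p\rho(\mathbf{l}_0)^q=1$, the parametrization of the remaining representation variety by $\tau$ holds uniformly for all $(p,q)$ at which $M_{(p,q)}$ is hyperbolic. Orbifold fillings with $\gcd(p,q)\neq 1$ are handled the same way: the representation still factors through the orbifold group, and its image is generated by the images of $a$ and $b$. I must also check that the trace field computed from this particular generating set equals the trace field of the orbifold group. This holds because $\rho(\pi_1(M))=\rho(\pi_1^{orb}(M_{(p,q)}))$.

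The main obstacle is the elimination step that shows the parameter variety is birational to the $\tau$-line over $\Q$. I must show that the map from the component of the deformation variety containing the complete structure to its cusp-shape coordinate has a rational inverse, not merely a finite-degree one. A degree greater than one would allow the trace field to be a proper extension of the cusp field. I would first compute a Gröbner basis or resultant of the relator equations with $\tau$ as a parameter, hoping it yields a linear expression for each entry. If instead a quadratic over $\Q(\tau)$ appears, I would look for a symmetry of $M$ (the Berge manifold has an involution exchanging structures) or a different choice of the parabolic normalization that makes the discriminant a square in $\Q(\tau)$. Finally, I need to verify that the finitely many points where the rational expressions degenerate (poles or vanishing denominators) correspond only to the six exceptional slopes classified in \cite{MartPet}.
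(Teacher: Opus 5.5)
Your reduction is the right one: cusp field $\subseteq$ invariant trace field $\subseteq$ trace field, so it suffices to put the trace field inside $\Q(\tau)$, and this is how the paper finishes. \textbf{The gap:} the step that carries all the content is never established. That step is showing that, on the curve of structures with the unfilled cusp complete, the traces (equivalently, after a parabolic normalization, the matrix entries) are rational functions of $\tau$ over $\Q$. You defer it to a Gr\"obner/resultant computation you have not carried out, and you list contingencies in case it fails, so as written this is a plan rather than a proof. Moreover, the obstruction you fear genuinely occurs if ``deformation variety'' means shape parameters. On $\mathcal{V}_0(M)$ the holonomy equation of the unknotted cusp gives $z_0=z_1$, and the edge equations then force $z_2+z_3=-z_1^2+2z_1+1-2z_1^{-1}+z_1^{-2}$ and $z_2z_3=(z_1-1)^2z_1^{-2}$. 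The discriminant of $t^2-(z_2+z_3)t+z_2z_3$ contains the factor $z_1^2-z_1+1$ exactly once, so it has a simple zero at the complete structure $z_1=\frac{1+\sqrt3\mathrm{i}}{2}$. Hence it is not a square in $\C(z_1)$; at $z_1=\mathrm{i}$, for instance, $z_2,z_3=\frac{1+(4\pm\sqrt{15})\mathrm{i}}{2}$. Since $\tau=\zeta_2(z_1)$, the shape curve through the complete structure is an irreducible double cover of the $\tau$-line, branched there. So $z_2,z_3$ are not rational in $\tau$, and no re-normalization or symmetry will make that curve birational to the $\tau$-line.

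\textbf{The missing idea:} the covering involution is $z_2\leftrightarrow z_3$, which inverts the holonomy derivatives $\mu(\mathbf{m}_1),\mu(\mathbf{l}_1)$ of the filled cusp. The trace functions are invariant under it, so they descend to the quotient curve. The paper implements exactly this. It passes to the symmetric functions $\Phi=(\sigma_1,\sigma_2,\sigma_1',\sigma_2')$, whose image $\mathcal{U}_0(M)$ is parametrized by $z_1$ via $\Psi$, and notes $\Q(\tau)=\Q(\zeta_2(z_1))=\Q(z_1)$. It then applies the Poincar\'e polyhedron theorem to a decahedron assembled from the four tetrahedra, getting generators $a,e$ with $\operatorname{tr}a=\frac{1}{z_1-1}$, $\operatorname{tr}e=z_1$ and $\operatorname{tr}ae=\frac{2z_1-1}{z_1(z_1-1)}$. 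These are symmetric in $z_2,z_3$ and visibly lie in $\Q(z_1)$. At the level of characters the birationality you want is true, but you have to exhibit it by a computation of this kind.

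Smaller issues:
\begin{itemize}
\item You assume without justification a two-generator presentation with one generator peripheral to the unfilled cusp. The paper's face-pairing generators are loxodromic at the complete structure, so you would have to normalize some parabolic separately and still show the actual generators are defined over $\Q(\tau)$.
\item The theorem covers filling either cusp, so you need the cusp-exchanging isometry of Fact \ref{sym_fact} or two separate computations.
\item The Berge manifold is \texttt{m202}; \texttt{m125} is the $(-2,3,8)$-pretzel link complement.
\item $\rho_{(p,q)}$ is faithful only on $\pi_1^{orb}(M_{(p,q)})$, not on $\pi_1(M)$.
\item The claim that the parametrization degenerates only at the six exceptional slopes is neither justified nor needed.
\end{itemize}
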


\newcommand\pretzelcusptrace{For any one-cusped hyperbolic orbifold $N_{(p,q)}$ obtained by $(p,q)$-Dehn filling on a cusp of the $(-2,3,8)$-pretzel link complement, the trace field, the invariant trace field and the cusp field of $N_{(p,q)}$ coincide. }
\theoremstyle{plain}
\newtheorem*{Pretzelfieldthm}{Theorem \ref{tracecusp_pret}}
\begin{Pretzelfieldthm}\pretzelcusptrace\end{Pretzelfieldthm}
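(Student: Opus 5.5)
Since the cusp field always lies in the invariant trace field, which lies in the trace field, it is enough to show that the trace field of $N_{(p,q)}$ is contained in its cusp field. I would follow the strategy of \cite{NeumReid} and \cite{CDHMMW}.

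First, I would fix an ideal triangulation of the $(-2,3,8)$-pretzel link complement $N$. I would use the four-tetrahedron triangulation coming from its complexity-$4$ description. Then I would write Thurston's gluing equations with shape parameters $z_1,\dots,z_4$. For the $(p,q)$-Dehn filling on one cusp, I would impose $p\log H(\mathbf{m}_0)+q\log H(\mathbf{l}_0)=2\pi i$ and keep the other cusp complete. The aim is to eliminate variables so that every shape is a rational function, with rational coefficients, of a single quantity attached to the unfilled cusp. The natural choice is the cusp shape $\tau$ of the complete cusp, or a closely related parameter $z$ such that $\tau$ is a rational function of $z$ and conversely. By \cite{NeumReid}, the cusp field is $\Q(\tau)$, and the trace field is generated by the shapes, since the holonomy can be computed from the tetrahedral parameters. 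So if each $z_i$ lies in $\Q(\tau)$, the trace field is contained in $\Q(\tau)$ and all three fields coincide.

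Concretely, in the second step I would solve the edge equations and the completeness equation for the unfilled cusp $c_1'$ explicitly. I expect the solution set to be a curve, namely the deformation variety of the one-cusp-filled family. I would parametrize it rationally by one variable $w$ and write down $\tau(w)$ and the holonomies $H(\mathbf{m}_0)(w)$, $H(\mathbf{l}_0)(w)$ as explicit rational functions. This is the same mechanism as for the Whitehead link, where $\tau$ is essentially a Möbius function of the single shape.

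The third step, which I expect to be the main obstacle, is to invert $\tau$: I need to show $w\in\Q(\tau)$. If $\tau(w)$ has degree $1$ this is immediate. If it has higher degree, say $2$, then $w$ generates an extension of $\Q(\tau)$ of degree at most $2$. In that case I would use the filling equation itself, or the symmetric role of $w$ and its Galois conjugate, to rule out a genuine degree-$2$ extension. One possibility is that the conjugate root corresponds to a different, non-geometric solution or to a complex conjugate. Failing that, I would pick a different parametrization adapted to the symmetry of $N$ so that the map to $\tau$ becomes birational. There is also a subtlety for orbifold fillings, where $(p,q)$ is not coprime. Here I would note that the filling equation only constrains $w$ and never enters the field generation, so the argument is uniform in $(p,q)$.

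Finally, I would check that the trace field equals $\Q(\text{shapes})$ for the filled orbifold. The generators of $\pi_1^{\mathrm{orb}}$ are products of the developing-map matrices, whose entries lie in $\Q(z_i)$ after conjugation, which gives the containment in that direction. Combined with the previous step, this proves the theorem.
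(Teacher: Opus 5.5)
There is a genuine gap. Your reduction depends on the chain $\text{trace field}\subseteq\Q(z_0,z_1,z_2,z_3)\subseteq\Q(\tau)$, and for this triangulation the second containment is false. So your third step cannot be completed by any of the routes you list.

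On $\mathcal{V}_0(N)$ the shapes $z_2,z_3$ are rational in $z_0,z_1$, but $\tau=\frac{1-z_0z_1}{(1-z_0)(1-z_1)}$ is symmetric in $z_0,z_1$. In fact $z_0,z_1$ are the two roots of $x^2-\sigma_1x+\sigma_2$, where $\sigma_1=\tau^2+\tau+1-\tau^{-1}$ and $\sigma_2=\tau^2$. The discriminant is $\sigma_1^2-4\sigma_2=\tau^{-2}(\tau^4-1)(\tau^2+2\tau-1)$, which is not a square in $\C(\tau)$. Hence $\mathcal{V}_0(N)$ is a genus-two double cover of the $\tau$-line, with deck involution $z_0\leftrightarrow z_1$, $z_2\leftrightarrow z_3$. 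No reparametrization can make $\tau$ birational on it. Moreover, the conjugate of $z_0$ over $\Q(\tau)$ is $z_1$, a shape of the very same structure, so it cannot be discarded as non-geometric or as a complex conjugate.

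The degree-two extension really occurs. For the $(1,2)$-filling (\texttt{m003}) one has $\tau=\frac{-1+\sqrt{3}\mathrm{i}}{2}$ and $(z_0-z_1)^2=\tau^4-4\tau^2=\frac{3+5\sqrt{3}\mathrm{i}}{2}$. Its norm to $\Q$ is $21$, not a rational square, so $z_0\notin\Q(\tau)$, even though the trace field is $\Q(\tau)$. For incomplete structures the shapes only give an upper bound for the trace field; your phrase ``the trace field is generated by the shapes'' holds only as a containment. Here that bound is strictly too large.

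The missing idea is to bound the trace field directly by quantities symmetric in $z_0,z_1$. That means working on the quotient curve $\mathcal{U}_0(N)=\phi(\mathcal{V}_0(N))$ rather than on $\mathcal{V}_0(N)$. The paper first shows (Lemma \ref{pret_birational}) that $\tau$ is birational on $\mathcal{U}_0(N)$, so $\Q(\sigma_1,\sigma_2)=\Q(\tau)$. It then assembles the four tetrahedra into an ideal octahedron (Figure \ref{octpret}) and uses Poincar\'e's theorem to get a two-generator group $\langle c,d\rangle$ with explicit matrices. It computes $\operatorname{tr}c=-\frac{2-\sigma_1}{1-\sigma_1+\sigma_2}$, $\operatorname{tr}d=-\frac{\sigma_1-2\sigma_2}{\tau(1-\sigma_1+\sigma_2)}$ and $\operatorname{tr}(cd)=\tau$. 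The trace field of a two-generator group is $\Q(\operatorname{tr}c,\operatorname{tr}d,\operatorname{tr}cd)$, so here it equals $\Q(\tau)$, and the sandwich with the cusp field finishes the proof. You should also invoke the cusp-exchanging isometry (Fact \ref{sym_fact}) to justify treating only one cusp.
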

These two theorems are important for their relation with the Neumann and Reid question \cite[Question 2, \S 10]{NeumReid} asking which hyperbolic knot complements have same the trace field and the cusp field. The two theorems above show that the knot complements obtained by hyperbolic Dehn filling the unknotted component of either of the Berge manifold or the $(-2,3,8)$-pretzel link complement belong to this list. These include families of Berge knots and twisted torus knots as observed in \cite{Hoffman_3comm} and \cite{HMPT} respectively. Similar theorems have been proven for the one-cusped Dehn fillings of the Whitehead link in \cite{NeumReid} and of the $6^2_2$ link in \cite{CDHMMW}. 


Our proof methods exploit triangulations of the $(-2,3,8)$-pretzel link complement and the Berge manifold and the pertinent number theoretic properties of their associated algebraic varieties as has been done for Whitehead link in \cite{NeumReid} and for $6^2_2$ link in \cite{CDHMMW}. We should point out that the complements of the Whitehead link and the $(-2,3,8)$-pretzel link are both octahedral: they are obtained by gluing the faces of a regular ideal octahedron and hence have the volume of a regular ideal octahedron. On the other hand, the Berge manifold and the $6^2_2$-link complement are tetrahedral: they each are made of four regular ideal tetrahedra and consequently have volume $4v_0$, where $v_0$ is the volume of a regular ideal tetrahedron.

To prove Theorem \ref{berge_arith_thm} and \ref{pretzel_arith_thm}, we use the characterization of non-compact arithmetic Kleinian group of finite co-volume in \cite[Proposition 4.4]{NeumReid}, which together with \cite[Proposition 2.7]{NeumReid} would require the cusp field of a one-cusped arithmetic hyperbolic orbifold to be quadratic imaginary. We utilize the ideal triangulations of our two manifolds and the fact that their one-cusped hyperbolic Dehn fillings can be realized as points in a co-dimension one subvariety $\mathcal{V}_0$ of the gluing variety $\mathcal{V}$ associated with these triangulations. It is important to bear in mind that \cite[Proposition 1.6]{CDM} implies that the cusp field of such a one-cusped Dehn filling is generated over $\mathbb{Q}$ by the value of a rational function on $\mathcal{V}$ at the corresponding point in $\mathcal{V}_0$. 

We point out that both the $(-2,3,8)$-pretzel link complement and the Berge manifold have non-trivial self-isometries, which can be checked via SnapPy \cite{snappy}. This helps us conclude that analyzing just the one-cusped $(p,q)$-Dehn fillings, where both $p$ and $q$ are non-negative integers, is enough. We first analyze such one-cusped Dehn fillings for which the computed cusp moduli is a unit non-rational algebraic integer. For the Berge manifold, all of them satisfy this (Proposition \ref{Balgint}) and for the $(-2,3,8)$-pretzel link complement it is true when the filling coefficients satisfy $p\ne 2q$ (Proposition \ref{pret_algint}). We show that the cusp fields of the $(2q,q)$ one-cusped fillings of the $(-2,3,8)$-pretzel link complement are not quadratic imaginary. As observed in \cite{CDHMMW} (recorded as Fact \ref{quad_unit} in Section \ref{onecusp622}), there are only three quadratic imaginary number fields which have a unit non-rational algebraic integer. We argue that none of these could be the cusp fields of the respective one-cusped Dehn fillings of the Berge manifold with unit algebraic integral cusp moduli and the sister \texttt{m003} of the figure eight knot complement is the only one-cusped Dehn filling of the $(-2,3,8)$-pretzel link complement with such a cusp field. 



Theorem \ref{hidd_sym_effect} can derived as a consequence of \cite[Proposition 9.1]{NeumReid} and the fact that the sister of the figure eight knot complement is the only one amongst the considered one-cusped hyperbolic Dehn fillings which has a quadratic imaginary cusp field (proof of Theorem \ref{berge_arith_thm} and \ref{pretzel_arith_thm}). This method of using degree bound for the cusp field gives us stronger control on understanding which one-cusped Dehn fillings can cover rigid cusped orbifolds\footnote{in relation with \cite[Proposition 9.1]{NeumReid}} than the argument presented the proof of \cite[Theorem 1.1]{Hoffman_3comm}, which involves the limiting behavior of the sequence of the orbifold fundamental groups of these fillings and hence does not give us an effective conclusion for such fillings of the Berge manifold. On the other hand, proof of \cite[Theorem 6.1]{Hoffman_hidden} used degrees of covers to one-rigid cusped orbifolds and is only applicable to one-cusped manifold fillings of the Berge manifold, and both of \cite[Example 5.2 and 5.3]{CDM} used the non-geometric isolation of cusps, which too only capture the limiting behavior of the one-cusped fillings of both of our manifolds and fail to be effective. 

The proof techniques for Theorem \ref{Bcusptrace} and \ref{tracecusp_pret} use Poincar\'e polyhedron theorem as in \cite[Theorem 6.2]{NeumReid} and \cite[Proposition 6.16]{CDHMMW}. They are also reliant on the data provided by the ideal triangulations of the two manifolds.


To make the paper self-contained, we devote Section \ref{background} and \ref{Whitehead and 622 review} of this paper for literature review. In Section \ref{background}, we recall the background on notions associated with cusps, ideal triangulation, Dehn filling, and gluing variety. In Section \ref{Whitehead and 622 review}, we review the one-cusped Dehn fillings of the Whitehead link complement and the $6^2_2$-link complement from \cite{NeumReid} and \cite{CDHMMW} respectively. 

From Section \ref{pretzel berge gluing} onwards, we develop and derive the tools and findings to establish the main results of this paper. In Section \ref{pretzel berge gluing}, we compute the gluing variety $\mathcal{V}$ and its co-dimension one subvariety $\mathcal{V}_0$ for both the Berge manifold and the $(-2,3,8)$-pretzel link complement. In Section \ref{Berge V0 parametrization}, we describe a one variable parametrization of $\mathcal{V}_0$ for the Berge manifold and present a proof of Theorem \ref{Bcusptrace}. Section \ref{arithmetic berge} consists of analysis of the one-cusped Dehn fillings of the Berge manifold and a proof of Theorem \ref{berge_arith_thm}. In Section \ref{pretzel V0 parametrization}, we show that $\mathcal{V}_0$ for the $(-2,3,8)$-pretzel link complement can be parametrized by cusp parameter $\tau$ and prove Theorem \ref{tracecusp_pret}. Section \ref{pretzel arithmetic} is devoted to study the one-cusped Dehn fillings of the $(-2,3,8)$-pretzel link complement and prove Theorem \ref{pretzel_arith_thm}. In Section \ref{hidd_symm}, we lay out the argument for Theorem \ref{hidd_sym_effect}. 
\subsection*{Acknowledgements}
The author would like to thank Jason DeBlois and Neil Hoffman for providing many helpful feedback and comments on earlier drafts of the article. The author was partially supported by Israeli Science Foundation grant 823/23 (through the Postdoctoral fellowship at Ben Gurion University). The author also acknowledges the supporting work environment in the math department of Rutgers University-New Brunswick where some of the work was completed. 

\section{Background}\label{background}
In this section, we recall the necessary background materials from the literature and set up the basic notations that we will use throughout the paper. 
\subsection{Cusps of hyperbolic $3$-orbifolds}
Unless otherwise specified, by a hyperbolic $3$-orbifold, we will mean an orbifold $\mathbb{H}^3/\Gamma$ where $\Gamma$ is a discrete subgroup of $\operatorname{Isom}^{+}(\mathbb{H}^3)=\operatorname{PSL}_{2}(\C)$ of finite co-volume. In particular, $\mathbb{H}^3/\Gamma$ whose (orbifold) fundamental group is $\Gamma$ inherits a complete hyperbolic metric from $\mathbb{H}^3$. Denote the hyperbolic $3$-orbifold $\mathbb{H}^3/\Gamma$ by $M_{\Gamma}$. The $\Gamma$-orbits of the parabolic fixed points of $\Gamma$ on the extended complex plane $\widehat{\C}$ are referred to as the \textit{cusps} of $M_{\Gamma}$. For a parabolic fixed point $\mathbf{x}$ of $\Gamma$, let $c=\Gamma \mathbf{x}$ denote the corresponding cusp of $M_{\Gamma}$. Let $\Lambda_{\mathbf{x}}$ be the stabilizer of $\mathbf{x}$ in $\Gamma$. If $B_{\mathbf{x}}$ is a horoball centered at $\mathbf{x}$, then $\Lambda_{\mathbf{x}}$ acts as an oriented wallpaper group of Euclidean isometries on the horosphere $\partial B_{\mathbf{x}}\cong \R^2$. It follows from \cite[Corollary 2.2]{DuMe} that $B_{\mathbf{x}}/\Lambda_{\mathbf{x}}$ embeds into $M_{\Gamma}$ and is referred to as a \textit{cusp neighborhood} of cusp $c$ (also see \cite[Corollary 5.10.2]{Th_notes}). We say that $\partial B_{\mathbf{x}}/\Lambda_{\mathbf{x}}$ is a \textit{cusp cross-section} of cusp $c$. This cusp cross-section is one of the five oriented Euclidean $2$-orbifolds: $\mathbb{T}^2$, $\mathbb{S}^2(2,2,2,2)$, $\mathbb{S}^2(2,3,6)$, $\mathbb{S}^2(3,3,3)$, $\mathbb{S}^2(2,4,4)$. When $\Lambda_{\mathbf{x}}$ is $\mathbb{Z}^2$, this cusp cross-section is a Euclidean torus. In this case, we say that $c$ is a torus cusp.  When $\Gamma$ does not have any torsion elements, $M_{\Gamma}$ is a manifold and all of its cusps are torus cusps. 

Assume that $c$ is a torus cusp. If $(\mathbf{m}, \mathbf{l})$ is a generating pair for $\Lambda_{\mathbf{x}}$, then we say that $(\mathbf{m}, \mathbf{l})$ is an \textit{oriented pair} if the determinant of the $2\times2$ matrix $\begin{pmatrix} \mathbf{m} \quad \mathbf{l} \end{pmatrix}$ is positive. For an oriented pair of generators $(\mathbf{m}, \mathbf{l})$ for $\Lambda_{\mathbf{x}}$, the complex number $\frac{\mathbf{l}}{\mathbf{m}}$ belongs to the upper-half plane and is referred to as a \textit{cusp moduli} of cusp $c$. It is easy to see that $\operatorname{PSL}_2(\mathbb{Z})$ acts transitively on the set of cusp moduli of cusp $c$ and consequently, the field $\Q(\frac{\mathbf{l}}{\mathbf{m}})$ is independent of the choice of the oriented generating pair $(\mathbf{m}, \mathbf{l})$. This field is called the \textit{cusp field} of cusp $c$. 

\subsection{Ideal triangulation and associated algebraic varieties}
In this section, we will review the notions of ideal triangulation, gluing variety and the completeness equation. Contents of this section stem from the work of \cite[Chapter 4]{Th_notes} and \cite[Section 2]{NZ}. Other good references for these contents are \cite[Chapter E]{BePe} and \cite[Chapter 10]{Ratcliffe}. 
\subsubsection{Ideal triangulation and gluing variety}
An \textit{ideal tetrahedron} $T$ in $\mathbb{H}^3$ is a hyperbolic tetrahedron in $\mathbb{H}^3 \cup \partial_{\infty} \mathbb{H}^3$ such that only the vertices of $T$ are in $\partial_{\infty} \mathbb{H}^3=\widehat{\C}$. If these vertices, referred to as the \textit{ideal vertices} of $T$, are $\mathbf{v_0}$, $\mathbf{v_1}$, $\mathbf{v_2}$ and $\mathbf{v_3}$, such that the cross-ratio $\mathbf{z}=[\mathbf{v_0}, \mathbf{v_1}, \mathbf{v_2}, \mathbf{v_3}]=\frac{\mathbf{v_3}-\mathbf{v_0}}{\mathbf{v_2}-\mathbf{v_0}} \frac{\mathbf{v_2}-\mathbf{v_1}}{\mathbf{v_3}-\mathbf{v_1}}$ is in the upper half plane, then there exists an isometry $\phi$ of $\mathbb{H}^3$ which sends $\mathbf{v_0}$ to $0$, $\mathbf{v_1}$ to $\infty$, $\mathbf{v_2}$ to $1$, and $\mathbf{v_3}$ to $\mathbf{z}$. This $\mathbf{z}$ is called the \textit{shape parameter} of $T$ corresponding to the edge joining $\mathbf{v_0}$ and $\mathbf{v_1}$. Note that the shape parameter of the edge joining $\mathbf{v_2}$ and $\mathbf{v_1}$ (respectively, $\mathbf{v_3}$ and $\mathbf{v_1}$) is $\frac{1}{1-\mathbf{z}}$ (respectively, $\frac{\mathbf{z}-1}{\mathbf{z}}$). One can also check that the shape parameters of $T$ corresponding to two opposite edges (i.e., edges that do not share an ideal vertex) are equal. 
\begin{remark}
In rest of the paper, we will use the notations $\zeta_1(\mathbf{z})$ and $\zeta_2(\mathbf{z})$ from \cite{CDM} to denote $\frac{1}{1-\mathbf{z}}$ and $\frac{\mathbf{z}-1}{\mathbf{z}}$ respectively. Note, all of $\mathbf{z}$, $\zeta_1(\mathbf{z})$ and $\zeta_2(\mathbf{z})$ are in the upper-half plane. In particular, we do not allow \textit{flat tetrahedron} in our definition. So, the shape parameters are never $0$ or $1$. 
\end{remark}

An \textit{ideal triangulation} of a hyperbolic $3$-manifold with cusps $M_{\Gamma}=\mathbb{H}^3/\Gamma$ is a pair $(\mathcal{T}, \mathcal{F})$ where $\mathcal{T}$ is a set of $n$-distinct ideal tetrahedra $T_0, \dots, T_{n-1}$ and $\mathcal{F}$ is a set of $2n$ pairs of isometries of $\mathbb{H}^3$ satisfying the following:
\begin{enumerate}
\item for each $(f,g)\in \mathcal{F}$, $g=f^{-1}$,
\item for each $(f,g)$, there exists a unique set of two distinct faces $\{F_{fg}, F_{gf}\}$ from these tetrahedra in $\mathcal{T}$ and vice versa such that $f(F_{fg})=F_{gf}$ and $g(F_{gf})=F_{fg}$, (the faces $F_{fg}$ and $F_{gf}$ can belong to two distinct tetrahedra from $\mathcal{T}$),
\item for each $(f,g)\in \mathcal{F}$, if $T_{fg}$ and $T_{gf}$ are the tetrahedra containing respectively $F_{fg}$ and $F_{gf}$, then, $f(T_{fg})\cap T_{gf}=F_{gf}$ and $g(T_{gf})\cap T_{fg}=F_{fg}$, and, 
\item if $\rho_{\mathcal{F}}$ is the equivalence relation on $\bigsqcup\limits_{i=0}^{n-1} T_i$ induced by $\mathcal{F}$, then $M_\Gamma$ is homeomorphic to the quotient space $\bigsqcup\limits_{i=0}^{n-1} T_i/\rho_{\mathcal{F}}$. 
\end{enumerate}
See \cite[Section 10.1]{Ratcliffe} and \cite[Section E.5-i,-ii]{BePe} for more details. An ideal triangulation $(\mathcal{T}, \mathcal{F})$ on $M_{\Gamma}$ also induces a simplicial structure on $M_{\Gamma}$ with simplices of dimension $1$, $2$ and $3$, referred to as respectively edges, faces and tetrahedra of $M_{\Gamma}$. Note that $M_{\Gamma}$ will have $n$ edges, $2n$ faces and $n$ tetrahedra. An edge $E$ of $M_{\Gamma}$ is an equivalence class of edges of $\mathcal{T}$. Since $M_{\Gamma}$ is hyperbolic, the total angle around an edge $E$ of $M_{\Gamma}$ should add up to $2\pi$ (see \cite[Equation 4.2.2]{Th_notes}) and thus giving the following \textit{edge equation} for $E$ (see \cite[Equation 4.2.1]{Th_notes}): 
\begin{equation*}
\prod_{e \in E} \mathbf{z}_{e}=1, \text{ where $\mathbf{z}_{e}$ is the shape parameter corresponding to edge $e$ from $\mathcal{T}$}.
\end{equation*}
So, the ideal triangulation $(\mathcal{T}, \mathcal{F})$ on $M_{\Gamma}$ gives us $n$ edge equations. For each $i \in \{0, \dots, n-1\}$, fix an edge for $T_i$ and denote its shape parameter by $\mathbf{z_i}$. Since the other shape parameters of $T_i$ are M\"obius transformations of $\mathbf{z_i}$, each edge equation will be equivalent to a polynomial equation in $\mathbf{z_0}, \dots, \mathbf{z_{n-1}}$. Replacing $\mathbf{z_i}$ by variable $z_i$ for each $i \in \{0, \dots, n-1\}$, we get $n$ such polynomial equations in $n$ variables $z_0, \dots, z_{n-1}$. The algebraic set consisting of all $n$-tuples $(z_0, z_1, \dots, z_{n-1})$ in $(\mathbb{C}-\{0,1\})^n$ which satisfy these polynomial equations is called the \textit{gluing variety} associated with $(\mathcal{T}, \mathcal{F})$. We will denote this gluing variety by $\mathcal{V}(M_{\Gamma})$. 

\subsubsection{Cusp triangulation and holonomy equations}
Consider $M_{\Gamma}$ and its ideal triangulation $(\mathcal{T}, \mathcal{F})$ as in the previous subsection. Let $c_0, c_1, \dots, c_{k-1}$ be the cusps of $M_{\Gamma}$. Denote a torus cusp cross-section for cusp $c_j$ by $\mathbb{T}_j$. Then $(\mathcal{T}, \mathcal{F})$ induces a triangulation of $\mathbb{T}_j$, referred to as a \textit{cusp triangulation} of $\mathbb{T}_j$. The triangles in this cusp triangulation of $\mathbb{T}_j$ are cross-sections of the tetrahedra in $\mathcal{T}$ by horospheres centered at the ideal vertices of those tetrahedra.

We will now closely follow \cite[Section 2]{NZ} to define the notion of holonomy derivative and holonomy equation.  Suppose an element in the first homology of $\mathbb{T}_j$ is represented by a simplicial curve $\gamma$. Then $\gamma$ consists of finite number of $1$-simplices. We refer to this number as the \textit{simplicial length} of $\gamma$ and denote it by  $l(\gamma)$. One defines 
$$\mu([\gamma])=(-1)^{l(\gamma)} \prod_{e\in E_{\gamma}} \mathbf{z}_{e},$$
where $E_{\gamma}$ is the set of edges from $\mathcal{T}$ which $\gamma$ meets from the right. It was shown in \cite[Lemma 2.1]{NZ} that this map $\mu: \operatorname{H}_1(\mathbb{T}_j)\to \C^{\ast}$ is well-defined and a group homomorphism. $\mu([\gamma])$ is referred to as the \textit{holonomy derivative} of $[\gamma]$. 

Since $M_{\Gamma}$ is complete, all the torus cusp cross-sections of $M_{\Gamma}$ have Euclidean structures (see \cite[Proposition E.6.5]{BePe}). This implies that for all cusps of $M_{\Gamma}$, all the above mentioned holonomy derivatives are $1$. In particular, if $(\mathbf{m_j}, \mathbf{l_j})$ is an oriented generating pair for $\operatorname{H}_1(\mathbb{T}_j)$, then 
\begin{equation}\label{holo_j}
\mu(\mathbf{m_j})=\mu(\mathbf{l_j})=1.
\end{equation} 
(See \cite[Proposition E.6.12]{BePe}). Equation \ref{holo_j} is referred to as a \textit{holonomy equation} for cusp $c_j$. As in the case of edge equations, each of the $k$ holonomy equations for $M_{\Gamma}$ can be turned into a polynomial equation in $z_0, z_1, \dots, z_{n-1}$. In this case, the set of points in the gluing variety which further satisfy all the holonomy polynomial equations consists of isolated points (see \cite[Proposition E.6.16]{BePe}) and contain the actual $n$-tuple of shape parameters $(\mathbf{z_0}, \dots, \mathbf{z_{n-1}})$ for the tetrahedra $T_0, \dots, T_{n-1}$. We will refer to the point  $(\mathbf{z_0}, \dots, \mathbf{z_{n-1}})$ in the gluing variety as the \textit{complete structure}. 
\subsubsection{Dehn fillings and incomplete structures}
Choose an oriented generating pair $(\mathbf{m_j}, \mathbf{l_j})$ of the first integral homology $\operatorname{H}_1(\mathbb{T}_j)$ of the torus cusp cross-section for cusp $c_j$. Let $N_j$ be a cusp neighborhood of cusp $c_j$ cut off by $\mathbb{T}_j$ (i.e. $N_j \cong \mathbb{T}^2\times [0, \infty)$). We recall that for co-prime integers $p$ and $q$, the $(p,q)$-\textit{Dehn filling} of $c_j$ with respect to $(\mathbf{m_j}, \mathbf{l_j})$ is the manifold 
$$\left(M_{\Gamma}-int(N_j)\right)\cup_{h} \left(\mathbb{D}^2\times \mathbb{S}^1\right)$$
where $h$ is a self-homeomorphism of the torus that sends its meridian to the curve $p\mathbf{m_j}+q\mathbf{l_j}$ of $\partial N_j=\mathbb{T}_j$. Denote this $(p,q)$-Dehn filling with respect to $(\mathbf{m_j}, \mathbf{l_j})$ as $M_{\Gamma, (p,q)}$. One can also talk about $(p,q)$-Dehn filing when $p$ and $q$ are not co-prime, i.e, $gcd(p,q)=d>1$. In this case, the process is similar but instead of attaching a solid tours $\mathbb{D}^2\times \mathbb{S}^1$, we attach a (solid) orbi-torus $\left(\mathbb{D}^2\times \mathbb{S}^1\right)_d$ of order $d$. Here, $\left(\mathbb{D}^2\times \mathbb{S}^1\right)_d$ is the quotient of a solid torus by an order $d$ rotation along its core curve. Needless to say, $M_{\Gamma, (p,q)}$ is an orbifold where the core curve of the attached (solid) orbi-torus is a singular locus of order $d$. See \cite[Figure 1]{CDHMMW} for a picture. Here, $(p,q)$ is referred to as the \textit{Dehn filling co-efficient} for $M_{\Gamma, (p,q)}$. 

One can Dehn fill as many cusps as they want. If a cusp is not Dehn filled, we say that the corresponding Dehn filling co-efficient is $\infty$. It follows from Thurston's Dehn surgery theorem \cite[Theorem 5.8.2]{Th_notes}, which was extended for the orbifold case by \cite[Theorem 5.3]{DuMe}, that when the $k$-tuple consisting of Dehn filling coefficients for all $k$ cusps lie outside a compact set in $(\mathbb{R}^2)^k$, the resulting orbifold obtained by Dehn filling is hyperbolic. 

The triangulation corresponding to any $\vec{z}=(z_0, z_1, \dots, z_{n-1})$ in $\mathcal{V}(M_{\Gamma})$ with each $z_j$ in the upper half-plane gives a hyperbolic structure on the topological manifold corresponding to $M_{\Gamma}$. Let's denote this hyperbolic structure by $M_{\vec{z}}$. However, these $M_{\vec{z}}$'s are, in general, incomplete since they might not satisfy one or more holonomy equations. For such an $n$-tuple $\vec{z}$ in $\mathcal{V}(M_{\Gamma})$ with each  $z_j$ in the upper-half plane, consider the logarithms of the corresponding holonomy derivatives $u_j=\operatorname{log}\left(\mu(\mathbf{m_j})\right)$ and $v_j=\operatorname{log}\left(\mu(\mathbf{l_j})\right)$. The real numbers $p_j$ and $q_j$ which satisfy the equations $p_j u_j +q_j v_j=2 \pi \mathrm{i}$ are referred to as \textit{generalized Dehn surgery coefficients} (see \cite[Definition 4.5.1]{Th_notes}, \cite[Lemma 4.1 and Equation 32]{NZ}). It further follows from the argument given by Thurston that if $p_j$ and $q_j$ are integers so that the metric completion $\widehat{M_{\vec{z}}}$ of $M_{\vec{z}}$ is hyperbolic then, $\widehat{M_{\vec{z}}}$ is isometric to the orbifold obtained from $M_{\Gamma}$ by $(p_j,q_j)$-Dehn filling the $c_j$ cusp for all $j \in \{0, \dots, k-1\}$ (see \cite[Proposition E.6.27]{BePe}). Note that when $p_j$ and $q_j$ are integers, the above equation in $u_j$ and $v_i$ implies that for each such $j$
\begin{equation}\label{Dehneq_j}
\mu(\mathbf{m_j}^{p_j} \mathbf{l_j}^{q_j})=1.
\end{equation}
This equation is called the $j$-th \textit{Dehn filling equation}. 
It is worth mentioning that if for $\vec{z}$ in $\mathcal{V}(M_{\Gamma})$ with each $z_j$ in the upper-half plane, $\widehat{M_{\vec{z}}}$ is obtained by Dehn filling a few cusps of $M_{\Gamma}$, then \cite[Proposition 1.6]{CDM} shows that for each cusp $\widehat{c_j}$ of $\widehat{M_{\vec{z}}}$ corresponding to an unfilled cusp $c_j$ of $M_{\Gamma}$, there is a rational function $\tau_j$ on $\mathcal{V}(M_{\Gamma})$ such that $\tau_j(\vec{z})$ is a cusp moduli of $\widehat{M_{\vec{z}}}$ for the $\widehat{c_j}$-cusp. 

\section{One cusp Dehn fillings of a two cusped hyperbolic $3$-manifold}\label{Whitehead and 622 review}
Let $M_{\Gamma}$ denote a hyperbolic $3$-manifold, $c_j$ its cusps for $j \in \{0, \dots, k-1\}$, and $(\mathcal{T}, \mathcal{F})$ its ideal triangulation as in the previous section. In this section, we will consider the case when $k=2$, i.e., when $M_{\Gamma}$ has two cusps. Suppose we only Dehn fill the $c_1$ cusp and keep the $c_0$ cusp complete, i.e. the holonomy equation \ref{holo_j} is satisfied only for cusp $c_0$. Consider the subvariety $\mathcal{V}_0(M_{\Gamma})$ of $\mathcal{V}(M_{\Gamma})$ consisting of tuples $\vec{z}=(z_0, z_1,\dots, z_{n-1})\in \mathcal{V}(M_{\Gamma})$ which also satisfy equation \ref{holo_j} when $j=0$. Note that $\mathcal{V}_0(M_{\Gamma})$ is a one dimensional subvariety of $\mathcal{V}(M_{\Gamma})$ with co-dimension $1$. Denote the hyperbolic orbifold obtained by hyperbolic $(p,q)$-Dehn filing the $c_1$-cusp of $M_{\Gamma}$ as  $M_{\Gamma, (p,q)}$. A tuple $\vec{z}=(z_0, z_1,\dots, z_{n-1})\in \mathcal{V}_0(M_{\Gamma})$ that corresponds to $M_{\Gamma, (p,q)}$ can be obtained by solving the Dehn filling equation \ref{Dehneq_j} for $j=1$. 


\subsection{One-cusped Dehn fillings of the Whitehead link complement from \cite{NeumReid}}
Let's denote the Whitehead link complement by $M_W$. Neumann and Reid \cite{NeumReid} worked with an ideal triangulation of $M_W$ consisting four ideal tetrahedra, each with an edge with shape parameter $\mathrm{i}$. They denoted these four shape parameters as $x$, $y$, $z$ and $w$, which we will denote here respectively as $\mathbf{z_0}$, $\mathbf{z_1}$, $\mathbf{z_2}$ and $\mathbf{z_3}$ to keep the notational consistency. It was shown there that $\mathcal{V}(M_W)$ equals $$\left \{(z_0, z_1, z_2, z_3) \in \left(\C-\{0,1\}\right)^4: z_0 z_1 z_2 z_3=1, (1-z_0)(1-z_3)=(1-z_1)(1-z_2)\right \},$$ whereas $\mathcal{V}_0(M_W)=\left \{(z_0, -z_0^{-1}, z_0, -z_0^{-1}): z_0 \in \mathbb{C}-\{0,1\}\right \}$. It can further be seen from \cite[Proof of Theorem 6.3]{NeumReid} that for a $(p,q)$-Dehn filling on a single cusp of $M_W$ if $\vec{z}=(z_0, z_1, z_2, z_3)$ in $\mathcal{V}_0(M_W)$ is the corresponding incomplete structure, then $z_0$ is a unit algebraic integer when $p+4q\ne0$. 
\subsection{One-cusped Dehn fillings of the  $6^2_2$ link complement from \cite{CDHMMW}}\label{onecusp622}
Let's denote the $6^2_2$ link complement by $M_{6^2_2}$. The ideal triangulation of $M_{6^2_2}$ that \cite{CDHMMW} considered for computing varieties $\mathcal{V}$ and $\mathcal{V}_0$ has four regular ideal tetrahedra. The notation for the four shape parameters, one for each ideal tetrahedra, in \cite{CDHMMW} are $x, y, z$ and $w$. We will denote them here respectively as $\mathbf{z_0}$, $\mathbf{z_1}$, $\mathbf{z_2}$ and $\mathbf{z_3}$. Note that all these $\mathbf{z_j}$'s and their images under $\zeta_1$ and $\zeta_2$ are $\frac{1+\sqrt{3}\mathrm{i}}{2}$. It was shown in \cite{CDHMMW} that
\begin{align*}
\mathcal{V}(M_{6^2_2})&=\left \{(z_0, z_1, z_2, z_3)\in \left (\C-\{0,1\}\right)^4: (1-z_0)(1-z_1)(1-z_2)(1-z_3) = z_0z_3 = z_1z_2\right \},\text{ and}\\ 
\mathcal{V}_0(M_{6^2_2})&=\left \{(z_0, z_3) \in \left (\C-\{0,1\}\right)^2: 1-z_0(1-z_0)z_3(1-z_3)=0\right\}.
\end{align*} 
Unlike the Whitehead case, $\mathcal{V}_0(M_{6^2_2})$ is not parametrized by a single variable corresponding to a shape parameter. Thus, the Dehn filling points in $\mathcal{V}_0(M_{6^2_2})$ were studied in \cite{CDHMMW} using the symmetric homogeneous polynomial $s_d(u,v)=\sum_{j=0}^{d}u^j v^{d-j}$, and the regular map $\phi: \mathbb{C}^2 \to \mathbb{C}^2$ defined as $\phi(u,v)=(u+v, uv)$. As noted in \cite[Equation 4]{CDHMMW}, $u^{d+1}-v^{d+1}=(u-v) s_d(u,v)$. 

If $u+v$ and $uv$ are denoted by $\sigma_1$ and $\sigma_2$, \cite[Lemma 6.5]{CDHMMW} shows that there is polynomial $t_d \in \mathbb{Z}[\sigma_1,\sigma_2]$ such that $\phi^{\ast}(t_d)=s_d$ where $\phi^{\ast}: \mathbb{Z}[\sigma_1,\sigma_2] \to \mathbb{Z}[u,v]$ is the pullback ring homomorphism sending $\sigma_1$ and $\sigma_2$ to $u+v$ and $uv$ respectively, and,

\begin{equation*}
t_d(\sigma_1,\sigma_2)=\sigma_1^d+(-d+1) \sigma_1^{d-2} \sigma_2+n_2 \sigma_1^{d-4} \sigma_2^2+ \ldots+ n_{\lfloor d/2 \rfloor} \sigma_1^{\delta} \sigma_2^{\lfloor d/2 \rfloor}
\end{equation*}
where all $n_j$'s are integers, $\delta \in \{0,1\}$ and $\delta=d \operatorname{mod} 2$.

It is remarked in \cite[p. 5338]{CDHMMW} that $\phi^{\ast}(1+\sigma_1 \sigma_2 -\sigma_2-\sigma_2^2)=1-z_0(1-z_0)z_3(1-z_3)$ and $\phi$ restricts to a regular map from $\mathcal{V}_0(M_{6^2_2})$ to the zero set $\mathcal{U}_0(M_{6^2_2})$ of $1+\sigma_1 \sigma_2 -\sigma_2-\sigma_2^2$. Note that $\mathcal{U}_0(M_{6^2_2})$ can be parametrized just by $\sigma_2$. Let $(p,q)$ be a pair of non-negative integers and $(\sigma_1, \sigma_2)$ be the corresponding point in $\mathcal{U}_0(M_{6^2_2})$ for the hyperbolic $(p,q)$-Dehn filling, then was shown in \cite[Proposition 6.6]{CDHMMW} that when $q \ne 0$ and $p \ne 2q$, $\sigma_2$ is a unit algebraic integer.  We end this section by recording the following observation made in \cite[Proof of Corollary 6.14]{CDHMMW} as a fact below, which we will use later. 

\begin{fact}\label{quad_unit}
If $\mathbf{z}$ is a unit non-rational algebraic integer with positive imaginary part such that $\Q(\mathbf{z})$ is quadratic imaginary, then $\Q(\mathbf{z})$ is either $\Q(\mathrm{i})$ (and $\mathbf{z}=\mathrm{i}$) or $\Q(\mathrm{i}\sqrt{3})$ (and $\mathbf{z}=\frac{\pm 1+\sqrt{3}\mathrm{i}}{2}$). 
\end{fact}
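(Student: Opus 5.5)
The Fact says that a unit, non-rational algebraic integer $\mathbf{z}$ in the upper half-plane generating a quadratic imaginary field must be $\mathrm{i}$ or $\frac{\pm1+\sqrt{3}\mathrm{i}}{2}$. The plan is to use the structure of the unit group of an imaginary quadratic field.

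First, since $\mathbf{z}$ is an algebraic integer, it lies in the ring of integers $\mathcal{O}_K$ of $K=\Q(\mathbf{z})$. Since $\mathbf{z}$ is a unit, it lies in $\mathcal{O}_K^\times$.

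Next, write $K=\Q(\sqrt{-d})$ with $d>0$ squarefree. For $\alpha \in \mathcal{O}_K$, the norm is $N(\alpha)=\alpha\bar\alpha=|\alpha|^2$, which is a nonnegative rational integer. A unit must satisfy $N(\alpha)=\pm1$, and positivity forces $|\alpha|=1$. So every unit lies both on the unit circle and in the lattice $\mathcal{O}_K\subset\C$. That intersection is finite, so $\mathcal{O}_K^\times$ consists of roots of unity.

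We then identify which roots of unity can occur. A primitive $n$-th root of unity has degree $\varphi(n)$ over $\Q$. Since $\mathbf{z}$ is non-rational and $K$ is quadratic, we need $\varphi(n)=2$, which gives $n\in\{3,4,6\}$.

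For $n=4$, we get $\mathbf{z}=\pm\mathrm{i}$, and $K=\Q(\mathrm{i})$. For $n=3$ or $n=6$, we get $\mathbf{z}=\frac{\pm1\pm\sqrt{3}\mathrm{i}}{2}$, and $K=\Q(\mathrm{i}\sqrt{3})$.

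Finally, the hypothesis $\operatorname{Im}\mathbf{z}>0$ selects $\mathbf{z}=\mathrm{i}$ in the first case and $\mathbf{z}=\frac{\pm1+\sqrt{3}\mathrm{i}}{2}$ in the second.

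There is no real obstacle here. The only point needing care is that the norm in an imaginary quadratic field equals $|\alpha|^2$ and is therefore positive. This is what rules out non-torsion units, and it rests on $K$ having no real embeddings.
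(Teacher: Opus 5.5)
Your argument is correct and complete. The paper gives no proof of its own and simply records this fact from \cite[Proof of Corollary 6.14]{CDHMMW}; your reasoning (units of an imaginary quadratic field have norm $|\mathbf{z}|^2=1$ and so are roots of unity, $\varphi(n)=2$ forces $n\in\{3,4,6\}$, and $\operatorname{Im}\mathbf{z}>0$ leaves $\mathrm{i}$ and $\frac{\pm 1+\sqrt{3}\mathrm{i}}{2}$) is the standard justification for it.
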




\section{Gluing varieties of the Berge manifold and the $(-2,3,8)$-pretzel link complement}\label{pretzel berge gluing}
In this section, we describe gluing variety $\mathcal{V}$ and its subvariety $\mathcal{V}_0$ for both the Berge manifold and the $(-2,3,8)$-pretzel link complement. The links associated with these two sister manifolds are shown in Figure \ref{link_pics}. We will use SnapPy \cite{snappy} to procure the associated triangulations for these link complements. We first record the following fact which we will use multiple times in the paper. 

\begin{fact}\label{sym_fact}
For both the Berge manifold and the $(-2,3,8)$ pretzel link complement, it can be checked from SnapPy \cite{snappy} that the cusps are exchanged by an orientation preserving isometry of the manifold. So, for each of the link complements shown in Figure \ref{link_pics}, a one-cusped hyperbolic Dehn filling of the knotted cusp is isometric to some one-cusped hyperbolic Dehn filling of the unknotted cusp and vice versa (see \cite[Fact 2.3]{Mon}).
\end{fact}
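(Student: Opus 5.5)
The plan is to treat the statement in two parts: first, certify that each manifold has an orientation-preserving isometry swapping its two cusps; second, deduce the correspondence between one-cusped fillings by a purely topological argument followed by Mostow--Prasad rigidity.

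For the first part, load each link complement of Figure \ref{link_pics} into SnapPy \cite{snappy}. Compute its canonical (Epstein--Penner) cell decomposition, and from it the full isometry group, for instance via \texttt{M.symmetry\_group()} or \texttt{M.is\_isometric\_to(M, return\_isometries=True)}. The recognition of isometries is combinatorial: it compares canonical decompositions, with shapes verified exactly. Here this is unproblematic because both manifolds are arithmetic with shapes in $\Q(\mathrm{i})$ and $\Q(\mathrm{i}\sqrt{3})$ respectively. The certificate is therefore rigorous and not merely numerical. For each isometry $f$ returned, SnapPy records the cusp permutation and, for each cusp, a matrix $A_f\in \mathrm{GL}_2(\Z)$ giving the induced map on $\operatorname{H}_1$ of the cusp tori with respect to the chosen peripheral bases $(\mathbf{m_j},\mathbf{l_j})$. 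I would pick an $f$ whose permutation is the transposition $(c_0\,c_1)$ and whose cusp matrices have determinant $+1$. Since $\det A_f$ records whether $f$ preserves the orientation of the cusp tori, and hence of $M$, such an $f$ is exactly what we need. If the explicit triangulations are used later, one can alternatively write down the combinatorial automorphism of the ideal triangulation that realizes $f$.

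For the second part, let $f$ map $c_1$ to $c_0$ with cusp matrix $A=A_f\in\mathrm{SL}_2(\Z)$. Then $f$ carries the slope $p\mathbf{m_1}+q\mathbf{l_1}$ to $p'\mathbf{m_0}+q'\mathbf{l_0}$, where $(p',q')^{T}=A(p,q)^{T}$. Because $A$ is invertible over $\Z$, we have $\gcd(p',q')=\gcd(p,q)=d$. Consequently $f$, restricted to the complement of an open cusp neighborhood, extends over the attached solid torus (or order-$d$ solid orbi-torus) to a homeomorphism of orbifolds $M_{(p,q)}^{c_1}\to M_{(p',q')}^{c_0}$. If one side is hyperbolic, Mostow--Prasad rigidity (in its orbifold form) upgrades this homeomorphism to an isometry. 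The same argument applied to $f^{-1}$ gives the converse direction; this is the content of \cite[Fact 2.3]{Mon}.

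The main obstacle is the first part: making sure that the cusp-swapping symmetry is genuinely orientation-preserving, rather than an orientation-reversing isometry that merely permutes the cusps. I would handle this by reading off the determinant of the peripheral matrices directly. As a cross-check, I would also confirm that the composite of $f$ with the orientation data of the triangulation keeps every tetrahedron's shape parameter in the upper half-plane, rather than sending it to its complex conjugate.
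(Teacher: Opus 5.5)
Your proposal is correct and follows the paper's route. The paper also rests on a SnapPy check that an orientation-preserving isometry exchanges the two cusps, and the consequence it delegates to \cite[Fact 2.3]{Mon} is exactly your argument: extend the isometry over the attached solid (orbi-)torus, using that the cusp matrix lies in $\mathrm{SL}_2(\Z)$ and so preserves the slope and $\gcd(p,q)$, then apply Mostow--Prasad rigidity. One cosmetic slip: the shapes lie in $\Q(\mathrm{i}\sqrt{3})$ for the Berge manifold and in $\Q(\mathrm{i})$ for the $(-2,3,8)$-pretzel link complement, the reverse of what your ``respectively'' says, though this does not affect the argument.
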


We now make a remark on the notation that we will use in all cusp triangulation pictures in this paper. 
\begin{remark}
For a cusp triangle in a cusp triangulation, $T^j_i$ represents the view from the $j$-th (ideal) vertex of the (ideal) tetrahedron $T_i$ and the label associated with one of its vertex represents the label of the other (ideal) vertex of the corresponding tetrahedral edge incident to the (ideal) vertex $j$. 
\end{remark}

Before we begin, we briefly recall the notions of isomorphism signature and isometry signature from \cite{Burton} and \cite{FGGTV} respectively. 
\subsection{Isomorphism signature and Isometry signature} Defined by Burton \cite[\S 3.2, p. 157]{Burton}, the isomorphism signature of a three-manifold triangulation is a string uniquely representing the gluing data of the said triangulation. Burton \cite[Theorem 6]{Burton} also showed that the isomorphism signature is a complete invariant of the triangulation isomorphism class. The isometry signature of a hyperbolic three-manifold is a complete invariant of its isometry class, defined in \cite[Definition 3.4]{FGGTV} as one of its specific isomorphism signature related to its Epstein-Penner canonical cell decomposition \cite{EpPe}.

\begin{figure}
\includegraphics[scale=.396]{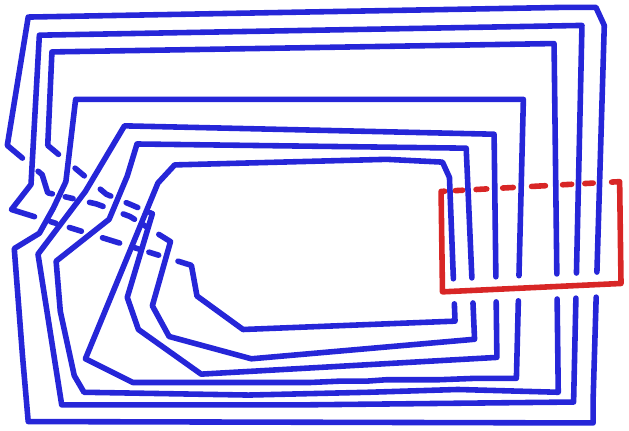} \qquad \includegraphics[scale=.324] {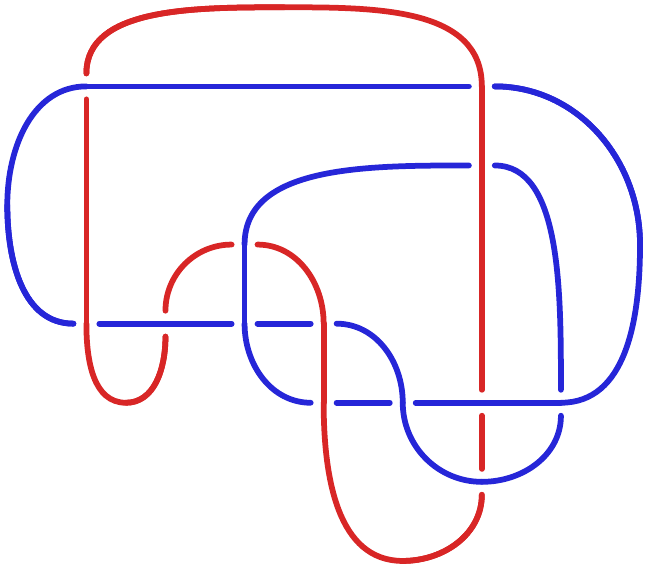}
\caption{Left: The Berge manifold link (picture drawn on SnapPy \cite{snappy}), Right: Link \texttt{L13n5885} (picture taken from SnapPy\cite{snappy})}
\label{link_pics}
\end{figure}
\subsection{Berge Manifold}\label{berge_gluing}


%

We will use $M$ denote the Berge manifold.  On SnapPy \cite{snappy}, we first draw the link whose complement is the Berge manifold. This link is shown in the left picture of Figure \ref{link_pics} (cf. picture in \cite[row $3$ of Table A.1]{MartPet}). Then we retrieve the SnapPy triangulation from this by appending the \texttt{.\_get\_tetrahedra\_gluing\_data()} command on this SnapPy manifold, which turns out to be a regular ideal triangulation (appending the \texttt{.tetrahedra\_shapes()} command will show this). The triangulation isomorphism signature for this triangulation is
$$\texttt{eLMkbbdddemdxi\_baBddIaBBrt}.$$

Let $T_0, T_1, T_2$ and $T_3$ to denote the four tetrahedra in this regular ideal triangulation of $M$. The (ideal) vertices of these (ideal) tetrahedra are labelled as $0$, $1$, $2$ and $3$ in SnapPy \cite{snappy}. Let $z_0$ (respectively, $z_2$) denote the shape parameter of the edge of $T_0$ (respectively, $T_2$) joining ideal vertices $0$ to $2$, $z_1$ denote the shape parameter of the edge of $T_1$ joining ideal vertices $0$ and $3$, and  $z_3$ denote the shape parameter of the edge of $T_3$ joining ideal vertices $0$ and $1$.
Then the edge equations turn out to be 
\begin{align}
z_0 z_1 &=\zeta_2(z_2) \zeta_2(z_3)\label{Be0}\\
z_2 z_3 &=\zeta_2(z_0) \zeta_2(z_1). \label{Be1}
\end{align}
So, the gluing variety $\mathcal{V}(M)$ of the Berge manifold is the set of $4$-tuples $(z_0,z_1,z_2,z_3)$ in $\left(\mathbb{C}-\{0,1\}\right)^4$ cut out by equations \ref{Be0} and \ref{Be1}. 

\begin{figure}
\begin{tikzpicture}[scale=1.75]

\draw [red, thick, ->>](.5,.866)--(.75, .433);
\draw [red, thick](.75,.433)--(1, 0);

\node at (1.5, .3) {$T_0^0$};
\node at (1.2, .15){$1$};
\node at (1.5, .7){$2$};
\node at (1.8, .15){$3$};

\node at (1, .6) {$T_1^0$};
\node at (.7, .73){$1$};
\node at (1.3, .73){$2$};
\node at (1, .2){$3$};
\draw [blue, thick, ->>](1,0)--(1.5,0);
\draw [blue, thick](1.5,0)--(2,0);
\draw [blue, thick, ->>](.5,.866)--(1,.866);
\draw [blue, thick](1,.866)--(1.5,.866);

\draw (1,0)--(1.5,.866);
\draw [red, thick, ->>](1.5,.866)--(1.75, .433);
\draw [red, thick](1.75,.433)--(2, 0);
\draw[blue] (.5,.866)--(1.5,.866);


\end{tikzpicture}
\caption{Cusp triangulation of the unknotted cusp of the Berge manifold}
\label{cusp0_Berge}
\end{figure}

We also record the cusp triangulation in Figure \ref{cusp0_Berge} of the unknotted cusp of the Berge manifold. Call this cusp $c_0$. Generators for the first integral homology of this torus cusp cross-section are shown as the red and blue directed edges in Figure \ref{cusp0_Berge}. Note that the holonomy equation for this cusp is $-\zeta_1(z_1) z_0\zeta_2(z_0)=1$ which is equivalent to 
\begin{equation}
z_0=z_1.\label{Bcomhol}
\end{equation}
This means the subvariety $\mathcal{V}_0(M)$ of $\mathcal{V}(M)$ where cusp $c_0$ (corresponding to the unknotted component in Figure \ref{link_pics}) is complete, is the set of  $4$-tuples $(z_0,z_1,z_2,z_3)$ in  $\mathcal{V}(M)$ which further satisfy equation \ref{Bcomhol}.

\subsection{$(-2,3,8)$-pretzel link complement}\label{pretzel_gluing}

\begin{figure}
\begin{tikzpicture}
\filldraw[fill=yellow!50](0,2) --(2,6)--(6,4)--(4,0)--cycle;

\draw (0,0)--(6,0);
\draw (0,0)--(0,6);
\draw (6,0)--(6,6);
\draw (0,6)--(6,6);
\draw (2,0)--(2,6);
\draw (4,0)--(4,6);
\draw (0,2)--(6,2);
\draw (0,4)--(6,4);
\draw (0,2)--(4,6);
\draw (0,6)--(4,2);

\draw (0,0)--(2,2);
\draw (2,0)--(4,2);
\draw (4,4)--(6,2);
\draw (4,4)--(6,6);
\draw (2,2)--(4,0);
\draw (4,6)--(6,4);
\draw (4,0)--(6,2);

\draw[thick, ->>,color=red] (0,2)--(2,1);
\draw[thick, color=red] (2,1)--(4,0);
\draw[thick, ->>,color=red] (2,6)--(4,5);
\draw[thick, color=red] (4,5)--(6,4);

\draw[thick, ->>, color=blue] (0,2)--(1,4);
\draw[thick, color=blue] (1,4)--(2,6);
\draw[thick, ->>,color=blue] (4,0)--(5,2);
\draw[thick, color=blue] (5,2)--(6,4);

\draw[thick, violet, ->>] (0,2)--(0,1);
\draw [thick, violet](0,1)--(0,0);

\node at (.75, 1.2){$T_3^1$};
\node at (.2, .5){$0$};
\node at (.2, 1.8){$3$};
\node at (1.6, 1.8){$2$};

\node at (1.2, .8){$T_0^1$};
\node at (.5, .2){$0$};
\node at (1.8, 1.5){$3$};
\node at (1.7, .25){$2$};

\node at (.9, 3.3){$T_1^2$};
\node at (.2, 2.8){$1$};
\node at (.2, 3.8){$3$};
\node at (1.6, 3.8){$0$};

\node at (1.2, 2.8){$T_2^1$};
\node at (.5, 2.2){$2$};
\node at (1.8, 3.5){$0$};
\node at (1.7, 2.25){$3$};


%

\node at (1, 4.5){$T_0^2$};
\node at (.2,5.55){$3$};
\node at (1.65, 4.2){$0$};
\node at (.2, 4.2){$1$};

\node at (1, 5.5){$T_2^3$};
\node at (1.77,5.73){$1$};
\node at (.5,5.75){$2$};
\node at (1.77, 4.5){$0$};

\node at (3, 2.5){$T_3^3$};
\node at (2.2,3.55){$0$};
\node at (3.65, 2.2){$2$};
\node at (2.2, 2.2){$1$};

\node at (3, 3.5){$T_1^3$};
\node at (3.77,3.73){$2$};
\node at (2.5,3.75){$0$};
\node at (3.77, 2.5){$1$};

\node at (5, 2.5){$T_0^2$};
\node at (4.2,3.55){$3$};
\node at (5.65, 2.2){$0$};
\node at (4.2, 2.2){$1$};

\node at (5, 3.5){$T_2^3$};
\node at (5.77,3.73){$1$};
\node at (4.5,3.75){$2$};
\node at (5.77, 2.5){$0$};

\node at (4.85, 1.2){$T_1^2$};
\node at (4.2, .5){$1$};
\node at (4.2, 1.8){$3$};
\node at (5.6, 1.8){$0$};

\node at (5.2, .8){$T_2^1$};
\node at (4.5, .2){$2$};
\node at (5.8, 1.5){$0$};
\node at (5.7, .25){$3$};

\node at (2.75, 5.2){$T_3^1$};
\node at (2.2, 4.5){$0$};
\node at (2.2, 5.8){$3$};
\node at (3.6, 5.8){$2$};

\node at (3.2, 4.8){$T_0^1$};
\node at (2.5, 4.2){$0$};
\node at (3.8, 5.5){$3$};
\node at (3.7, 4.25){$2$};

\node at (2.4,1.12){$T_1^1$};
\node at (2.2,1.6){$2$};
\node at (2.2,.6){$3$};
\node at (2.75,1.05){$0$};

\node at (3.75,1.12){$T_0^3$};
\node at (3.9,1.6){$2$};
\node at (3.9,.6){$1$};
\node at (3.23,1.05){$0$};

\node at (3,1.6){$T_2^2$};
\node at (2.4,1.8){$1$};
\node at (3.6,1.8){$3$};
\node at (3,1.25){$0$};

\node at (3,.2){$T_3^2$};
\node at (2.4,.2){$3$};
\node at (3.4,.17){$1$};
\node at (3,.8){$0$};

\node at (4.4,5.12){$T_1^1$};
\node at (4.2,5.6){$2$};
\node at (4.2,4.6){$3$};
\node at (4.75,5.05){$0$};

\node at (5.75,5.12){$T_0^3$};
\node at (5.9,5.6){$2$};
\node at (5.9,4.6){$1$};
\node at (5.23,5.05){$0$};

\node at (5,5.6){$T_2^2$};
\node at (4.4,5.8){$1$};
\node at (5.6,5.8){$3$};
\node at (5,5.25){$0$};

\node at (5,4.2){$T_3^2$};
\node at (4.4,4.2){$3$};
\node at (5.4,4.17){$1$};
\node at (5,4.8){$0$};

\end{tikzpicture}
\caption{Cusp triangulation of the knotted cusp of the $(-2,3,8)$-pretzel link complement}
\label{cusp1_pretzel}
\end{figure}

We are going to use $N$ to denote the $(-2,3,8)$-pretzel link complement. A picture this link can be found in \cite[Row 2 in Table A.1]{MartPet}. The $(-2,3,8)$-pretzel link is also known as the link \texttt{L13n5885} in the Hoste-Thistlethwaite notation (see the right picture of Figure \ref{link_pics}). One could retrieve the SnapPy ideal triangulation of the this link complement by entering 
$$\texttt{Manifold(`L13n5885').\_get\_tetrahedra\_gluing\_data()}$$ in SnapPy \cite{snappy}. The triangulation isomorphism signature for this triangulation is 
$$\texttt{eLPkbcdddlfffg\_baBBdeaBBh}.$$
There are four tetrahedra in this triangualtion\textemdash appending \texttt{.tetrahedra\_shapes()} one can check from SnapPy \cite{snappy} that all four of them have shape parameters in $\{\mathrm{i}, 1+\mathrm{i}, \frac{1+\mathrm{i}}{2}\}$. Similarly as before, we are going to use $T_0, T_1, T_2$ and $T_3$ to denote the four ideal tetrahedra in this triangulation indexed as respectively $0$, $1$, $2$ and $3$ in SnapPy \cite{snappy}. SnapPy \cite{snappy} labels the (ideal) vertices of these (ideal) tetrahedra by $0$, $1$, $2$ and $3$. To write the edge equations in terms of the shape parameters, we will use the following choice: $z_0$ will denote shape parameter of the edge joining (ideal) vertices $0$ and $3$ of $T_0$, $z_1$ will denote the shape parameter of the edge joining (ideal) vertices $0$ and $1$ of $T_1$, and $z_2$ (respectively, $z_3$) will denote the shape parameter of the edge joining (ideal) vertices $0$ and $2$ of $T_2$ (respectively, $T_3$). These choices yield the following edge equations 
\begin{align}
z_0 z_1 z_2 z_3&=1\label{Pe2}\\
\zeta_1(z_0) \zeta_1(z_1)&=\zeta_1(z_2)\zeta_1(z_3).\label{Pe0}
\end{align}
In other words, the gluing variety $\mathcal{V}(N)$ of the $(-2,3,8)$-pretzel link complement consists of precisely the set of $4$-tuples $(z_0,z_1,z_2,z_3)$ in $\left(\mathbb{C}-\{0,1\}\right)^4$ cut out by equations \ref{Pe2} and \ref{Pe0}.

Cusp $0$ (respectively, cusp $1$) of this SnapPy triangulation corresponds to the unknotted component (respectively, knotted component) of the link. Figure \ref{cusp1_pretzel} shows the cusp triangulation of the cusp corresponding to the knotted complement obtained from this triangulation. We are going to denote this cusp by $c_1'$. In Figure \ref{cusp1_pretzel}, the generators of the first homology of the torus cusp cross-section are showed as the red and blue directed edges. Using these edges, we can read off the holonomy equation for this cusp from Figure \ref{cusp1_pretzel} as $z_3\zeta_2(z_2)\zeta_1(z_2)\zeta_2(z_3)\zeta_1(z_1)\zeta_2(z_0)\zeta_1(z_3)=1$, which is equivalent to 
\begin{equation}
z_2=\zeta_1(z_1) \zeta_2(z_0).\label{Pretzel_hol_eq}
\end{equation}

So, the subvariety $\mathcal{V}_0(N)$ of $\mathcal{V}(N)$ where cusp $c_1'$ (corresponding to the knotted component shown in the right of Figure \ref{link_pics}) is complete is the subset of $\mathcal{V}(N)$ consisting of $4$-tuples $(z_0,z_1,z_2,z_3)$ further satisfying equation \ref{Pretzel_hol_eq}.


\section{Parametrization of $\mathcal{V}_0(M)$ and Proof of Theorem \ref{Bcusptrace}}\label{Berge V0 parametrization}
We begin this section with the aim of parametrizing the one dimensional subvariety $\mathcal{V}_0(M)$ by a single parameter. In view of the technique used for $\Sp^3-6^2_2$ in \cite{CDHMMW}, we will use $\sigma_1$, $\sigma_2$, $\sigma_1'$ and $\sigma_2'$ to denote $z_0+z_1$, $z_0 z_1$, $z_2+z_3$ and $z_2 z_3$ respectively. In other words, $$(\sigma_1, \sigma_2, \sigma_1', \sigma_2')=\Phi(z_0,z_1,z_2,z_3)\coloneq \left(\phi(z_0,z_1), \phi(z_2,z_3)\right).$$ 

Now, Equation \ref{Be0} is equivalent to 
\begin{equation}
\sigma_2=\frac{\sigma_2'-\sigma_1'+1}{\sigma_2'}. \label{sigma_2}
\end{equation}
Similarly, Equation \ref{Be1} is equivalent to 
\begin{equation}
\sigma_2'=\frac{\sigma_2-\sigma_1+1}{\sigma_2}. \label{sigmaprime_2}
\end{equation}
Define $\mathcal{U}(M)$ to be the set of $4$-tuples $(\sigma_1, \sigma_2, \sigma_1', \sigma_2')$ in $(\mathbb{C}-\{0,1\})^4$ satisfying the above two equations. Note that the map $\Phi$ sending $(z_0,z_1, z_2,z_3)$ to $\Phi(z_0,z_1, z_2,z_3)$ is a regular map from $\mathcal{V}(M)$ to $\mathcal{U}(M)$. 
The above two equations together imply 
\begin{equation}
\sigma_2'-\sigma_1'=\sigma_2-\sigma_1. \label{sigmarelations}
\end{equation}

Equation \ref{Bcomhol} implies 
\begin{align}
\sigma_1=2 z_1\quad & \text{ and } \quad \sigma_2=\frac{\sigma_1^2}{4}=z_1^2. \label{sigmaz1}
\end{align}
The two equations above give us 
\begin{align*}
\sigma_2'=\frac{\sigma_1^2}{4}-\sigma_1+\sigma_1'.
\end{align*}

On the other hand, using Equation \ref{sigma_2} and Equation \ref{sigmaz1}, we get, 
\begin{equation}
\sigma_2'=\frac{4 (1-\sigma_1')}{(\sigma_1^2-4)}.\label{sigma2}
\end{equation}

Comparison of the above two equations together with the fact that $\sigma_1=2z_1$ result in 
\begin{equation}\label{sigma_1primeberge}
\sigma_1'=-z_1^2+2z_1+1- \frac{2}{z_1}+\frac{1}{z_1^2}.
\end{equation}
If we use the above formula in Equation \ref{sigma2}, we get, 
\begin{equation}\label{sigma_2primeberge}
\sigma_2'=\left(\frac{z_1-1}{z_1}\right)^2=1-\frac{2}{z_1}+\frac{1}{z_1^2}.
\end{equation}
We record the following lemma which we will need to use later. 
\begin{lemma}
$\Q (\sigma_1, \sigma_2)=\Q (\sigma_1', \sigma_2')=\Q(z_1).$
\end{lemma}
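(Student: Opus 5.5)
The plan is to prove the two equalities separately, in each case showing containment in both directions. Throughout we work at a point of $\mathcal{V}_0(M)$, so that $z_0=z_1$ by Equation \ref{Bcomhol}. Hence Equations \ref{sigmaz1}, \ref{sigma_1primeberge} and \ref{sigma_2primeberge} are all available. These express $\sigma_1,\sigma_2,\sigma_1',\sigma_2'$ as rational functions of $z_1$ with rational coefficients, and $z_1\neq 0$, so every expression is defined.

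First I show $\Q(\sigma_1,\sigma_2)=\Q(z_1)$. By Equation \ref{sigmaz1}, $\sigma_1=2z_1$ and $\sigma_2=z_1^2$, so $\Q(\sigma_1,\sigma_2)\subseteq \Q(z_1)$. Conversely, $z_1=\sigma_1/2\in\Q(\sigma_1)$, which gives equality.

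Next I show $\Q(\sigma_1',\sigma_2')=\Q(z_1)$. By Equations \ref{sigma_1primeberge} and \ref{sigma_2primeberge}, both $\sigma_1'$ and $\sigma_2'$ lie in $\Q(z_1)$. For the reverse inclusion I need to recover $z_1$ rationally from $\sigma_1'$ and $\sigma_2'$.
\begin{itemize}
\item The relation \ref{sigmarelations} gives $\sigma_2'-\sigma_1'=\sigma_2-\sigma_1=z_1^2-2z_1$. One can also see this by subtracting \ref{sigma_1primeberge} from \ref{sigma_2primeberge}.
\item Set $a=\sigma_2'-\sigma_1'\in\Q(\sigma_1',\sigma_2')$, so $z_1^2=a+2z_1$.
\item From \ref{sigma_2primeberge}, $z_1^2\sigma_2'=(z_1-1)^2=z_1^2-2z_1+1$.
\item Substituting $z_1^2=a+2z_1$ into this gives $(a+2z_1)\sigma_2'=a+1$, which is linear in $z_1$.
\item Solving, $z_1=\frac{a+1-a\sigma_2'}{2\sigma_2'}$. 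This makes sense because $\sigma_2'\neq 0$, which holds since $z_1\neq 1$.
\end{itemize}
Thus $z_1\in\Q(\sigma_1',\sigma_2')$, and the second equality follows.

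The only step needing care is this inversion. One must get a \emph{linear} relation for $z_1$ rather than just a quadratic one, and one must check that the denominator $\sigma_2'$ is nonzero. Both are handled above: the linear relation comes from combining \ref{sigmarelations} with \ref{sigma_2primeberge}, and the nonvanishing of $\sigma_2'$ uses only that $z_1\notin\{0,1\}$.
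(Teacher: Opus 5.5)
Your proof is correct and is essentially the paper's argument: your inversion $z_1^2=(a+1)/\sigma_2'$, $2z_1=z_1^2-a$ is exactly Equation \ref{sigma_2} combined with \ref{sigmarelations}, recovering $\sigma_2$ and then $\sigma_1=2z_1$ from $(\sigma_1',\sigma_2')$. The paper cites these relations directly to get $\Q(\sigma_1,\sigma_2)=\Q(\sigma_1',\sigma_2')$ (an equality that holds on all of $\mathcal{U}(M)$) and then uses $\sigma_1=2z_1$, whereas you route both equalities through $z_1$ on $\mathcal{V}_0(M)$ — a purely organizational difference.
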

\begin{proof}
The first equality follows from equations \ref{sigma_2}, \ref{sigmaprime_2} and \ref{sigmarelations}. Since, $\sigma_1=2z_1$, 
the last equality then follows from the above two equations. 
\end{proof}

Now, since $\sigma_1'$ and $\sigma_2'$ are rational functions of $z_1$ given by equations \ref{sigma_1primeberge} and \ref{sigma_2primeberge} respectively, motivated by the work of \cite{CDHMMW} (paragraph after Lemma 6.5 therein), we will define the rational map $\Psi: \C \to \C^4$ as 
\begin{align}\label{BergesigmaLaurent}
\Psi(z_1)&=\left(2z_1, z_1^2, -z_1^2+2z_1+1- \frac{2}{z_1}+\frac{1}{z_1^2}, 1-\frac{2}{z_1}+\frac{1}{z_1^2}\right)=(\sigma_1, \sigma_2, \sigma_1', \sigma_2').
\end{align}
Observe that the image of $\Psi$ is a sub-variety of $\mathcal{U}(M)$ and equals the image of $\mathcal{V}_0(M)$ under the regular map $\Phi$. Denote this sub-variety as $\mathcal{U}_0(M)$. $\Psi$ induces the map $\Psi^{\ast}: \mathbb{C}(\mathcal{U}_0(M))\to \mathbb{C}(z_1)$ where $\mathbb{C}(\mathcal{U}_0(M))$ is the function field of $\mathcal{U}_0(M)$. 
\begin{remark}
We note that although a single shape parameter can be used here to parametrize the one-dimensional variety corresponding to the one-cusped Dehn fillings (as in the Whitehead case in \cite{NeumReid}), this parametrization is anchored here in a more nuanced way. 
\end{remark}

Before we proceed, we make a note of cusp parameter function on $\mathcal{V}_0(M)$ (see \cite[Proposition 1.6]{CDM}). Let $\mathbf{m}_0$ and $\mathbf{l}_0$ denote respectively the red and blue directed edges in Figure \ref{cusp0_Berge}. Then $(\mathbf{m}_0, \mathbf{l}_0)$ is an oriented generating pair for the first homology of the cusp torus corresponding to the unknotted component of the Berge manifold. We choose the red directed edge as the reference edge as in the proof of \cite[Proposition 1.6]{CDM} to conclude from this proposition that cusp parameter function defined on $\mathcal{V}_0(M)$ is $\tau=\zeta_2(z_1)$. This gives us the following fact. 
\begin{fact}\label{Bfilledshape}
If $(z_0,z_1,z_2,z_3) \in \mathcal{V}_0(M)$ corresponds to some hyperbolic $(p,q)$-Dehn filling on the cusp corresponding to the knotted component of the Berge manifold as in Figure \ref{link_pics}, then the cusp field of the corresponding filled manifold is $\Q(z_1)$. 
\end{fact}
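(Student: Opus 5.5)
The plan is to read off the cusp field directly from the cusp parameter function supplied by \cite[Proposition 1.6]{CDM}, and then observe that this function is a rational M\"obius transformation of $z_1$. First I will set up the situation. A hyperbolic $(p,q)$-Dehn filling of the knotted cusp leaves the unknotted cusp $c_0$ complete. By the discussion in Section \ref{Whitehead and 622 review}, the corresponding incomplete structure $\vec{z}=(z_0,z_1,z_2,z_3)$ therefore lies in $\mathcal{V}(M)$, has all coordinates in the upper half-plane, and satisfies the holonomy equation \ref{Bcomhol} for $c_0$. In other words $\vec{z}\in\mathcal{V}_0(M)$ and $z_0=z_1$. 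The metric completion $\widehat{M_{\vec z}}$ is the filled orbifold, and its unique cusp $\widehat{c_0}$ corresponds to $c_0$.

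Next I will justify that $\tau(\vec z)=\zeta_2(z_1)$ is a cusp modulus of $\widehat{c_0}$ with respect to the oriented pair $(\mathbf{m}_0,\mathbf{l}_0)$. Following the proof of \cite[Proposition 1.6]{CDM}, I place the cusp triangulation of Figure \ref{cusp0_Berge} in $\C$ with the red edge $\mathbf{m}_0$ as the reference edge. The translation $\mathbf{l}_0$ is then obtained by multiplying $\mathbf{m}_0$ by the product of shape parameters at the vertex angles swept out when rotating from the red edge to the blue edge. The figure contains only the two triangles $T_0^0$ and $T_1^0$, so I would check directly that, once $z_0=z_1$ is imposed, this product equals $\zeta_2(z_1)=\frac{z_1-1}{z_1}$ up to the orientation convention. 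Since $(\mathbf{m}_0,\mathbf{l}_0)$ is oriented, this value lies in the upper half-plane, as $\zeta_2$ preserves it.

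Finally, the cusp field is by definition $\Q(\mathbf{l}/\mathbf{m})$ for any oriented generating pair, independent of the choice. It is therefore $\Q\!\left(\frac{z_1-1}{z_1}\right)$. The map $w=\frac{z_1-1}{z_1}=1-\frac1{z_1}$ has rational coefficients, with inverse $z_1=\frac{1}{1-w}$, so $\Q(w)=\Q(z_1)$, which is the claim.

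The only step needing care is the second one. One has to read off correctly from Figure \ref{cusp0_Berge} which vertex angles lie between the red and blue edges, and make sure the substitution $z_0=z_1$ collapses the product to $\zeta_2(z_1)$ rather than, say, $\zeta_1(z_1)$ or $z_1$. Even so, any of these alternatives is a rational M\"obius function of $z_1$, so the conclusion $\Q(\tau)=\Q(z_1)$ is robust to such a bookkeeping slip.
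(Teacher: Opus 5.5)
Your proposal is correct and follows the paper's argument. The paper likewise applies \cite[Proposition 1.6]{CDM} with the red edge of Figure \ref{cusp0_Berge} as reference edge to get the cusp modulus $\tau=\zeta_2(z_1)$ on $\mathcal{V}_0(M)$; it leaves implicit the step $\Q(\zeta_2(z_1))=\Q(z_1)$, which you make explicit via the rational inverse $z_1=1/(1-w)$.

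On the bookkeeping you flag: at the upper-left vertex of the parallelogram, both $\mathbf{m}_0$ and $\mathbf{l}_0$ emanate, and the angle between them is a single corner of $T_1^0$. So the modulus is directly a shape parameter of $T_1$, and you do not even need to impose $z_0=z_1$.
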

We now turn to understand the trace fields of the hyperbolic one-cusped Dehn fillings of the Berge manifold. Using Poincar\'e polyhedron theorem, we will be able to identify the elements which generate these trace fields. Consequently, as we will see in Theorem \ref{Bcusptrace} below, these trace fields coincide with the corresponding cusp fields. Our proof method will follow that in \cite[Theorem 6.2]{NeumReid} and \cite[Proposition 6.16]{CDHMMW}.  
\begin{thm}\label{Bcusptrace}
\bergecusptrace
\end{thm}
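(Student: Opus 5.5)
By the chain $\text{cusp field}\subset\text{invariant trace field}\subset\text{trace field}$ (\cite[Theorem 3.1.2]{MacRe}, \cite[Proposition 2.7]{NeumReid}), it suffices to show $\operatorname{tr}\Gamma\subset\Q(z_1)$, where $\Gamma$ is the holonomy group of $M_{(p,q)}$. By Fact \ref{Bfilledshape} the cusp field is $\Q(z_1)$, and by Fact \ref{sym_fact} we may assume the filling is on the knotted cusp $c_1$ while $c_0$ stays complete. The corresponding point $(z_0,z_1,z_2,z_3)\in\mathcal{V}_0(M)$ then has $z_0=z_1$. By the Lemma above and equations \ref{sigma_1primeberge}--\ref{sigma_2primeberge}, the symmetric functions $z_2+z_3=\sigma_1'$ and $z_2z_3=\sigma_2'$ lie in $\Q(z_1)$. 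Since $\sigma_2'=\left(\frac{z_1-1}{z_1}\right)^2$, the discriminant is $\sigma_1'^2-4\sigma_2'$, so $z_2,z_3$ lie in $\Q(z_1)$ or in a quadratic extension $K=\Q(z_1,z_2)$.

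Following \cite[Theorem 6.2]{NeumReid} and \cite[Proposition 6.16]{CDHMMW}, I will develop the four tetrahedra in $\mathbb{H}^3$ using the shapes of the incomplete structure. Place $\infty$ at a vertex on cusp $c_0$, normalise one tetrahedron to have vertices $0,1,\infty,z_0$, and lay out the others along the cusp triangulation of Figure \ref{cusp0_Berge} and the gluing data. All ideal vertices of this fundamental domain then lie in $K$. The face-pairing maps $f\in\mathcal{F}$ send one triple of vertices to another, so their matrices (normalised to determinant one) have entries in $K$ up to a common scalar. By the Poincaré polyhedron theorem, these face pairings generate $\Gamma$ for the completed filled structure, since the filled solid torus only adds the relation $\mathbf{m}_1^p\mathbf{l}_1^q$ and no new generators. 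This gives $\operatorname{tr}\Gamma\subset K$ (up to the usual sign/square-root issue for lifts to $\mathrm{SL}_2$, handled as in \cite{NeumReid} by working with $\operatorname{tr}^2$ and products of pairs).

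The main step, and the expected obstacle, is to remove the dependence on $z_2,z_3$ individually: we must show the traces are symmetric in $z_2\leftrightarrow z_3$, or expressible through $\sigma_1',\sigma_2'$. Here is my approach.

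First, I will choose the developing so that $\infty$ and the cusp-$c_0$ parabolic generators are explicit. The parabolics fixing $\infty$ are translations by $1$ and $\tau=\zeta_2(z_1)$, which lie in $\Q(z_1)$.

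Second, I will compute explicit matrices for a generating set: the two cusp-$c_0$ parabolics plus one or two further face pairings. I will then check that every generator has entries in $\Q(z_1)$ after suitable conjugation, or at least that the traces of the generators and of their pairwise products are rational functions in $z_1,\sigma_1',\sigma_2'$. The trace field of a finitely generated group is generated by the traces of the generators and of products of pairs and triples.

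If $z_2$ genuinely appears, I will exploit the $z_2\leftrightarrow z_3$ symmetry of the edge equations \ref{Be0}, \ref{Be1}: the Galois conjugation of $K/\Q(z_1)$ swapping $z_2,z_3$ should induce a conjugation of the representation. Its traces are then fixed by this Galois involution and hence lie in $\Q(z_1)$.

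Combining these steps gives $\operatorname{tr}\Gamma\subset\Q(z_1)=$ cusp field, so all three fields coincide.
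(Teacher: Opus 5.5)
\textbf{There is a genuine gap: the decisive step is never carried out.}

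Your reduction matches the paper's. By the chain cusp field $\subset$ invariant trace field $\subset$ trace field and Fact~\ref{Bfilledshape}, it suffices to show $\operatorname{tr}\Gamma_{M_{(p,q)}}\subset\Q(z_1)$. The images of the Poincar\'e face pairings of $M$ then generate the holonomy group of the filled orbifold. But the step you yourself call the main obstacle is left undone. You exhibit no fundamental domain, no generating set and no trace; you only announce that you will compute them and check that they lie in $\Q(z_1)$. That check is the whole content of the paper's proof:
\begin{itemize}
\item The four tetrahedra are assembled, with $\infty$ at a vertex of the knotted cusp, into a decahedron with vertices $0,1,z_0,x=z_0z_1,y=z_0z_1z_2,w=z_0z_1z_2\zeta_1(z_3)$.
\item The Poincar\'e presentation shows that two face pairings $a,e$ generate, so by \cite[Equation 3.25]{MacRe} only $\operatorname{tr}a$, $\operatorname{tr}e$, $\operatorname{tr}(ae)$ matter.
\item For the explicit $\mathrm{SL}_2$ lifts, the square roots disappear because on $\mathcal{V}_0(M)$ one has $z_0z_1z_2z_3=\sigma_2\sigma_2'=(z_1-1)^2$ and $(1-z_2)(1-z_3)/(z_2z_3)=\sigma_2=z_1^2$.
\item This gives $\operatorname{tr}a=1/(z_1-1)$, $\operatorname{tr}e=z_1$ and $\operatorname{tr}(ae)=(2z_1-1)/(z_1(z_1-1))$.
\end{itemize}
Your proposed generating set (the two $c_0$-parabolics plus ``one or two'' face pairings) is not shown to generate. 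With three or more generators you would also need traces of triple products.

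\textbf{The Galois fallback does not close the gap.} The claim that the involution $\theta$ of $K/\Q(z_1)$ exchanging $z_2,z_3$ ``should induce a conjugation of the representation'' is exactly what needs proof. The point $(z_1,z_1,z_3,z_2)$ does lie in $\mathcal{V}_0(M)$, since equations \ref{Be0}, \ref{Be1} and \ref{Bcomhol} are symmetric in $z_2,z_3$. But it is a different point, with both holonomy derivatives of $c_1$ inverted. To identify its holonomy with yours you need an argument such as the following. The swapped point is again positively oriented and has generalized Dehn surgery coefficients $(-p,-q)$, so it completes to the same orbifold, and Mostow--Prasad rigidity then gives conjugacy.

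Even granting this, $\theta$ acts on $\mathrm{PGL}_2(K)$, so it only yields $(\operatorname{tr}\gamma)^2\in\Q(z_1)$. The same limitation applies when $K=\Q(z_1)$. The trace field is in general a multiquadratic extension of the field generated by squared traces. So the sign/square-root issue you set aside is not cosmetic: it is precisely what separates the trace field from the invariant trace field in the statement. In the paper it is settled only by the explicit identities above.
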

\begin{figure}

\begin{tikzpicture}
\begin{scope}[xshift=-4cm, yshift=1.5cm, scale=1.75]
\filldraw[fill=magenta!40](0,0) --(.5,.866)--(1.5, .866)--(2,0)--(1,0)--(.5,-.866)--cycle;

\draw (0,0)--(1,0);
\draw (0,0)--(.5,.866);
\draw (1,0)--(.5,.866);
\draw (1,0)--(1.5,.866);
\draw (.5,.866)--(1.5,.866);

\node at (.5, .45) {$B'$};
\node at (.5, .2) {\tiny{$T_2^3$}};
\node at (.2, .15){$0$};
\node at (.5, .7){$2$};
\node at (.8, .15){$1$};

\node at (1, .5) {$C'$};
\node at (1, .7) {\tiny{$T_1^2$}};
\node at (.7, .73){$3$};
\node at (1.3, .73){$0$};
\node at (1, .2){$1$};

\begin{scope}[xshift=1cm]
\draw (0,0)--(1,0);
\draw (0,0)--(.5,.866);
\draw (1,0)--(.5,.866);

\node at (.5, .3) {$D'$};
\node at (.5, .5) {\tiny{$T_0^3$}};
\node at (.2, .15){$1$};
\node at (.5, .7){$0$};
\node at (.8, .15){$2$};
\end{scope}

\begin{scope}[xshift=-.5cm,yshift=-.866cm]
\draw (1,0)--(.5,.866);
\draw (1,0)--(1.5,.866);
\node at (1, .7) {$A'$};

\node at (1, .45) {\tiny{$T_3^2$}};
\node at (.7, .73){$0$};
\node at (1.3, .73){$1$};
\node at (1, .2){$3$};
\end{scope}
\end{scope}

\begin{scope}[xshift=2cm, yscale=.5]
\draw (0,0)--(-1.25,2);
\draw (-1.25,2)--(0,4);
\draw (0,4)--(2,4);
\draw (2,4)--(3.25,2);
\draw (3.25,2)--(2,0);
\draw (2,0)--(0,0);
\draw (1,8)--(0,0);
\draw (1,8)--(-1.25,2);
\draw [dashed](1,8)--(0,4);
\draw [dashed] (1,8)--(2,4);
\draw (1,8)--(3.25,2);
\draw (1,8)--(2,0);
\draw [dashed](2,0)--(-1.25,2);
\draw [dashed](2,0)--(0,4);
\draw [dashed](2,0)--(2,4);

\node at (-1.4,2) {$y$};
\node at (-.25,-.25) {$w$};
\node at (2.25,-.25) {$0$};
\node at (3.5,2) {$1$};
\node at (.3,4.25) {$x$};
\node at (1.75,4.25) {$z_0$};
\node at (1,8.25) {$\infty$};

\node at (-.1,2.25) {$B'$};
\node at (1.2,2.75) {$C'$};
\node at (2.5,1.75) {$D'$};
\node at (.55,.35) {$A'$};

\node at (-1,.5) {$E$};
\node at (1,-.5) {$B$};
\node at (3,.6) {$A$};

\node at (-.5,5) {$E'$};
\node at (2.5,5) {$C$};
\node at (2.25,7) {$D$};

\end{scope}
\end{tikzpicture}

\caption{Left: Four cusp triangles from the knotted cusp triangulation each corresponding to an ideal tetrahedron in the tetrahedral decomposition of the Berge manifold, Right: 
Ideal decahedron representing the Berge manifold with sides faces $A$, $B$, $C$, $D$, $E$, $E'$ and bottom faces $A'$, $B'$, $C'$, $D'$}
\label{decaberge}
\end{figure}
\begin{proof}
Leveraging Fact \ref{sym_fact}, we will assume without loss of generality that $M_{(p,q)}$ is obtained by $(p,q)$-Dehn filling on the knotted cusp. 
Left of Figure \ref{decaberge} highlights one triangle from each tetrahedron in the cusp triangulation picture for the knotted cusp shown in Figure \ref{cusp1_Berge}. After realizing these highlighted triangles as the bottom faces of the corresponding tetrahedra and $\infty$ as their common vertex, 
we see that the Berge manifold can be obtained from the decahedron in right picture of Figure \ref{decaberge} by gluing pairs of faces $(A,A')$, $(B,B')$, $(C,C')$, $(D,D')$ and $(E,E')$.  The other vertices of this decahedron are at $0$, $1$, $z_0$, $x$, $y$ and $w$ where 
\begin{align*}
x=z_0 z_1, \quad y=z_0 z_1 z_2, \quad w=z_0 z_1 z_2 \zeta_1(z_3).
\end{align*}
Let $a,b,c,d,e$ denote the isometries sending respectively $A$ to $A'$, $B$ to $B'$, $C$ to $C'$, $D$ to $D'$ and $E$ to $E'$. Let $\Gamma_M$ denote the fundamental group of the Berge manifold. Then Poincar\'e polyhedron theorem implies that
\begin{equation*}
\Gamma_M=\left \langle a, e: a e^{-1}a^2 a^{-1}e^{-2}=a^{-1}e^{-2}ae^{-1}a^2\right \rangle.
\end{equation*}
(For the forthcoming computations in this proof, we used the help of Sage \cite{sagemath}.) 
Using the position of the vertices of the decahedron, we see that the fundamental group of $M_{(p,q)}$, to be denoted here as $\Gamma_{M_{(p,q)}}$, will have the presentation as above where the $\operatorname{SL}(2,\C)$-representation of $a$ and $e$ are
\begin{align*}
a&=\sqrt{\frac{w}{yw-y^2}}\begin{pmatrix} 0 & y\\ \frac{y-w}{w} &1\end{pmatrix}\\
e&=\frac{1}{\sqrt{(x-y)(w-y)}}\begin{pmatrix}x & -xw+yw-y^2 \\ 1 & -y\end{pmatrix}.
\end{align*} 

We can use \cite[Equation 3.25]{MacRe} to see that the trace field of $\Gamma_{M_{(p,q)}}$ is generated by $\operatorname{trace}(a)$, $\operatorname{trace}(e)$ and $\operatorname{trace}(ae)$. After simplification, we get $\operatorname{trace}(a)=\sqrt{\frac{w}{yw-y^2}}=\frac{1}{\sqrt{\sigma_2 \sigma_2'}}=\frac{1}{z_1-1}$, $\operatorname{trace}(e)=\sqrt{\frac{x-y}{w-y}}=z_1$ and $\operatorname{trace}(ae)=\frac{xw-yw+y^2}{\sqrt{yw(x-y)}}=(1-\sigma_2')\sqrt{\frac{\sigma_2}{1-\sigma_1'+\sigma_2'}}=\frac{2z_1-1}{z_1(z_1-1)}$. So, the trace field of $\Gamma_{M_{(p,q)}}$ is $\Q(z_1)$. But, the cusp field of  $\Gamma_{M_{(p,q)}}$ is $\Q(z_1)$ by Fact \ref{Bfilledshape}. Since the cusp field field is a subfield of the invariant trace field (\cite[Proposition 2.7]{NeumReid}), which itself is a subfield of the trace field, we conclude that cusp field, the invariant trace field and the trace field of $\Gamma_{M_{(p,q)}}$ are all equal to $\Q(z_1)$. 
\end{proof}
\begin{remark}
The above theorem implies that knot complements obtained from Dehn filling the unknotted component of the Berge manifold (as shown in the left picture of Figure \ref{link_pics}) have the same the trace field and cusp field. As mentioned in \cite{Hoffman_3comm}, the set of these knots includes various Berge knots. 
\end{remark}

\begin{figure}
\begin{tikzpicture}[scale=1.35]

\filldraw[fill=yellow!50](1,0) --(1.5,2.598)--(4,3.464)--(3.5,.866)--cycle;

\draw (0,0)--(1,0);
\draw (0,0)--(.5,.866);
\draw (.5,.866)--(1,0);
\draw (1,0)--(1.5,.866);
\draw (.5,.866)--(1.5,.866);

\draw (1,0)--(2,0);
\draw (1.5,.866)--(2,0);
\draw (2,0)--(2.5, .866);
\draw (1.5,.866)--(2.5,.866);

\draw (2,0)--(3,0);
\draw (2.5,.866)--(3,0);
\draw (3,0)--(3.5, .866);
\draw (2.5,.866)--(3.5,.866);

\draw (3,0)--(4,0);
\draw (3.5,.866)--(4,0);
\draw (4,0)--(4.5, .866);
\draw (3.5,.866)--(4.5,.866);

\node at (.5, .4) {$T_0^3$};
\node at (.2, .15){$1$};
\node at (.5, .7){$0$};
\node at (.8, .15){$2$};

\node at (.9, .5) {$T_1^3$};
\node at (.7, .73){$0$};
\node at (1.3, .73){$2$};
\node at (.97, .2){$1$};

\node at (1.5, .4) {$T_2^2$};
\node at (1.2, .19){$1$};
\node at (1.5, .7){$3$};
\node at (1.8, .15){$0$};

\node at (2, .55) {$T_3^2$};
\node at (1.7, .73){$0$};
\node at (2.3, .73){$1$};
\node at (2, .2){$3$};

\node at (2.5, .35) {$T_2^1$};
\node at (2.2, .15){$2$};
\node at (2.5, .7){$0$};
\node at (2.8, .15){$3$};

\node at (3, .5) {$T_3^1$};
\node at (2.7, .73){$0$};
\node at (3.25, .67){$3$};
\node at (3, .2){$2$};

\node at (3.55, .4) {$T_2^0$};
\node at (3.2, .15){$1$};
\node at (3.5, .7){$2$};
\node at (3.8, .15){$3$};

\node at (4, .5) {$T_3^3$};
\node at (3.7, .73){$2$};
\node at (4.3, .73){$1$};
\node at (4, .2){$0$};

\begin{scope}[xshift=.5 cm, yshift=.866 cm]
\draw (0,0)--(1,0);
\draw (0,0)--(.5,.866);
\draw (.5,.866)--(1,0);
\draw (1,0)--(1.5,.866);
\draw (.5,.866)--(1.5,.866);

\draw (1,0)--(2,0);
\draw (1.5,.866)--(2,0);
\draw (2,0)--(2.5, .866);
\draw (1.5,.866)--(2.5,.866);

\draw (2,0)--(3,0);
\draw (2.5,.866)--(3,0);
\draw (3,0)--(3.5, .866);
\draw (2.5,.866)--(3.5,.866);

\draw (3,0)--(4,0);
\draw (3.5,.866)--(4,0);
\draw (4,0)--(4.5, .866);
\draw (3.5,.866)--(4.5,.866);

\node at (.5, .4) {$T_0^1$};
\node at (.2, .15){$0$};
\node at (.5, .7){$3$};
\node at (.8, .15){$2$};

\node at (1, .5) {$T_3^0$};
\node at (.7, .73){$1$};
\node at (1.3, .73){$2$};
\node at (1, .2){$3$};

\node at (1.5, .4) {$T_2^3$};
\node at (1.2, .15){$0$};
\node at (1.5, .7){$2$};
\node at (1.8, .15){$1$};

\node at (2, .5) {$T_1^2$};
\node at (1.7, .73){$3$};
\node at (2.3, .73){$0$};
\node at (2, .2){$1$};

\node at (2.5, .4) {$T_0^3$};
\node at (2.2, .15){$1$};
\node at (2.5, .7){$0$};
\node at (2.8, .15){$2$};

\node at (2.95, .5) {$T_1^3$};
\node at (2.7, .73){$0$};
\node at (3.3, .73){$2$};
\node at (2.97, .21){$1$};

\node at (3.5, .4) {$T_2^2$};
\node at (3.2, .15){$1$};
\node at (3.5, .7){$3$};
\node at (3.8, .15){$0$};

\node at (4, .5) {$T_3^2$};
\node at (3.7, .73){$0$};
\node at (4.3, .73){$1$};
\node at (4, .2){$3$};

\end{scope}

\begin{scope}[xshift=1 cm, yshift=1.732 cm]
\draw (0,0)--(1,0);
\draw (0,0)--(.5,.866);
\draw (.5,.866)--(1,0);
\draw (1,0)--(1.5,.866);
\draw (.5,.866)--(1.5,.866);

\draw (1,0)--(2,0);
\draw (1.5,.866)--(2,0);
\draw (2,0)--(2.5, .866);
\draw (1.5,.866)--(2.5,.866);

\draw (2,0)--(3,0);
\draw (2.5,.866)--(3,0);
\draw (3,0)--(3.5, .866);
\draw (2.5,.866)--(3.5,.866);

\draw (3,0)--(4,0);
\draw (3.5,.866)--(4,0);
\draw (4,0)--(4.5, .866);
\draw (3.5,.866)--(4.5,.866);

\node at (.6, .32) {$T_2^0$};
\node at (.2, .15){$1$};
\node at (.55, .63){$2$};
\node at (.8, .15){$3$};

\node at (1, .5) {$T_3^3$};
\node at (.7, .73){$2$};
\node at (1.3, .73){$1$};
\node at (1, .2){$0$};

\node at (1.5, .4) {$T_0^2$};
\node at (1.2, .15){$1$};
\node at (1.5, .7){$3$};
\node at (1.8, .15){$0$};

\node at (2, .5) {$T_1^1$};
\node at (1.7, .73){$3$};
\node at (2.3, .73){$2$};
\node at (2, .2){$0$};

\node at (2.5, .4) {$T_0^1$};
\node at (2.2, .15){$0$};
\node at (2.5, .7){$3$};
\node at (2.8, .15){$2$};

\node at (3, .5) {$T_3^0$};
\node at (2.7, .73){$1$};
\node at (3.3, .73){$2$};
\node at (3, .2){$3$};

\node at (3.5, .4) {$T_2^3$};
\node at (3.2, .15){$0$};
\node at (3.5, .7){$2$};
\node at (3.8, .15){$1$};

\node at (4, .5) {$T_1^2$};
\node at (3.7, .73){$3$};
\node at (4.3, .73){$0$};
\node at (4, .2){$1$};
\end{scope}

\begin{scope}[xshift=1.5 cm, yshift=2.598 cm]
\draw (0,0)--(1,0);
\draw (0,0)--(.5,.866);
\draw (.5,.866)--(1,0);
\draw (1,0)--(1.5,.866);
\draw (.5,.866)--(1.5,.866);

\draw (1,0)--(2,0);
\draw (1.5,.866)--(2,0);
\draw (2,0)--(2.5, .866);
\draw (1.5,.866)--(2.5,.866);

\draw (2,0)--(3,0);
\draw (2.5,.866)--(3,0);
\draw (3,0)--(3.5, .866);
\draw (2.5,.866)--(3.5,.866);

\draw (3,0)--(4,0);
\draw (3.5,.866)--(4,0);
\draw (4,0)--(4.5, .866);
\draw (3.5,.866)--(4.5,.866);

\node at (.5, .4) {$T_2^2$};
\node at (.24, .2){$1$};
\node at (.5, .7){$3$};
\node at (.8, .15){$0$};

\node at (1, .58) {$T_3^2$};
\node at (.7, .73){$0$};
\node at (1.3, .73){$1$};
\node at (1, .2){$3$};

\node at (1.55, .37) {$T_2^1$};
\node at (1.2, .15){$2$};
\node at (1.5, .7){$0$};
\node at (1.8, .15){$3$};

\node at (2, .5) {$T_3^1$};
\node at (1.7, .73){$0$};
\node at (2.25, .67){$3$};
\node at (2, .2){$2$};

\node at (2.58, .36) {$T_2^0$};
\node at (2.2, .15){$1$};
\node at (2.55, .63){$2$};
\node at (2.8, .15){$3$};

\node at (3, .5) {$T_3^3$};
\node at (2.7, .73){$2$};
\node at (3.3, .73){$1$};
\node at (3, .2){$0$};

\node at (3.5, .4) {$T_0^2$};
\node at (3.2, .15){$1$};
\node at (3.5, .7){$3$};
\node at (3.8, .15){$0$};

\node at (4, .5) {$T_1^1$};
\node at (3.7, .73){$3$};
\node at (4.3, .73){$2$};
\node at (4, .2){$0$};
\end{scope}

\draw [blue, thick, ->>](1,0)--(1.25, 1.299);
\draw [blue, thick](1.25,1.299)--(1.5, 2.598);

\draw [blue, thick, ->>](3.5,.866)--(3.75, 2.165);
\draw [blue, thick](3.75,2.165)--(4, 3.464);

\draw [red, thick, ->>](1,0)--(2.25, .433);
\draw [red, thick](2.25,.433)--(3.5,.866);

\draw [red, thick, ->>](1.5,2.598)--(2.75, 3.031);
\draw [red, thick](2.75,3.031)--(4, 3.464);

\end{tikzpicture}
\caption{Cusp triangulation of the knotted cusp of the Berge manifold}
\label{cusp1_Berge}
\end{figure}

\section{Arithmetic Data of the one-cusp Dehn fillings of the Berge manifold}\label{arithmetic berge}
One can identify the Berge manifold (i.e. the link complement in the left picture of Figure \ref{link_pics}) on SnapPy \cite{snappy} with \texttt{m202}. Let $\mathbf{m}_0^{202}$ and $\mathbf{l}_0^{202}$ be the meridian and the longitude chosen by SnapPy \cite{snappy} for the triangulation \texttt{Manifold(`m202')} for its cusp $0$. The isomorphism signature of this triangulation is 
$$\texttt{`eLMkbbdddemdxi\_BaaBbBba'}. $$
One can check by entering the command $$\texttt{Manifold(`m202').symmetry\_group().isometries()}$$ on SnapPy \cite{snappy} that there is an orientation preserving self-isometry $g$ of the Berge manifold\footnote{Unfortunately, as pointed out by SnapPy \cite{snappy}, this isometry does not extend to the link.} which sends cusp $0$ corresponding to  \texttt{Manifold(`m202')} to itself such that $g(\mathbf{m}_0^{202})=\mathbf{m}_0^{202} \left (\mathbf{l}_0^{202}\right)^{-1}$ and $g(\mathbf{l}_0^{202})=\mathbf{m}_0^{202}$ . One can further check from SnapPy \cite{snappy} that $g$ has order $6$. Now, by entering the command $$\texttt{Manifold(`m202').cusp\_neighborhood().view()}$$ on SnapPy \cite{snappy}, one can check that $\mathbf{m}_0^{202}$ and $\mathbf{l}_0^{202}$ form a hexagonal generating set for the peripheral subgroup $\Lambda_0^{202}$ of cusp $0$ of \texttt{Manifold(`m202')} with an acute angle between them. There are six possible hexagonal acute angled oriented pairs of generators of $\Lambda_0^{202}$. If $S_{202}$ denotes the set consisting of these six pairs, then, 
$$S_{202}=\left \{\left(g^j(\mathbf{m}_0^{202}), g^j(\mathbf{l}_0^{202})\right): j \in \{0, 1, \dots, 5\}\right \}$$ 
and the group generated by $g$ acts transitively on $S_{202}$. 
So $(p,q)$-Dehn filling with respect to a member of $S_{202}$ is isometric to the $(p,q)$-Dehn filling with respect to another member of $S_{202}$. This would also imply the following fact. 
\begin{fact}\label{Berge_nonnegativeDehn}
A $(p,q)$-Dehn filling with respect to a pair in $S_{202}$ is isometric to some $(p',q')$-Dehn filling with respect to the same pair where both $p'$ and $q'$ are non-negative. 
\end{fact}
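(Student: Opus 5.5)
The plan is to use the order-$6$ isometry $g$ to move any slope into a fixed "sector" of the hexagonal lattice $\Lambda_0^{202}$. Since $g$ preserves cusp $0$ of \texttt{Manifold(`m202')}, it carries the slope $p\mathbf{m}_0^{202}+q\mathbf{l}_0^{202}$ to its image slope. The resulting $(p,q)$-filling is therefore isometric to the filling along $g^j(p\mathbf{m}_0^{202}+q\mathbf{l}_0^{202})$, expressed again in the basis $(\mathbf{m}_0^{202},\mathbf{l}_0^{202})$. Orientation preservation is what keeps the filled orbifold isometric rather than merely homeomorphic, and Dehn filling only depends on the unoriented slope. So the task reduces to showing that, for any $(p,q)$, some power $g^j$ (possibly combined with $\pm$) lands in the closed quadrant $p',q'\ge 0$.

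First I would compute the action of $g$ on coordinates. From $g(\mathbf{m})=\mathbf{m}-\mathbf{l}$ and $g(\mathbf{l})=\mathbf{m}$ (writing the group additively), we get
\[ g(p\mathbf{m}+q\mathbf{l}) = p(\mathbf{m}-\mathbf{l}) + q\mathbf{m} = (p+q)\mathbf{m} - p\,\mathbf{l}. \]
Thus $g$ acts by $(p,q)\mapsto(p+q,-p)$, and iterating gives
\[ (p,q)\mapsto(p+q,-p)\mapsto(q,-p-q)\mapsto(-p,-q)\mapsto(-p-q,p)\mapsto(-q,p+q)\mapsto(p,q). \]
This confirms that $g$ has order $6$ and that $g^3=-1$ on the lattice.

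Next, since $(p,q)$ and $(-p,-q)$ give the same filling, it suffices to consider the three classes $(p,q)$, $(p+q,-p)$, $(q,-p-q)$ up to sign. I would split by the signs of $p$, $q$, $p+q$:
\begin{itemize}
\item If $p,q$ have the same sign, then $\pm(p,q)$ already lies in the quadrant.
\item If $p\ge 0\ge q$ and $p+q\ge 0$, then $(p+q,-p)$ has first entry $\ge 0$ and second entry $\le 0$. That is the wrong sign pattern, so instead I use $g^{-1}=g^5\colon(p,q)\mapsto(-q,p+q)$, which gives $-q\ge 0$ and $p+q\ge 0$.
\item If $p\ge 0\ge q$ and $p+q\le 0$, then $g^{4}(p,q)=(-p-q,p)$ has both entries $\ge 0$.
\item The case $p\le 0\le q$ is the negative of the previous two cases, so it follows by the $\pm$ symmetry.
\end{itemize}
Geometrically, this says that the six images of the first quadrant under $\langle g\rangle$ cover the whole lattice. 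This matches the hexagonal picture: in the hexagonal lattice with acute basis, the first quadrant is a $120^\circ$ sector, and three of them together with their negatives tile the plane.

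The only genuine subtlety, and the step I would be careful about, is the claim that an orientation-preserving isometry mapping the cusp to itself induces an isometry between the corresponding filled orbifolds. This also needs to hold when $\gcd(p,q)=d>1$, and when the filling is hyperbolic. I would justify this via the extension of the isometry across the attached solid (orbi-)torus, or equivalently via Thurston's description of the completion $\widehat{M_{\vec z}}$ being determined by the slope, as cited from \cite[Proposition E.6.27]{BePe}. The statement that the pair $(g^j(\mathbf{m}_0^{202}),g^j(\mathbf{l}_0^{202}))$ is again in $S_{202}$ is what makes the phrase "with respect to the same pair" harmless.
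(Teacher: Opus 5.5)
Your proposal is correct and follows the paper's route: the paper also uses the order-$6$ orientation-preserving isometry $g$ fixing cusp $0$ and its transitive action on $S_{202}$, leaving implicit the coordinate bookkeeping $(p,q)\mapsto(p+q,-p)$ and the sector case analysis that you carry out explicitly and correctly. One small slip is in your geometric aside: the angle between $\mathbf{m}_0^{202}$ and $\mathbf{l}_0^{202}$ is acute and $g$ acts as a rotation by $60^\circ$, so the cone $p,q\ge 0$ is a $60^\circ$ sector (it is the mixed-sign quadrants that are $120^\circ$), and its six images under $\langle g\rangle$ tile the plane; this does not affect your case analysis.
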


We now look at the cusp triangulation of the knotted cusp of the Berge manifold as shown in the left of Figure \ref{link_pics}. This is cusp $1$ of the triangulation $$\texttt{`eLMkbbdddemdxi\_baBddIaBBrt'}.$$ Call this cusp $c_1$. The cusp triangulation of $c_1$ is given in Figure \ref{cusp1_Berge}. Let us denote the blue directed edge and the red directed edge in Figure \ref{cusp1_Berge} by $\mathbf{l}_1$ and $\mathbf{m}_1$ respectively. 
Note that  $(\mathbf{m}_1,\mathbf{l}_1)$ is a hexagonal acute angled oriented pairs of generators of the corresponding cusp group. One can check from SnapPy \cite{snappy} that there is an isomorphism from the triangulation \texttt{Manifold(`m202')} to the triangulation 
\texttt{`eLMkbbdddemdxi\_baBddIaBBrt'} (corresponding to the link drawn in SnapPy as shown in left of Figure \ref{link_pics}) sending cusp $0$ to cusp $1$ (and cusp $1$ to cusp $0$). So, $(\mathbf{m}_1, \mathbf{l}_1)$ belongs to $S_{202}$. Therefore, Fact \ref{sym_fact} and \ref{Berge_nonnegativeDehn} imply that we only need to study the $(p,q)$-Dehn fillings on the knotted cusp, where both $p$ and $q$ are non-negative, with respect to $(\mathbf{m}_1, \mathbf{l}_1)$ to prove Theorem \ref{berge_arith_thm}. 


The holonomy derivatives of $\mathbf{m}_1$ and $\mathbf{l}_1$ can be computed from Figure \ref{cusp1_Berge} as follows 
\begin{align*}
\mu(\mathbf{m}_1)=\frac{\zeta_1(z_2)}{\zeta_1(z_3)}, \qquad \mu(\mathbf{l}_1)=\frac{z_3}{z_2}.
\end{align*}
%
This means that the $(p,q)$-Dehn filling on the $c_1$ cusp with respect to the generating pair $(\mathbf{m}_1, \mathbf{l}_1)$ renders the Dehn filling equation 
\begin{equation}
\left(\frac{1-z_3}{1-z_2}\right)^p \left(\frac{z_3}{z_2}\right)^q=1.\label{Bdehneq} 
\end{equation}

We should note here that at the complete structure, the holonomy equation for cusp $c_1$ is $\mu(\mathbf{l}_1)=\frac{z_3}{z_2}=1$ as well. This equation along with equations \ref{Be0}, \ref{Be1} and \ref{Bcomhol} give us the following fact, which can be checked from SnapPy \cite{snappy} as well (as mentioned in the beginning of Subsection \ref{berge_gluing}). 
\begin{fact}\label{Berge_full_complete}
At the complete structure, i.e. when $M_{(p,q)}=M_{\infty}=M$, we have, $z_j=\frac{1+\sqrt{3}\mathrm{i}}{2}$ for each $j=0,1,2,3$. 
\end{fact}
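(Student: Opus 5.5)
The statement to prove is Fact \ref{Berge_full_complete}: at the complete structure every shape parameter equals $\frac{1+\sqrt{3}\mathrm{i}}{2}$. I will solve the system given by the edge equations \ref{Be0}, \ref{Be1}, the holonomy equation \ref{Bcomhol} for $c_0$, and the holonomy equation $z_3=z_2$ for $c_1$. Then I will single out the geometric solution, meaning the one with every $z_j$ in the upper half plane.

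First I substitute $z_0=z_1$ and $z_2=z_3$. In the symmetric coordinates of Section \ref{Berge V0 parametrization} this gives $\sigma_1=2z_1$, $\sigma_2=z_1^2$, $\sigma_1'=2z_2$ and $\sigma_2'=z_2^2$. The parametrization \ref{sigma_1primeberge} and \ref{sigma_2primeberge} of $\mathcal{U}_0(M)$ already encodes \ref{Be0}, \ref{Be1} and \ref{Bcomhol}. It remains to impose the extra condition that $z_2=z_3$. Equivalently, $\sigma_1'^2=4\sigma_2'$, i.e. the discriminant of the quadratic with roots $z_2,z_3$ vanishes. From \ref{sigma_2primeberge} we have $z_2=\pm\frac{z_1-1}{z_1}$.

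Case $z_2=\frac{z_1-1}{z_1}=\zeta_2(z_1)$: then \ref{sigma_1primeberge} gives
\[
-z_1^2+2z_1+1-\frac{2}{z_1}+\frac{1}{z_1^2}=2-\frac{2}{z_1}.
\]
Clearing denominators, this becomes $z_1^4-2z_1^3+z_1^2-1=0$, which factors as $(z_1^2-z_1-1)(z_1^2-z_1+1)=0$. The first factor has real roots, and a real shape parameter is excluded because tetrahedra are non-flat. So $z_1^2-z_1+1=0$, and with positive imaginary part $z_1=\frac{1+\sqrt3\mathrm{i}}{2}$. Then $z_2=\zeta_2(z_1)=\frac{1+\sqrt3\mathrm{i}}{2}$ as well, since $\zeta_2$ fixes this point.

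Case $z_2=-\frac{z_1-1}{z_1}$: the analogous equation becomes $z_1^4-2z_1^3-z_1^2+4z_1-1=0$ (up to sign), which I will factor or analyze. I expect each of its roots to give some $z_j$ with non-positive imaginary part, because $-\zeta_2(z_1)$ lies in the lower half-plane whenever $\zeta_2(z_1)$ lies in the upper one. This sign observation is the cleanest way to discard the case entirely, with no root computation needed. That leaves the unique geometric solution $z_0=z_1=z_2=z_3=\frac{1+\sqrt3\mathrm{i}}{2}$. This is consistent with the regular ideal triangulation reported by SnapPy \cite{snappy} and with uniqueness of the complete structure (Mostow rigidity).

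The main obstacle is correctly discarding the spurious branches. Mostow rigidity ensures the geometric point is unique once found. I will therefore check directly that the candidate satisfies \ref{Be0} and \ref{Be1}. Both sides equal $\omega^2$, where $\omega=\frac{1+\sqrt3\mathrm{i}}{2}$, since $\zeta_2(\omega)=\omega$.
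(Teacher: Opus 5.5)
Your proposal is correct and is essentially the paper's argument: impose $z_0=z_1$ and $z_2=z_3$ on \ref{Be0} and \ref{Be1} and pick out the solution with all shapes in the upper half-plane. Your case split $z_2=\pm\zeta_2(z_1)$, together with the observation that $\zeta_2$ preserves the upper half-plane, supplies the details the paper leaves to the reader and to SnapPy. One harmless slip: in the second case the quartic is $z_1^4-2z_1^3-3z_1^2+4z_1-1=(z_1^2-3z_1+1)(z_1^2+z_1-1)$, whose roots are all real, not the one you wrote; your sign argument discards that case anyway.
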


Using the approach in the proof of \cite[Theorem 6.3]{NeumReid} and \cite[Proposition 6.6] {CDHMMW}, we will now show the existence a unit non-rational algebraic integer in the cusp field of the one-cusped hyperbolic Dehn fillings $M_{(p,q)}$. In particular, our goal here will be to find a polynomial in $z_1$ with both the leading coefficient and the constant term equal to $\pm 1$ such that the corresponding polynomial equation is equivalent to the $(p,q)$-Dehn filling equation above. We will first need to convert the Dehn filling equation in terms of $\sigma_1'$ and $\sigma_2'$ following the technique introduced in \cite{CDHMMW}. Since $\sigma_1'$ and $\sigma_2'$ can both be written as Laurent series in $z_1$ as in Equation \ref{BergesigmaLaurent}, the proposition below will follow. As in \cite{CDHMMW}, we will use the notations $g_{(p,q)}$, $h_{(p,q)}$ and $h'_{(p,q)}$ below. 


\begin{prop}\label{Balgint}

Let $(p,q)$ be a pair of non-negative integers. Then any $z_1$ corresponding to a $(p,q)$ hyperbolic Dehn filling on cusp $c_1$ of the Berge manifold along  $(\mathbf{m}_1, \mathbf{l}_1)$ is a unit non-rational algebraic integer. 
\end{prop}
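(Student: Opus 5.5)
We will follow the strategy of \cite[Proposition 6.6]{CDHMMW}. The idea is to rewrite the Dehn filling equation \ref{Bdehneq} as a symmetric polynomial equation in $(z_2,z_3)$, push it down to the variables $(\sigma_1',\sigma_2')$, and then pull it back to $z_1$ via $\Psi$ from Equation \ref{BergesigmaLaurent}. Finally we check that the resulting Laurent polynomial in $z_1$ has extreme coefficients $\pm1$.

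\textbf{Step 1: reduce to a symmetric polynomial.} Put $f(X)=(1-X)^pX^q\in\Z[X]$ and $n=p+q$. Equation \ref{Bdehneq} is equivalent to $f(z_3)=f(z_2)$, and we first argue that $z_2\neq z_3$ at a filling point. If $z_2=z_3$, then $\mu(\mathbf{l}_1)=z_3/z_2=1$ and $\mu(\mathbf{m}_1)=\zeta_1(z_2)/\zeta_1(z_3)=1$, so both holonomies of $c_1$ are trivial. Together with \ref{Bcomhol}, this means the structure is the complete one of Fact \ref{Berge_full_complete}, not a hyperbolic filling. Hence we may divide by $z_3-z_2$, and the equation becomes $g_{(p,q)}(z_2,z_3)=0$ with
\[
g_{(p,q)}(u,v)=\frac{f(u)-f(v)}{u-v}.
\]
This $g_{(p,q)}$ is a symmetric integer polynomial of total degree $n-1$. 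Expanding $f$ in monomials and using $u^{k}-v^{k}=(u-v)s_{k-1}(u,v)$ together with \cite[Lemma 6.5]{CDHMMW}, we write $g_{(p,q)}=\phi^\ast(h_{(p,q)})$ for some $h_{(p,q)}\in\Z[\sigma_1',\sigma_2']$. Its top monomial in $\sigma_1'$ is $(-1)^p\sigma_1'^{\,n-1}$, coming from $(-1)^p t_{n-1}$, and every other monomial $\sigma_1'^a\sigma_2'^b$ satisfies $a<n-1$.

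\textbf{Step 2: pull back to $z_1$.} Substitute $\sigma_1'=-z_1^2+2z_1+1-2z_1^{-1}+z_1^{-2}$ and $\sigma_2'=1-2z_1^{-1}+z_1^{-2}$ to obtain a Laurent polynomial $h'_{(p,q)}(z_1)\in\Z[z_1,z_1^{-1}]$.

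\emph{Highest power.} Since $\sigma_2'$ has no positive powers of $z_1$, the highest power $z_1^{2(n-1)}$ comes only from $(-1)^p\sigma_1'^{\,n-1}$. Its coefficient is therefore $(-1)^{p}(-1)^{n-1}=\pm1$.

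\emph{Lowest power.} Here I would use an asymptotic argument rather than bookkeeping monomials. As $z_1\to0$ we have $\sigma_1'\sim\sigma_2'\sim z_1^{-2}=:s$. The corresponding $u,v$ are the roots of $X^2-\sigma_1'X+\sigma_2'$, so $u\sim s$ and $v\to 1$ (more precisely $v=\sigma_2'/u$ stays bounded). Then $f(v)$ stays bounded, while $f(u)\sim(-1)^pu^{n}$. Hence $g\sim(-1)^pu^{n-1}\sim(-1)^ps^{n-1}$, so the coefficient of $z_1^{-2(n-1)}$ in $h'_{(p,q)}$ is $\pm1$. This is the step I expect to need the most care. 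One must check that the cancellation is genuinely absent, i.e.\ that the expansion of $u$ in powers of $z_1$ has leading term exactly $z_1^{-2}$ with no competing contribution from the bounded root $v$. One must also treat the degenerate cases $p=0$ or $q=0$ separately: when $p=0$ we have $f(1)=1$ rather than $0$, but this is still bounded, so the argument goes through. If the asymptotic argument is shaky, the fallback is to compute directly the sum of the coefficients of $h_{(p,q)}$ on the monomials with maximal $a+b$.

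\textbf{Step 3: conclude.} Set $P(z_1)=z_1^{2(n-1)}h'_{(p,q)}(z_1)$. This is an integer polynomial of degree $4(n-1)$ with leading coefficient and constant term both $\pm1$, and $z_1$ is a root of it. Dividing by the leading coefficient shows that $z_1$ is an algebraic integer; the constant term $\pm1$ shows that $z_1^{-1}$ is also integral, so $z_1$ is a unit. Finally, $z_1$ is the shape parameter of a positively oriented tetrahedron, so it lies in the upper half-plane and is not real; in particular it is not rational.
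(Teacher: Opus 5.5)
Your proposal is correct and follows the paper's own proof: discard $z_2=z_3$, reduce Equation \ref{Bdehneq} to $g_{(p,q)}=0$, write $g_{(p,q)}=\phi^\ast(h_{(p,q)})$ using the $t_d$, pull back by $\Psi$, and observe that $z_1^{2(p+q-1)}\Psi^\ast(h_{(p,q)})$ is an integer polynomial of degree $4(p+q-1)$ whose leading coefficient and constant term are both $\pm1$. Your asymptotic argument for the constant term is sound, but the bookkeeping fallback is immediate: each $t_d$ is weighted homogeneous, with $\sigma_1'$ of weight $1$ and $\sigma_2'$ of weight $2$, so every monomial $\sigma_1'^a\sigma_2'^b$ of $h_{(p,q)}$ satisfies $a+b\le a+2b\le p+q-1$, with equality only for $(-1)^p\sigma_1'^{\,p+q-1}$, and $\Psi^\ast(\sigma_1'^a\sigma_2'^b)$ has lowest term $z_1^{-2(a+b)}$ with coefficient $1$.
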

\begin{proof}
Recall functions $s_d$ and $t_d$ of \cite{CDHMMW} from Subsection \ref{onecusp622}. Below, we will use them to denote respectively $s_d(z_2,z_3)$ and $t_d(\sigma_1',\sigma_2')$. We take $g_{(p,q)}$ and $h_{(p,q)}$ as below
\begin{align*}
g_{(p,q)}=\sum_{j=0}^{p} \begin{pmatrix}p\\ j \end{pmatrix} (-1)^j s_{q+j-1}\quad, \quad h_{(p,q)}=\sum_{j=0}^{p} \begin{pmatrix}p\\ j \end{pmatrix} (-1)^j t_{q+j-1}. 
\end{align*}

Equation \ref{Bdehneq} is equivalent to the equation $z_2^q (1-z_2)^p- z_3^q (1-z_3)^p=0$, which is equivalent to $(z_2-z_3) g_{(p,q)}=0$. Since $z_2=z_3$ corresponds to the complete structure at cusp $c_1$, we can take $g_{(p,q)}=0$ as the $(p,q)$-Dehn filling equation. One should note that $\phi^{\ast} (h_{(p,q)})=g_{(p,q)}$. Recalling $\Psi$ from Equation \ref{BergesigmaLaurent}, we obtain, $\Psi^{\ast} (h_{(p,q)})=z_1^{-2(p+q-1)}h'_{(p,q)}$ where $h'_{(p,q)}$ is a polynomial in $\Z[z_1]$ with leading term $\pm z_1^{4(p+q-1)}$ and constant term $\pm 1$. So, $z_1$ is a non-rational unit algebraic integer. 


\end{proof}

We will now use Proposition \ref{Balgint} to prove Theorem \ref{berge_arith_thm} where we show that no $M_{(p,q)}$ is arithmetic. We follow the approach taken in \cite[Corollary 6.14]{CDHMMW} for this proof. More specifically, we show that Proposition \ref{Balgint} obstructs the cusp field $\mathbb{Q}(z_1)$ from becoming quadratic imaginary. This will allow us to leverage Neumann and Reid's criterion \cite[Proposition 4.4(a)]{NeumReid} for arithmeticity in probing which $M_{(p,q)}$'s are arithmetic, as we will see below.  
\begin{thm}\label{berge_arith_thm}
\bergearith
\end{thm}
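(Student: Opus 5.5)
The plan is to argue by contradiction. Suppose some one-cusped hyperbolic filling $M_{(p,q)}$ is arithmetic.

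\emph{Reductions.} By Fact \ref{sym_fact} I may assume the filling is on the knotted cusp $c_1$. By Fact \ref{Berge_nonnegativeDehn}, together with the observation that $(\mathbf{m}_1,\mathbf{l}_1)\in S_{202}$, I may further assume the filling is taken with respect to $(\mathbf{m}_1,\mathbf{l}_1)$ with $p,q\geq 0$.

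\emph{Arithmeticity forces a quadratic imaginary cusp field.} Since $M_{(p,q)}$ is a non-compact arithmetic orbifold, \cite[Proposition 4.4(a)]{NeumReid} says its invariant trace field is quadratic imaginary. \cite[Proposition 2.7]{NeumReid} places the cusp field inside the invariant trace field, and the cusp field is not $\mathbb{Q}$ because a cusp modulus lies in the upper half plane. So the cusp field is quadratic imaginary. (Alternatively, Theorem \ref{Bcusptrace} gives equality of the two fields directly.) By Fact \ref{Bfilledshape} the cusp field is $\Q(z_1)$, where $(z_0,z_1,z_2,z_3)\in\mathcal{V}_0(M)$ is the point for this filling. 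Proposition \ref{Balgint} says $z_1$ is a unit non-rational algebraic integer. Since $z_1$ is a geometric shape parameter, it has positive imaginary part. Fact \ref{quad_unit} then leaves only three candidates:
\[
z_1 = \mathrm{i}, \qquad z_1 = \tfrac{1+\sqrt{3}\mathrm{i}}{2}, \qquad z_1 = \tfrac{-1+\sqrt{3}\mathrm{i}}{2}.
\]

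\emph{Excluding the candidates.} For each candidate I compute $\sigma_1'$ and $\sigma_2'$ from \eqref{sigma_1primeberge} and \eqref{sigma_2primeberge}. I then recover $\{z_2,z_3\}$ as the roots of $t^2-\sigma_1' t+\sigma_2'$. I expect each case to fail in one of three ways:
\begin{itemize}
\item $z_2=z_3$, which gives the complete structure rather than a genuine filling;
\item some shape has non-positive imaginary part, so the point is not a geometric structure;
\item the Dehn filling equation \eqref{Bdehneq} has no solution with $p,q\ge0$ not both zero.
\end{itemize}

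\begin{itemize}
\item For $z_1=e^{\pi\mathrm{i}/3}$: here $\zeta_2(z_1)=e^{\pi\mathrm{i}/3}$, so $\sigma_2'=e^{2\pi\mathrm{i}/3}$. Also $\sigma_1'=1+\sqrt{3}\mathrm{i}=2e^{\pi\mathrm{i}/3}$. Hence $t^2-\sigma_1't+\sigma_2'$ has the double root $e^{\pi \mathrm{i}/3}$, so $z_2=z_3$. With Fact \ref{Berge_full_complete}, this is exactly the complete structure of $M$, where $c_1$ is an unfilled cusp, not a one-cusped filling.
\item For $z_1=\mathrm{i}$ and $z_1=e^{2\pi\mathrm{i}/3}$: I will compute $z_2$ and $z_3$ explicitly. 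I will first check whether both lie in the upper half plane. If they do, I will show \eqref{Bdehneq} fails by comparing moduli. The plan is to show $|\mu(\mathbf{l}_1)|$ and $|\mu(\mathbf{m}_1)|$ cannot satisfy $|\mu(\mathbf{m}_1)|^p|\mu(\mathbf{l}_1)|^q=1$ with $p,q\ge0$ unless both moduli are $1$. In that remaining case I compare arguments, using that $pu+qv=2\pi\mathrm{i}$ determines $(p,q)$ uniquely.
\end{itemize}

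\emph{Expected obstacle.} The main difficulty is this last step for $z_1=\mathrm{i}$ and $z_1=e^{2\pi\mathrm{i}/3}$. These points might be genuine but non-geometric, or geometric with generalized Dehn surgery coefficients that are non-integral or have a sign not allowed. Pinning this down requires the explicit numerical and algebraic computation. If the modulus argument is inconclusive, I would fall back on evaluating $h'_{(p,q)}$ at the candidate $z_1$ and showing it is nonzero, for instance via reduction modulo a suitable prime in $\Z[\mathrm{i}]$ or $\Z[e^{2\pi \mathrm{i}/3}]$.
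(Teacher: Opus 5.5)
Your proposal is correct and follows the paper's proof essentially step for step. It uses the same reductions via Facts \ref{sym_fact} and \ref{Berge_nonnegativeDehn}, the same passage from arithmeticity to a quadratic imaginary cusp field $\Q(z_1)$ via \cite[Proposition 4.4(a)]{NeumReid}, Proposition \ref{Balgint} and Fact \ref{quad_unit}, and the same case-by-case exclusion of $z_1\in\{\mathrm{i},\tfrac{\pm1+\sqrt{3}\mathrm{i}}{2}\}$.

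Your planned modulus comparison does close the two remaining cases. There $\operatorname{Re} z_2=\operatorname{Re} z_3=\tfrac12$, so $|1-z_j|=|z_j|$ and $|\mu(\mathbf{m}_1^p\mathbf{l}_1^q)|=|z_3/z_2|^{p+q}$ with $|z_3/z_2|\neq 1$, which forces $p=q=0$. This also handles $z_1=\tfrac{-1+\sqrt{3}\mathrm{i}}{2}$, where one of $z_2,z_3$ lies in the lower half-plane, without relying on your weaker ``non-geometric'' exclusion.
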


\begin{proof}

Let $O$ be a hyperbolic orbifold obtained by Dehn filling a single cusp of the Berge manifold. Using Fact \ref{sym_fact} and \ref{Berge_nonnegativeDehn}, we can assume that $O$ is obtained by $(p,q)$-Dehn filling on cusp $c_1$ along $(\mathbf{m}_1, \mathbf{l}_1)$ where both $p$ and $q$ are non-negative integers. Since $\tau=\zeta_2(z_1)$ is a cusp moduli of the corresponding hyperbolic Dehn filling (see the paragraph before Fact \ref{Bfilledshape}), $z_1$ lies in the upper-half plane as well. Suppose the cusp field $\Q(z_1)$ is quadratic imaginary. Then since $z_1$ is unit non-rational algebraic integer by Proposition \ref{Balgint}, $z_1$ is either $\mathrm{i}$ or $\frac{1+\sqrt{3}\mathrm{i}}{2}$ or $\frac{-1+\sqrt{3}\mathrm{i}}{2}$ (see Fact \ref{quad_unit}). Now, $z_1=\frac{1+\sqrt{3}\mathrm{i}}{2}$ can be ruled out as it corresponds to the complete structure (see Fact \ref{Berge_full_complete}). One can check that if $z_1$ is $\mathrm{i}$, then $\sigma_1'=4\mathrm{i}+1$, $\sigma_2'=2\mathrm{i}$, and consequently, 
$$\left \{z_2, z_3 \right \} =\left\{\frac{1+(4+\sqrt{15})\mathrm{i}}{2},  \frac{1+(4-\sqrt{15})\mathrm{i}}{2}\right \}.$$
Now one can verify that these values of $z_2$ and $z_3$ satisfy Equation \ref{Bdehneq} for non-negative integers $p$ and $q$ only if $p=q=0$. So, $z_1$ can't be $\mathrm{i}$. 

If $z_1$ is $\frac{-1+\sqrt{3}\mathrm{i}}{2}$, then $\sigma_1'=3 \sqrt{3}\mathrm{i}+1$ and $\sigma_2'=\frac{3+3\sqrt{3}\mathrm{i}}{2}$, and therefore, 
$$\left \{z_2, z_3 \right \} =\left\{\frac{1+(3\sqrt{3}+4\sqrt{2})\mathrm{i}}{2}, \frac{1+(3\sqrt{3}-4\sqrt{2})\mathrm{i}}{2}\right \}.$$ 
Similarly, as in the case above, one can check that for non-negative integers $p$ and $q$, Equation \ref{Bdehneq} is not satisfied for these values of $z_2$ and $z_3$ unless $p=q=0$. So, $z_1\ne \frac{-1+\sqrt{3}\mathrm{i}}{2}$. This proves the theorem. 

\end{proof}

\section{Parametrization of $\mathcal{V}_0(N)$ and Proof of Theorem \ref{tracecusp_pret}}\label{pretzel V0 parametrization}
We will now focus on obtaining a one variable parametrization of $\mathcal{V}_0(N)$, which has dimension one. Note that equations \ref{Pe2} and \ref{Pretzel_hol_eq} give us 
\begin{equation}
z_3=\zeta_2(z_1) \zeta_1(z_0). \label{z3eq_pret}
\end{equation}
As in the Berge manifold case, motivated by the work in \cite{CDHMMW}, we will use $\sigma_1$, $\sigma_2$, $\sigma_1'$  and $\sigma_2'$ to denote $z_0+z_1$, $z_0 z_1$, $z_2+z_3$ and $z_2 z_3$ respectively. Using equations \ref{Pretzel_hol_eq} and \ref{z3eq_pret}, we see that 
\begin{align}\label{sigma1peq1}
\sigma_1'=\frac{z_0-1}{z_0(1-z_1)}+\frac{z_1-1}{z_1(1-z_0)}=-\frac{z_1(1-z_0)^2+z_0(1-z_1)^2}{z_0 z_1(1-z_0)(1-z_1)}=\frac{-\sigma_1+4\sigma_2-\sigma_2 \sigma_1}{\sigma_2(1-\sigma_1+\sigma_2)}.
\end{align}

Equation \ref{Pe2} implies that $\sigma_2'=\sigma_2^{-1}$. On the other hand, Equation \ref{Pe0} gives us $\sigma_2 -\sigma_1=\sigma_2'-\sigma_1'$. This means
\begin{align}\label{sigma1peq2}
\sigma_1'=\sigma_1-(\sigma_2 -\sigma_2')=\sigma_1-(\sigma_2-\sigma_2^{-1})=\frac{\sigma_1 \sigma_2 -\sigma_2^2+1}{\sigma_2}
\end{align}

Equating $\sigma_1'$ from equations \ref{sigma1peq1} and \ref{sigma1peq2} gives us
\begin{align}
\sigma_1 \sigma_2 -\sigma_2^2+1&=\frac{-\sigma_1 +4 \sigma_2-\sigma_1 \sigma_2}{1-\sigma_1+\sigma_2}\nonumber\\
\implies \sigma_2(\sigma_1-\sigma_2)+1&=\frac{-1+3\sigma_2-\sigma_1 \sigma_2}{1-\sigma_1+\sigma_2}+1\nonumber\\
\implies \sigma_2(\sigma_1-\sigma_2)&=\frac{-1+3\sigma_2-\sigma_1 \sigma_2}{1-\sigma_1+\sigma_2}. \label{sigmarel_pret}
\end{align}

Using equations \ref{Pe0}, \ref{Pretzel_hol_eq} and \ref{z3eq_pret}, we also see that 
\begin{align}
 \frac{1-z_0 z_1}{z_0(1-z_1)} \frac{1-z_0 z_1}{z_1(1-z_0)}&=(1-z_0)(1-z_1)\nonumber\\
\implies (1-\sigma_2)^2&=\sigma_2(1-\sigma_1+\sigma_2)^2. \label{sigmarel_2_pret}
\end{align}

One can check that equations \ref{sigmarel_pret} and \ref{sigmarel_2_pret} are equivalent. Define $\mathcal{U}_0(N)$ to be the set of $2$-tuples $(\sigma_1, \sigma_2)$ in $(\mathbb{C}-\{0,1\})^2$ which satisfy Equation \ref{sigmarel_2_pret}. Observe that the map $\phi$ sending $(z_0,z_1)$ to $(z_0+z_1,z_0z_1)$ is a regular map from $\mathcal{V}_0(N)$ to $\mathcal{U}_0(N)$. 
%
Let's turn our attention to computing a cusp parameter function on $\mathcal{V}(N)$ following \cite[Proposition 1.6]{CDM}. We choose the red directed edge and blue directed edge in Figure \ref{cusp1_pretzel} to denote $\mathbf{m}_1$ and $\mathbf{l}_1$ respectively. Note that $(\mathbf{m}_1, \mathbf{l}_1)$ is an oriented generating pair for the first homology of the cusp torus corresponding to the knotted cusp of $\mathbb{S}^3-\texttt{L13n5885}$. We now take the violet directed edge in Figure \ref{cusp1_pretzel} as the reference edge and apply \cite[Proposition 1.6]{CDM} to see that 
\begin{align*}
\tau(\mathbf{m}_1)&=z_3-z_3\zeta_2(z_3) \zeta_1(z_0) \zeta_1(z_1)\zeta_2(z_2)+z_3\zeta_2(z_3) \zeta_1(z_0) \zeta_1(z_1)\zeta_2(z_2)\zeta_1(z_2) \zeta_2(z_0),\\
\tau(\mathbf{l}_1)&=z_3\zeta_2(z_2)-z_3\zeta_2(z_2) \zeta_1(z_2) \zeta_2(z_3) \zeta_1(z_1)\zeta_2(z_0) \zeta_1(z_3).
\end{align*}

If we denote the cusp moduli corresponding to the oriented pair $(\mathbf{m}_1, \mathbf{l}_1)$ for the knotted cusp of $\mathbb{S}^3-\texttt{L13n5885}$ by $\tau_{\mathcal{V}}$, then the above equations along with equations \ref{Pretzel_hol_eq} and \ref{z3eq_pret} imply that 
\begin{align*}
\tau_{\mathcal{V}}=\frac{\tau(\mathbf{l}_1)}{\tau(\mathbf{m}_1)}&=\frac{(z_0^2 z_1+z_0 z_1^2-3z_0 z_1 +1)(z_0-1)(z_1-1)}{z_0^2 z_1^2 -3z_0^2z_1-3z_0z_1^2+z_0^2+6z_0 z_1+z_1^2-z_0-z_1-1}.\nonumber\\
\end{align*}
\begin{lemma}\label{pret_birational}
There is a birational map $\tau: \mathcal{U}_0(N) \to \mathbb{C}$ such that $\phi^{\ast}(\tau)=\tau_{\mathcal{V}}$. 
\end{lemma}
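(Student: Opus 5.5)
The plan is to carry this out in two stages: first push $\tau_{\mathcal{V}}$ down to a rational function of $(\sigma_1,\sigma_2)$, and then show that this function, restricted to the curve $\mathcal{U}_0(N)$, has degree one.

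\emph{Stage 1: descent to $(\sigma_1,\sigma_2)$.} The formula for $\tau_{\mathcal{V}}$ obtained above is visibly symmetric in $z_0$ and $z_1$. Its numerator factors as
$$(z_0^2z_1+z_0z_1^2-3z_0z_1+1)(z_0-1)(z_1-1)=(\sigma_1\sigma_2-3\sigma_2+1)(1-\sigma_1+\sigma_2).$$
For the denominator I would use $z_0^2+z_1^2=\sigma_1^2-2\sigma_2$, which gives
$$\sigma_2^2-3\sigma_1\sigma_2+\sigma_1^2+4\sigma_2-\sigma_1-1.$$
So I define the rational function
$$\tau(\sigma_1,\sigma_2)=\frac{(\sigma_1\sigma_2-3\sigma_2+1)(1-\sigma_1+\sigma_2)}{\sigma_2^2-3\sigma_1\sigma_2+\sigma_1^2+4\sigma_2-\sigma_1-1}$$
on $\mathcal{U}_0(N)$. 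By construction $\phi^{\ast}(\tau)=\tau_{\mathcal{V}}$. I also need to check that the denominator does not vanish identically on $\mathcal{U}_0(N)$; evaluating at the complete structure, where the shapes lie in $\{\mathrm{i},1+\mathrm{i},\frac{1+\mathrm{i}}{2}\}$, should suffice.

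\emph{Stage 2: birationality.} Next I show that $\mathcal{U}_0(N)$ is a rational curve on which $\tau$ has degree one. Put $\sigma_2=s^2$ in Equation \ref{sigmarel_2_pret}. It becomes $(1-s^2)^2=s^2(1-\sigma_1+s^2)^2$, so $1-\sigma_1+s^2=\pm\frac{1-s^2}{s}$. Replacing $s$ by $-s$ switches the sign, so without loss of generality
$$\sigma_2=s^2,\qquad \sigma_1=1+s^2-\frac{1-s^2}{s}.$$
This is a rational map $\C\dashrightarrow\mathcal{U}_0(N)$. It is generically injective, because $s=\frac{1-\sigma_2}{1-\sigma_1+\sigma_2}$ recovers $s$ rationally from $(\sigma_1,\sigma_2)$. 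Hence $\mathcal{U}_0(N)$ is birational to $\C$ via $s$, and it is irreducible.

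It then remains to show that $\tau$, written as a function of $s$, is a Möbius transformation of $s$; equivalently, that $s$ can be expressed rationally in $\tau$. I would substitute the parametrization into $\tau$ and clear the powers of $s$. The factor $1-\sigma_1+\sigma_2$ becomes $\frac{1-s^2}{s}$, and I expect the numerator and denominator to share common factors such as $(1\pm s)$ and powers of $s$, leaving a ratio of linear polynomials in $s$.

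\emph{Main obstacle and fallback.} The main obstacle is this cancellation. If after cancelling the common factors $\tau(s)$ still has degree greater than one, I would fall back on a direct inversion. I would take $\tau\cdot(\text{denominator})=\text{numerator}$ together with Equation \ref{sigmarel_2_pret}, eliminate $\sigma_1$ using the resultant in $\sigma_1$, and check that the resulting relation between $\tau$ and $\sigma_2$ (or $s$) is linear in $\sigma_2$ (or $s$). The Sage computations used earlier in the paper would be used to confirm these factorizations.

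Finally, as a sanity check I would verify that at the complete structure the resulting $s$-value maps to a $\tau$ in the upper half-plane. This confirms that the rational inverse $\tau\mapsto(\sigma_1(\tau),\sigma_2(\tau))$ is defined on a Zariski-open set, which completes the birationality claim of Lemma \ref{pret_birational}.
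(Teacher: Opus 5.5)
Your proposal is correct and is essentially the paper's argument run in the opposite order: your parameter $s$, with $\sigma_2=s^2$, $\sigma_1=s^2+s+1-s^{-1}$ and rational inverse $s=\frac{1-\sigma_2}{1-\sigma_1+\sigma_2}$, is exactly the function the paper takes as $\tau$, with the same explicit inverse. The paper then checks, using Equations \ref{sigmarel_pret} and \ref{sigmarel_2_pret}, that this function agrees on $\mathcal{U}_0(N)$ with the symmetric pushdown of $\tau_{\mathcal{V}}$ that you start from. The cancellation you anticipate does go through and gives $\tau(s)=s$ exactly: after substitution the numerator is $(1-s^2)(s^4+s^3-2s^2-s+1)/s$ and the denominator is $(1-s^2)(s^4+s^3-2s^2-s+1)/s^2$, so no fallback is needed.
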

\begin{proof}
We first define the rational map $\tau: \mathcal{U}_0(N) \to \mathbb{C}$ as follows
$$\tau(\sigma_1, \sigma_2)=\frac{1-\sigma_2}{\sigma_2-\sigma_1+1}.$$
Consider the rational map from $\mathbb{C}$ to $\mathcal{U}_0(N)$ that sends $\tau$ to $(\tau^2+\tau+1-\tau^{-1}, \tau^2)$. 
One can now check that this rational map and $\tau$ are rational inverses of each other. One direction is straightforward and for the other we need to use Equation \ref{sigmarel_2_pret}\textemdash for the first component we need to apply $(2-\sigma_1)\left(\sigma_2(1-\sigma_1+\sigma_2)^2-(1-\sigma_2)^2\right)=0$ in the numerator and for the second component we need to observe that $\sigma_2=\frac{(1-\sigma_2)^2}{(1-\sigma_1+\sigma_2)^2}$. This makes $\tau$ a birational map. 
Now, by using Equation \ref{sigmarel_2_pret}, multiplying both the numerator and denominator by $(\sigma_2-\sigma_1)(\sigma_2-\sigma_1+1)$ and applying Equation \ref{sigmarel_pret}, we see that 
\begin{align*}
\tau(\sigma_1, \sigma_2)=\frac{\sigma_2(\sigma_2-\sigma_1+1)}{1-\sigma_2}&=\frac{\sigma_2(\sigma_2-\sigma_1)(\sigma_2-\sigma_1+1)^2}{\left((\sigma_2-\sigma_1)-\sigma_2(\sigma_2-\sigma_1)\right)(\sigma_2-\sigma_1+1)}\\
&=\frac{(\sigma_2 \sigma_1-3\sigma_2+1)(\sigma_2-\sigma_1+1)}{(\sigma_2-\sigma_1+1)^2-(\sigma_2-\sigma_1+1)+(3\sigma_2-1-\sigma_1 \sigma_2)}.
\end{align*}
Simplifying the above equation, we get, 
$$\tau(\sigma_1, \sigma_2)=\frac{(\sigma_2 \sigma_1-3 \sigma_2+1)(\sigma_2-\sigma_1+1)}{\sigma_2^2-3\sigma_2 \sigma_1+\sigma_1^2+4\sigma_2-\sigma_1-1}.$$
It is now straightforward to see that $\phi^{\ast}(\tau)=\tau_{\mathcal{V}}$. 
\end{proof}

This gives us the following fact. 
\begin{fact}\label{pretzel_cusp_fld}
If $(z_0,z_1,z_2,z_3) \in \mathcal{V}_0(N)$ corresponds to a hyperbolic orbifold obtained by some $(p,q)$-Dehn filling on the unknotted cusp of the $(-2,3,8)$-pretzel link complement, then the cusp field of that hyperbolic orbifold is $\mathbb{Q}(\tau)$ where $\tau=\frac{1-z_0 z_1}{(1-z_0)(1-z_1)}$. 
\end{fact}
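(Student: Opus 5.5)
The statement is Fact \ref{pretzel_cusp_fld}. I would deduce it from Lemma \ref{pret_birational} together with \cite[Proposition 1.6]{CDM}, after fixing which cusp is filled.

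\textbf{Which cusp is filled.} The subvariety $\mathcal{V}_0(N)$ was defined by requiring the knotted cusp $c_1'$ to be complete. A point of $\mathcal{V}_0(N)$ realizing a hyperbolic Dehn filling therefore fills the unknotted cusp. The remaining cusp of the filled orbifold is $c_1'$.

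\textbf{Step 1: identify a cusp modulus.} By \cite[Proposition 1.6]{CDM}, the rational function $\tau_{\mathcal{V}}=\tau(\mathbf{l}_1)/\tau(\mathbf{m}_1)$ computed above, evaluated at the given point $(z_0,z_1,z_2,z_3)$, is a cusp modulus of the surviving cusp for the oriented pair $(\mathbf{m}_1,\mathbf{l}_1)$. By the definition of the cusp field, the cusp field is then $\Q(\tau_{\mathcal{V}}(z_0,z_1,z_2,z_3))$.

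\textbf{Step 2: rewrite the modulus.} Lemma \ref{pret_birational} gives $\tau_{\mathcal{V}}=\phi^{\ast}(\tau)$. Evaluating, the modulus equals $\tau(\sigma_1,\sigma_2)=\frac{1-\sigma_2}{\sigma_2-\sigma_1+1}$ with $\sigma_1=z_0+z_1$ and $\sigma_2=z_0z_1$. Since $\sigma_2-\sigma_1+1=(1-z_0)(1-z_1)$, this is exactly $\frac{1-z_0z_1}{(1-z_0)(1-z_1)}$, as claimed.

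\textbf{Main obstacle: validity of the identity at the specific point.} The equality $\phi^{\ast}(\tau)=\tau_{\mathcal{V}}$ is only an identity of rational functions on $\mathcal{V}_0(N)$. In the proof of Lemma \ref{pret_birational} it was obtained by using Equation \ref{sigmarel_pret} and multiplying through by $(\sigma_2-\sigma_1)(\sigma_2-\sigma_1+1)$. To evaluate it at the point in question, I must check that none of these factors vanish and that both expressions are defined there.

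\begin{itemize}
\item $(1-z_0)(1-z_1)\neq 0$ because shape parameters avoid $1$.
\item $1-\sigma_2\neq 0$, since otherwise Equation \ref{sigmarel_2_pret} would force $\sigma_2(1-\sigma_1+\sigma_2)^2=0$, which is impossible.
\item $\sigma_2\neq\sigma_1$: by Equation \ref{sigmarel_pret} this would force $\sigma_1\sigma_2=3\sigma_2-1$. Combined with Equation \ref{sigmarel_2_pret}, this gives finitely many points, which I would check by hand are not in the upper half-plane region or are degenerate.
\item The denominator of $\tau_{\mathcal{V}}$ is nonzero, because the cusp modulus $\tau(\mathbf{l}_1)/\tau(\mathbf{m}_1)$ is a genuine element of the upper half-plane.
\end{itemize}

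If the case $\sigma_1=\sigma_2$ proves awkward, I would instead verify $\tau_{\mathcal{V}}=\frac{1-z_0z_1}{(1-z_0)(1-z_1)}$ directly. Cross-multiplying and using Equation \ref{sigmarel_2_pret} should show this holds on all of $\mathcal{V}_0(N)$ wherever both sides are defined. That would conclude the argument.
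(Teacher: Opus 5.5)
Your proposal is correct and is essentially the paper's own argument. The paper obtains this Fact immediately from \cite[Proposition 1.6]{CDM}, which makes $\tau_{\mathcal{V}}$ a cusp modulus, together with Lemma \ref{pret_birational} and the identity $\sigma_2-\sigma_1+1=(1-z_0)(1-z_1)$.

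Your extra check that the rational identity can be evaluated at the given point is sound, and it is simpler than you feared. On $\mathcal{V}_0(N)$ the displayed denominator of $\tau_{\mathcal{V}}$ is a nonzero multiple of $\tau(\mathbf{m}_1)$ and also equals $(1-\sigma_2)(1-\sigma_1+\sigma_2)(\sigma_2-\sigma_1)$, so $\tau(\mathbf{m}_1)\neq 0$ already rules out $\sigma_1=\sigma_2$; your fallback cross-multiplication also holds on all of $\mathcal{U}_0(N)$.
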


The proof of Lemma \ref{pret_birational} above also shows that 
\begin{equation}\label{tau_sigma1_pret}
\sigma_1=\tau^2+\tau+1-\tau^{-1} \quad \text{and } \sigma_2=\tau^2.
\end{equation}
Since $\sigma_2'=\sigma_2^{-1}$ and $\sigma_1-\sigma_2=\sigma_1'-\sigma_2'$, we have $\Q(\sigma_1, \sigma_2)=\Q(\sigma_1', \sigma_2')$. As a consequence, we can also conclude the following lemma.

\begin{lemma}\label{tau_sigma_pretzel}
$\Q(\sigma_1', \sigma_2')=\Q(\sigma_1, \sigma_2)=\Q(\tau)$.
\end{lemma}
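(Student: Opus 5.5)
The lemma follows from two field identities, both read off from relations already established. First, Equation \ref{tau_sigma1_pret} writes $\sigma_1$ and $\sigma_2$ as rational functions of $\tau$ with rational coefficients, namely $\sigma_1=\tau^2+\tau+1-\tau^{-1}$ and $\sigma_2=\tau^2$. Hence $\Q(\sigma_1,\sigma_2)\subseteq\Q(\tau)$.

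Conversely, Lemma \ref{pret_birational} defines $\tau(\sigma_1,\sigma_2)=\frac{1-\sigma_2}{\sigma_2-\sigma_1+1}$, which is a rational function of $(\sigma_1,\sigma_2)$ over $\Q$. So $\tau\in\Q(\sigma_1,\sigma_2)$, and therefore $\Q(\sigma_1,\sigma_2)=\Q(\tau)$.

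The one point to check is that this expression is defined at our point, i.e. that $\sigma_2-\sigma_1+1=(1-z_0)(1-z_1)\neq 0$. This holds because shape parameters are never $1$, which gives $z_0,z_1\neq1$. The same computation identifies this $\tau$ with the expression in Fact \ref{pretzel_cusp_fld}.

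For the primed symmetric functions, Equation \ref{Pe2} gives $\sigma_2'=\sigma_2^{-1}$, and Equation \ref{Pe0} gives $\sigma_1'=\sigma_1-\sigma_2+\sigma_2'$. Thus $\sigma_1',\sigma_2'\in\Q(\sigma_1,\sigma_2)$. In the other direction, $\sigma_2=(\sigma_2')^{-1}$ and $\sigma_1=\sigma_1'-\sigma_2'+\sigma_2$, so $\sigma_1,\sigma_2\in\Q(\sigma_1',\sigma_2')$. Here $\sigma_2\neq0$ because $z_0,z_1\neq0$.

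Combining the two identities proves the lemma. Nothing here is a real obstacle; the only care needed is the nonvanishing of the denominators $\sigma_2$ and $1-\sigma_1+\sigma_2$, and both follow from $z_j\notin\{0,1\}$.
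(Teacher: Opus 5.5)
Your proof is correct and follows the paper's route. The birational map of Lemma~\ref{pret_birational} and its inverse, Equation~\eqref{tau_sigma1_pret}, give $\Q(\sigma_1,\sigma_2)=\Q(\tau)$, and the relations $\sigma_2'=\sigma_2^{-1}$ and $\sigma_1-\sigma_2=\sigma_1'-\sigma_2'$ from Equations~\eqref{Pe2} and~\eqref{Pe0} give $\Q(\sigma_1',\sigma_2')=\Q(\sigma_1,\sigma_2)$; your explicit check that the denominators $\sigma_2$ and $1-\sigma_1+\sigma_2=(1-z_0)(1-z_1)$ are nonzero is a small point the paper leaves implicit.
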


As in the Berge manifold case, we will follow the approach taken in \cite{CDHMMW}. Recall Equation \ref{tau_sigma1_pret} from above. Toeing the discussion in \cite[Paragraph after Lemma 6.5]{CDHMMW}, we now define the rational map $\psi: \mathbb{C}\to \mathbb{C}^2$ as 
$$\psi (\tau)=(\tau^2+\tau+1-\tau^{-1}, \tau^2)=(\sigma_1, \sigma_2).$$
Observe that the image of $\psi$ lies in $\mathcal{U}_0(N)$. Furthermore, $\psi$ induces the pullback map $\psi^{\ast}: \mathbb{C}(\mathcal{U}_0(N)) \to \C(\tau)$. 
\begin{remark}
In this case, neither of any of the shape parameter (as in the Whitehead case in \cite{NeumReid}) nor either of the $\sigma_i$ (as in the $6^2_2$ case in \cite{CDHMMW}) parametrize the one-dimensional variety corresponding to the one-cusped Dehn fillings. 
\end{remark}

We will now focus on the trace fields of the one-cusped Dehn fillings of the $(-2,3,8)$-pretzel link complement. As it turns out in Theorem \ref{tracecusp_pret} below, these trace fields are same as the corresponding cusp fields. This is similar to what we saw in the Berge manifold case. Our proof will be similar\textemdash we will follow the approach in the proof of \cite[Theorem 6.2]{NeumReid} and \cite[Proposition 6.16]{CDHMMW} and use Poincar\'e polyhedron theorem. 

\begin{thm}\label{tracecusp_pret}
\pretzelcusptrace
\end{thm}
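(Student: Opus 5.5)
By Fact \ref{sym_fact}, we may assume without loss of generality that $N_{(p,q)}$ is obtained by $(p,q)$-Dehn filling the unknotted cusp $c_0'$, keeping the knotted cusp $c_1'$ complete. In this setting the corresponding point of $\mathcal{V}_0(N)$ has cusp field $\Q(\tau)$, with $\tau=\frac{1-z_0z_1}{(1-z_0)(1-z_1)}$ (Fact \ref{pretzel_cusp_fld}). We always have
$$\text{cusp field}\subset\text{invariant trace field}\subset\text{trace field}$$
by \cite[Proposition 2.7]{NeumReid} and \cite[Theorem 3.1.2]{MacRe}. So it suffices to show that the trace field of $\Gamma_{N_{(p,q)}}$ is contained in $\Q(\tau)$.

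We follow the proof of Theorem \ref{Bcusptrace}. From the cusp triangulation of the knotted cusp in Figure \ref{cusp1_pretzel}, we select one cusp triangle from each tetrahedron so that the four selected triangles form a connected fundamental region; the yellow quadrilateral is a natural starting point. We realize these triangles as the bottom faces of $T_0,\dots,T_3$ with common vertex $\infty$, and glue the tetrahedra along their shared vertical faces. This yields an ideal polyhedron $P$ with vertices at $\infty$, $0$, $1$ and further vertices that are products of the shape parameters $z_j$, $\zeta_1(z_j)$, $\zeta_2(z_j)$, as with $x,y,w$ in the Berge case. The remaining faces of $P$ are paired by the gluings of the triangulation.

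Next we apply the Poincar\'e polyhedron theorem to read off a presentation of $\Gamma_N$, which is also a presentation of $\Gamma_{N_{(p,q)}}$ with the added filling relation. We expect to be able to reduce to two generators $a,e$, since the link complement is two-generated. We then write explicit $\operatorname{SL}(2,\C)$ matrices for $a$ and $e$ as face pairings determined by the vertex positions. By \cite[Equation 3.25]{MacRe}, the trace field is $\Q(\operatorname{tr}a,\operatorname{tr}e,\operatorname{tr}ae)$.

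The remaining step is to simplify each of these three traces, using the relations on $\mathcal{V}_0(N)$, into a rational function of $\sigma_1,\sigma_2,\sigma_1',\sigma_2'$. The relations available are \eqref{Pe2}, \eqref{Pe0}, \eqref{Pretzel_hol_eq}, \eqref{z3eq_pret}, $\sigma_2'=\sigma_2^{-1}$, and \eqref{sigmarel_2_pret}. Then Lemma \ref{tau_sigma_pretzel} and Equation \eqref{tau_sigma1_pret}, namely $\sigma_1=\tau^2+\tau+1-\tau^{-1}$ and $\sigma_2=\tau^2$, express each trace in $\Q(\tau)$. This gives trace field $\subset\Q(\tau)$, and hence all three fields coincide.

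The main obstacle is this simplification. The raw traces involve square roots, such as $\sqrt{(x-y)(w-y)}$, and are symmetric only in pairs of shape parameters, unlike the clean Berge case. One must verify that every square root becomes a perfect square on $\mathcal{V}_0(N)$. For example, $\sigma_2=\tau^2$ shows $\sqrt{z_0z_1}=\pm\tau$, and \eqref{sigmarel_2_pret} shows $\sqrt{\sigma_2}=\pm\frac{1-\sigma_2}{1-\sigma_1+\sigma_2}$. The plan is to rewrite each radicand in terms of $z_0z_1$, $(1-z_0)(1-z_1)$ and $1-z_0z_1$, whose ratios are powers of $\tau$, and to choose the generators so that the radicands take this form. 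If necessary, we would pass to a different pair of face-pairing generators whose traces are visibly symmetric in $(z_0,z_1)$.
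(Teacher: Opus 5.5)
There is a genuine gap. Your outline has the same architecture as the paper's proof: Poincar\'e polyhedron theorem, explicit $\operatorname{SL}_2(\C)$ face-pairing matrices, \cite[Equation 3.25]{MacRe}, then Lemma~\ref{tau_sigma_pretzel}. But the step that carries the whole content of the theorem is not done. You never fix the polyhedron, the generators, their matrices or their traces; you only \emph{expect} every radicand to become a square in $\Q(\tau)$.

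That is not automatic. On $\mathcal{V}_0(N)$ everything is rational in $z_0,z_1$. However, $z_0,z_1$ are the roots of $X^2-\sigma_1X+\sigma_2$, so $\Q(z_0,z_1)$ can be a quadratic extension of $\Q(\sigma_1,\sigma_2)=\Q(\tau)$. On top of that, normalising to $\operatorname{SL}_2$ introduces square roots of determinants.

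The heuristic you propose for the square roots is also inaccurate. On $\mathcal{V}_0(N)$ one has $z_0z_1=\tau^2$, $(1-z_0)(1-z_1)=\tau^{-1}-\tau$ and $1-z_0z_1=1-\tau^2$. So only $\frac{1-z_0z_1}{(1-z_0)(1-z_1)}=\tau$ is a power of $\tau$. For instance $\frac{z_0z_1}{(1-z_0)(1-z_1)}=\frac{\tau^3}{1-\tau^2}$ is not, and a radicand of that shape would not be handled by your plan. Finally, two-generation of $\pi_1(N)$ does not by itself reduce the face-pairing generators to two. The $n$-generator form of the trace-field lemma would serve instead, at the cost of more traces to control.

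What makes the computation close up in the paper is the specific polyhedron. As in the Berge proof, the cusp placed at $\infty$ is the \emph{filled} one, here $c_0'$. You carried over ``knotted'' from the Berge case, but for $N$ the knotted cusp is the complete one. Each tetrahedron has exactly its vertex $0$ at $c_0'$. The four triangles of Figure~\ref{cusp0_pret} surround a single edge of valence four, whose edge equation is $z_0z_1z_2z_3=1$.

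Coning them gives an ideal octahedron with vertices $\infty,0,1,z_1,z_1z_2,z_0z_1z_2$ and four face pairings. The edge cycles reduce these to two generators $c,d$, with $a=d^{-1}cd^{-1}$, $b=cdc$ and relation $ab=ba$. The matrix of $c$ has entries in $\Q(z_0,z_1)$ and determinant exactly $1$. The only normalising square root needed for $d$ is $\sqrt{z_0z_1}=\tau$, which is precisely the identity you anticipated. This yields $\operatorname{tr}c=-\frac{2-\sigma_1}{1-\sigma_1+\sigma_2}$, $\operatorname{tr}d=-\frac{\sigma_1-2\sigma_2}{\tau(1-\sigma_1+\sigma_2)}$ and $\operatorname{tr}(cd)=\sigma_2/\tau=\tau$. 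All three lie visibly in $\Q(\tau)$ by Lemma~\ref{tau_sigma_pretzel}.

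The same octahedron is reachable from your knotted-cusp set-up. It appears as the four triangles $T_1^1,T_2^2,T_0^3,T_3^2$ of Figure~\ref{cusp1_pretzel}, which meet at a common vertex labelled $0$ in each. The yellow region, by contrast, is a fundamental domain of the whole cusp torus built from all twelve triangles, so it does not single out a choice. Until such a computation is carried out, your proposal restates what has to be proved rather than proving it.
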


\begin{proof}
Fact \ref{sym_fact} implies that it is enough to prove the result when $N_{(p,q)}$ is obtained by $(p,q)$-Dehn filling cusp $c_0'$ of the $(-2,3,8)$-pretzel link complement (i.e. the unknotted cusp). Following the proof method of \cite[Proposition 6.16]{CDHMMW}, we will arrange the four ideal tetrahedra to form an octahedron whose face pairings result in $N$. For this arrangement, we use the four cusp triangles from the cusp triangulation of the unknotted cusp of the link complement, shown in Figure \ref{cusp0_pret}. The two pictures on Figure \ref{octpret} show how this octahedron is constructed. After choosing one of the vertex at $\infty$ and two other vertices at $0$ and $1$, we see from the left of Figure \ref{cusp0_pret} that the rest of the vertices of this octahedron should be at $z_1$, $z_1 z_2$ and $z_0 z_1 z_2$. 

\begin{figure}
\begin{tikzpicture}


\draw[green, very thick] (0,0)--(2,0);
\draw[brown, thick] (0,0)--(0,2);
\draw [orange, thick](0,2)--(2,2);
\draw[violet, thick] (2,2)--(2,0);
\draw (0,2)--(1,3.732);
\draw (2,2)--(1,3.732);
\draw (0,0)--(1,3.732);
\draw (2,0)--(1,3.732);

\draw[magenta, thick, dashed] (0,2)--(1,-1.732);
\draw [magenta, thick, dashed](2,2)--(1,-1.732);
\draw [magenta, thick](0,0)--(1,-1.732);
\draw [magenta, thick](2,0)--(1,-1.732);

\node at (-.1, -.15){$1$};

\node at (1.1, 3.85){$\infty$};
\node at  (1.1,-1.85){$0$};
\node at  (2.15,-.15){$z_1$};
\node at  (2.3,2.15){$z_1 z_2$};
\node at  (-.5,2.15){$z_0 z_1 z_2$};

\node at  (-.4,1.2){$A$};
\node at  (2.5,1.2){$A'$};

\node at  (.15,-1){$D'$};
\node at  (1.8,-1){$D$};

\node at (1,2.45){$B'$};
\node at (1,1.5){$C'$};

\node at (1,.4){$B$};

\node at (1,-.5){$C$};

\begin{scope}[xshift=-4cm, scale=2 ]

\draw[magenta, thick] (0,0)--(1,0);
\draw [magenta, thick](0,0)--(0,1);
\draw (1,0)--(0,1);
\draw[magenta, thick] (0,0)--(-1,0);
\draw (-1,0)--(0,1);
\draw (-1,0)--(0,-1);
\draw[magenta,thick] (0,-1)--(0,0);
\draw (0,-1)--(1,0);

\draw [green, thick](0,-1)--(-1,0);
\draw [orange, thick](1,0)--(0,1);

\draw [violet, thick](0,-1)--(1,0);
\draw [brown, thick](-1,0)--(0,1);

\node at (-.15,.5){\Small{$T_3^0$}};
\node at (-.35,.25){$D'$};

\node at (-.1,.75){$1$};
\node at (-.1,.15){$2$};
\node at (-.73,.15){$3$};

\node at (.35,.25){$C'$};
\node at (.15,.5){\Small{$T_0^0$}};

\node at (.1,.75){$1$};
\node at (.1,.15){$3$};
\node at (.73,.15){$2$};

\node at (-.15,.-.5){\Small{$T_1^0$}};
\node at (-.35,.-.25){$C$};

\node at (-.1,-.75){$2$};
\node at (-.1,-.12){$1$};
\node at (-.73,-.12){$3$};

\node at (.15,-.5){\Small{$T_2^0$}};
\node at (.35,-.25){$D$};

\node at (.1,-.75){$1$};
\node at (.1,-.12){$2$};
\node at (.73,-.12){$3$};

%
%
%
%

\end{scope}

\end{tikzpicture}
\caption{Left: Four cusp triangles from the unknotted cusp triangulation each corresponding to an ideal tetrahedron in the tetrahedral decomposition of the $(-2,3,8)$-pretzel link complement, Right: Ideal octahedron representing the $(-2,3,8)$-pretzel link complement with top faces $A$, $B$, $A'$, $B'$ and bottom faces $C$, $D$, $C'$, $D'$}
\label{octpret}
\end{figure}

(We used Sage \cite{sagemath} for the computations below.) We will use $a$, $b$, $c$ and $d$ to denote respectively the isometries sending $A$ to $A'$,  $B$ to $B'$, $C$ to $C'$ and $D$ to $D'$. Poincar\'e polyhedron theorem implies that the fundamental group of $N$ has the following presentation:
\begin{equation}
\pi_1(N)=\left\langle c, d: a b=b a \right \rangle \text{ where } a=d^{-1} c d^{-1} \text{ and } b=cdc. \nonumber
\end{equation}
Hence, $\pi_1(N_{(p,q)})$ can then be seen as a group generated by $c$ and $d$ where 
\begin{align}
c=&\begin{pmatrix}-\frac{1}{1-z_1} & \frac{z_1}{1-z_1}\\ \frac{z_0+z_1-z_0 z_1}{(1-z_0) z_1} &-\frac{1}{1-z_0} \end{pmatrix},\nonumber\\
d=&\begin{pmatrix}-\frac{z_0}{\tau(1-z_0)} &-\frac{z_1}{\tau(1-z_1)}\\ \frac{z_0(1-z_1)}{\tau}+\frac{z_0}{\tau(z_0-1)}  & -\frac{z_1}{\tau(1-z_1)} \end{pmatrix}\nonumber.
\end{align}
We find that $\operatorname{Trace}(c)=-\frac{2-\sigma_1}{1-\sigma_1+\sigma_2}$, $\operatorname{Trace}(d)=-\frac{\sigma_1-2\sigma_2}{\tau(1-\sigma_1+\sigma_2)}$ and $\operatorname{Trace}(cd)=\frac{\sigma_2}{\tau}=\tau$. So, Lemma \ref{tau_sigma_pretzel} and \cite[Equation 3.25]{MacRe} implies that the trace field of  $\pi_1(N_{(p,q)})$ is $\Q(\tau)$, which, as mentioned in Fact \ref{pretzel_cusp_fld}, is the cusp field of $N_{(p,q)}$. Now, by applying \cite[Proposition 2.7]{NeumReid}) and arguing similarly as in the proof of Theorem \ref{Bcusptrace}, we see that the trace field, the invariant trace field and the cusp field of $N_{(p,q)}$ are all equal to $\Q(\tau)$. 
\end{proof}

\begin{remark}
As observed in \cite{HMPT}, there is an infinite family of hyperbolic twisted torus knots that can be realized as one-cusped Dehn fillings of the $(-2,3,8)$-pretzel link complement. So, the above theorems imply that these knot complements have the same trace field and cusp field. 
\end{remark}

\section{Arithmetic Data of the one-cusp Dehn fillings of the $(-2,3,8)$-pretzel link complement}\label{pretzel arithmetic}
SnapPy \cite{snappy} identifies the complement of the link \texttt{L13n5885} with the manifold \texttt{m125}. Let $\mathbf{m}_1^{125}$ and $\mathbf{l}_1^{125}$ respectively be the meridian and the longitude of cusp $1$ of the triangulation corresponding to \texttt{Manifold(`m125')} chosen by SnapPy \cite{snappy}. The isomorphism signature for this triangulation is $$\texttt{`eLPkbcdddlfffg\_BaabBaab'}. $$
Using the SnapPy command \texttt{Manifold(`m125').cusp\_neighborhood().view()}, one can see that the oriented generating pair $(\mathbf{m}_1^{125}, \mathbf{l}_1^{125})$ gives a square lattice for the corresponding peripheral group of cusp $1$ of the triangulation \texttt{Manifold(`m125')}. There are four possible oriented generating pairs giving square lattices. Denoting the set consisting of these four pairs by $S_{125}$, we see
$$S_{125}=\left \{(\mathbf{m}_1^{125}, \mathbf{l}_1^{125}), \left ((\mathbf{l}_1^{125})^{-1}, \mathbf{m}_1^{125} \right), \left ((\mathbf{m}_1^{125})^{-1}, (\mathbf{l}_1^{125})^{-1} \right), \left (\mathbf{l}_1^{125}, (\mathbf{m}_1^{125})^{-1} \right)  \right \}.$$

SnapPy \cite{snappy} tells us that the symmetry group of $\texttt{m125}$ only contains orientation preserving isometries and further, the stabilizer $\Delta$ of cusp $1$ in the symmetry group of $\texttt{m125}$ has order $4$ and is generated by a self-isometry $g_{\texttt{m125}}$ of $\mathbb{S}^3-\texttt{m125}$ such that  $g_{\texttt{m125}}(\mathbf{m}_1^{125})=(\mathbf{l}_1^{125})^{-1}$ and $g_{\texttt{m125}}(\mathbf{l}_1^{125})=\mathbf{m}_1^{125}$. This also means that  $g_{\texttt{m125}}$ sends $S_{125}$ to itself and $\Delta$  acts on $S_{125}$. This implies that $(p,q)$-Dehn filling on cusp $1$ of \texttt{Manifold(`m125')} with respect to one pair in $S_{125}$ is isometric to the $(p,q)$-Dehn filling of the same with respect to another pair in $S_{125}$. Consequently, the following fact holds. 
\begin{fact}\label{pretzel_nonnegativeDehn}
A $(p,q)$-Dehn filling on cusp $1$ of \texttt{Manifold(`m125')} with respect to a pair in $S_{125}$  is isometric to some $(p',q')$-Dehn filling of the same with respect to the same pair in $S_{125}$ for some non-negative integers $p'$ and $q'$. 
\end{fact}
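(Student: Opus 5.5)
The argument runs in parallel with the derivation of Fact \ref{Berge_nonnegativeDehn} for the Berge manifold, with the order-$4$ stabilizer $\Delta$ of cusp $1$ playing the role of the order-$6$ isometry $g$.

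First, fix the pair $(\mathbf{m},\mathbf{l})=(\mathbf{m}_1^{125},\mathbf{l}_1^{125})\in S_{125}$; any other pair in $S_{125}$ works the same way. The $(p,q)$-Dehn filling along this pair fills the slope $p\mathbf{m}+q\mathbf{l}$. The generator $g_{\texttt{m125}}$ of $\Delta$ is an orientation preserving self-isometry of \texttt{m125} fixing cusp $1$. Hence the filling along the slope $\gamma$ is isometric to the filling along $g_{\texttt{m125}}(\gamma)$, since the isometry extends over the attached (orbi-)solid torus. Using $g_{\texttt{m125}}(\mathbf{m})=\mathbf{l}^{-1}$ and $g_{\texttt{m125}}(\mathbf{l})=\mathbf{m}$, we get
\[
g_{\texttt{m125}}(p\mathbf{m}+q\mathbf{l})=q\mathbf{m}-p\mathbf{l}.
\]
So in coordinates with respect to the fixed pair, $g_{\texttt{m125}}$ acts on filling coefficients as the rotation $(p,q)\mapsto(q,-p)$ by $-\pi/2$. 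Its powers therefore give the orbit
\[
(p,q),\quad (q,-p),\quad (-p,-q),\quad (-q,p).
\]

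Second, do a quadrant check. Every nonzero integer vector $(p,q)$ lies in some closed quadrant, and the four rotations by multiples of $\pi/2$ permute the four closed quadrants transitively. So some element of the orbit, call it $(p',q')$, has $p'\ge 0$ and $q'\ge 0$. Concretely: if $p,q\ge0$, keep $(p,q)$; if $p\ge0>q$, take $(-q,p)$; if $p,q<0$, take $(-p,-q)$; if $p<0\le q$, take $(q,-p)$. Note also that $\gcd(p',q')=\gcd(p,q)$, so orbifold fillings go to orbifold fillings of the same order, and hyperbolicity is preserved because the fillings are isometric.

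Finally, the pair $(\mathbf{m},\mathbf{l})$ was arbitrary in $S_{125}$. This is legitimate because $\Delta$ acts on $S_{125}$ and commutes with the coordinate description: the matrix of $g_{\texttt{m125}}$ with respect to any pair in $S_{125}$ is the same rotation. This gives the statement.

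The only real obstacle is checking that the isometry really acts as the stated rotation. The sign conventions from SnapPy must give $(p,q)\mapsto(q,-p)$ and not a reflection. This is guaranteed because $g_{\texttt{m125}}$ is orientation preserving, so its matrix has determinant $1$ and order $4$.
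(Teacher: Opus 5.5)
Your proof is correct and follows the paper's argument, made explicit: the paper notes that the order-$4$ stabilizer $\Delta$ permutes $S_{125}$, so $(p,q)$-filling along one pair is isometric to $(p,q)$-filling along another, and in fixed coordinates this is exactly your rotation $(p,q)\mapsto(q,-p)$ followed by your quadrant check. One small caveat: the rotation is pinned down by the explicit action $g_{\texttt{m125}}(\mathbf{m}_1^{125})=(\mathbf{l}_1^{125})^{-1}$, $g_{\texttt{m125}}(\mathbf{l}_1^{125})=\mathbf{m}_1^{125}$ on the square basis, not by ``determinant $1$ and order $4$'' alone, since an order-$4$ element of $\mathrm{SL}_2(\Z)$ in a non-square basis, such as $\begin{pmatrix}1&2\\-1&-1\end{pmatrix}$ acting on $(3,-1)$, need not move every vector into the closed first quadrant.
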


By entering the command $$\texttt{Manifold(`m125').isomorphisms\_to(Manifold(`L13n5885'))}$$ on SnapPy \cite{snappy}, one can check that there is an isomorphism from the triangulation \texttt{Manifold(`m125')} to the triangulation \texttt{Manifold(`L13n5885') }sending cusp $1$ to cusp $0$ (and cusp $0$ to cusp $1$). Figure \ref{cusp0_pret} shows the cusp triangulation of cusp $0$ of  \texttt{Manifold(`L13n5885')} (i.e. the unknotted cusp of the $(-2,3,8)$ pretzel link complement). We will call this cusp $c_0'$. If we take $\mathbf{m}_0$ and $\mathbf{l}_0$ respectively as the red and blue directed edge as in Figure \ref{cusp0_pret}, we see that $(\mathbf{m}_0,\mathbf{l}_0)$ is an oriented generating pair of the cusp torus corresponding to $c_0'$ giving a square lattice. 
So, $(\mathbf{m}_0, \mathbf{l}_0)$ belongs to $S_{125}$. So, in view of Fact \ref{sym_fact} and \ref{pretzel_nonnegativeDehn}, we will assume that both $p$ and $q$ are non-negative in our analysis of the $(p,q)$-Dehn filling of the unknotted cusp with respect to $(\mathbf{m}_0, \mathbf{l}_0)$ in order to establish Theorem \ref{pretzel_arith_thm}.

\begin{figure}
\begin{tikzpicture}[scale=1.5]
\fill[color=yellow!50] (-1,0)--(0,1)--(1,0)--(0,-1)--cycle;

\draw (0,0)--(1,0);
\draw (0,0)--(0,1);
\draw (1,0)--(0,1);
\draw (0,0)--(-1,0);
\draw (-1,0)--(0,1);
\draw (-1,0)--(0,-1);
\draw (0,-1)--(0,0);
\draw (0,-1)--(1,0);

\draw [blue, thick, ->>](0,-1)--(-.5,-.5);
\draw [blue, thick](-.5,-.5)--(-1,0);
\draw [blue, thick, ->>](1,0)--(.5,.5);
\draw [blue, thick](.5,.5)--(0,1);

\draw [red, thick, ->>](0,-1)--(.5,-.5);
\draw [red, thick](.5,-.5)--(1,0);
\draw [red, thick, ->>](-1,0)--(-.5,.5);
\draw [red, thick](-.5,.5)--(0,1);

\node at (-.25,.35){$T_3^0$};
\node at (-.1,.75){$1$};
\node at (-.1,.15){$2$};
\node at (-.73,.15){$3$};

\node at (.25,.35){$T_0^0$};
\node at (.1,.75){$1$};
\node at (.1,.15){$3$};
\node at (.73,.15){$2$};

\node at (-.25,.-.35){$T_1^0$};
\node at (-.1,-.75){$2$};
\node at (-.1,-.12){$1$};
\node at (-.73,-.12){$3$};

\node at (.25,-.35){$T_2^0$};
\node at (.1,-.75){$1$};
\node at (.1,-.12){$2$};
\node at (.73,-.12){$3$};

%
%
%
%

\end{tikzpicture}
\caption{Cusp triangulation of the unknotted cusp of the $(-2,3,8)$ pretzel link complement}
\label{cusp0_pret}
\end{figure}
Observe from Figure \ref{cusp0_pret} (and equations \ref{Pretzel_hol_eq} and \ref{z3eq_pret}) that the holonomy derivatives of the homology curves $\mathbf{m}_0$ and $\mathbf{l}_0$ are 
\begin{align*}
\mu (\mathbf{m}_0)&=-\zeta_1(z_3) \zeta_2(z_0) \zeta_1(z_1) \zeta_2(z_3)=\frac{\zeta_2(z_0) \zeta_1(z_1)}{z_3}=\frac{(1-z_0)^2 z_1}{(1-z_1)^2 z_0},\\
\mu (\mathbf{l}_0)&=-\zeta_1(z_2) \zeta_2(z_1) \zeta_1(z_1) \zeta_2(z_3)=\frac{\zeta_1(z_2) \zeta_2(z_3)}{z_1}=\frac{z_0}{z_1}.
\end{align*}

At the complete structure, we have the holonomy equation $\mu (\mathbf{l}_0)=\frac{z_0}{z_1}=1$. This implies $z_0=z_1$. This along with equations \ref{Pe2}, \ref{Pe0} and \ref{Pretzel_hol_eq} give us the following (which, as mentioned in the beginning of subsection \ref{pretzel_gluing}, can be verified from SnapPy \cite{snappy}). 
\begin{fact}
At the complete structure, i.e. when $N_{(p,q)}=N_{\infty}=N$, $z_0=z_1=z_2=z_3=\mathrm{i}$ and consequently, Fact \ref{pretzel_cusp_fld} implies that $\tau=\frac{2}{(1-\mathrm{i})^2}=\mathrm{i}$. 
\end{fact}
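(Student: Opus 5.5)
The statement is a consequence of the holonomy equation at the complete structure together with the gluing equations, so I would verify it by direct substitution. This is the approach used for Fact \ref{Berge_full_complete}.

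At the complete structure both cusps are complete. The condition $\mu(\mathbf{l}_0)=\frac{z_0}{z_1}=1$ therefore gives $z_0=z_1$, and we are also on $\mathcal{V}_0(N)$, where \eqref{Pretzel_hol_eq} and \eqref{z3eq_pret} hold. Substituting $z_0=z_1=z$ gives
\[
z_2=\zeta_1(z)\zeta_2(z)=\frac{1}{1-z}\cdot\frac{z-1}{z}=-\frac1z,
\qquad
z_3=\zeta_2(z)\zeta_1(z)=-\frac1z.
\]
Equation \eqref{Pe2} is then automatically satisfied, since $z^2\cdot z^{-2}=1$.

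Equation \eqref{Pe0} becomes $\zeta_1(z)^2=\zeta_1(-1/z)^2$, that is, $\frac{1}{(1-z)^2}=\frac{z^2}{(z+1)^2}$. This yields two equations:
\[
z+1=\pm z(1-z).
\]
The plus sign gives $z^2+1=0$, so $z=\pm\mathrm{i}$. The minus sign gives $z^2-2z-1=0$, whose roots $z=1\pm\sqrt2$ are real and hence excluded, because shape parameters must lie in the upper half-plane.

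This forces $z=\mathrm{i}$, and then $z_2=z_3=-1/\mathrm{i}=\mathrm{i}$. So $z_0=z_1=z_2=z_3=\mathrm{i}$, which agrees with the SnapPy shapes lying in $\{\mathrm{i},1+\mathrm{i},\tfrac{1+\mathrm{i}}{2}\}$.

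Finally, I substitute into Fact \ref{pretzel_cusp_fld}:
\[
\tau=\frac{1-\mathrm{i}^2}{(1-\mathrm{i})^2}=\frac{2}{-2\mathrm{i}}=\mathrm{i}.
\]
As a consistency check, this agrees with $\sigma_2=\tau^2=-1=z_0z_1$ from \eqref{tau_sigma1_pret}.

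The only delicate point is justifying that the complete structure lies on the branch $z_0=z_1$ and is distinguished by the upper half-plane condition. I handle this by discarding the real roots $1\pm\sqrt2$, as above.
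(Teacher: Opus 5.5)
Your proposal is correct and is essentially the paper's argument. The paper likewise imposes $\mu(\mathbf{l}_0)=z_0/z_1=1$, combines $z_0=z_1$ with equations \ref{Pe2}, \ref{Pe0} and \ref{Pretzel_hol_eq} (deferring the verification to SnapPy), and substitutes into the formula for $\tau$ from Fact \ref{pretzel_cusp_fld}; you carry out that algebra explicitly, reducing to $z+1=\pm z(1-z)$ and using the upper half-plane condition to discard $-\mathrm{i}$ and $1\pm\sqrt{2}$, and each of these steps checks out.
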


We will now investigate the $(p,q)$-Dehn fillings on the unknotted cusp with respect to the oriented generating pair $(\mathbf{m}_0, \mathbf{l}_0)$ where both $p$ and $q$ are non-negative. The corresponding $(p,q)$-Dehn filling equation is 

\begin{equation}\label{Dehneq_pret}
\mu(\mathbf{m}_0^p \mathbf{l}_0^q)=\frac{\left(1-z_0\right)^{2p}}{\left(1-z_1\right)^{2p}} \frac{z_1^{p-q}}{z_0^{p-q}}=1.
\end{equation}

As in the Berge manifold case, we will now follow the approach in \cite[Theorem 6.3]{NeumReid} and \cite[Proposition 6.6]{CDHMMW} to derive a similar result. We will show that cusp moduli  $\tau$ of $N_{(p,q)}$ is a unit algebraic integer when $p \ne 2q$. As in the Berge manifold case, in the proof below, we will employ the functions $s_d$ and $t_d$ of \cite{CDHMMW} from Subsection \ref{onecusp622}, which here will mean respectively $s_d(z_0,z_1)$ and $t_d(z_0,z_1)$, and use $g_{(p,q)}$, $h_{(p,q)}$ and $h'_{(p,q)}$ to denote the relevant polynomials to keep notational consistency with \cite{CDHMMW}. 
\begin{prop}\label{pret_algint}
If both $p$ and $q$ are non-negative integers such that $p\ne 2q$, then cusp moduli $\tau$ of the $(p,q)$-Dehn filling on cusp $c_0'$ of the $(-2,3,8)$-pretzel link complement with respect to $(\mathbf{m}_0, \mathbf{l}_0)$ is a unit non-rational algebraic integer. 
\end{prop}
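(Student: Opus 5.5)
The plan is to follow the template of Proposition \ref{Balgint}: turn the Dehn filling equation \ref{Dehneq_pret} into a symmetric polynomial identity in $(z_0,z_1)$, push it down to $\mathcal{U}_0(N)$ through $\sigma_1,\sigma_2$, and pull it back to $\C(\tau)$ through $\psi$. The goal is a Laurent polynomial in $\tau$ whose extreme coefficients are $\pm1$. Non-rationality is automatic, since $\tau$ is a cusp modulus and so lies in the upper half plane.

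\textbf{Step 1: a symmetric form of the filling equation.} Write $u=z_0$, $v=z_1$, and set $\alpha(u)=(1-u)^2/u=\sum_{k=0}^{2p}\binom{2p}{k}(-1)^k u^{k-p}$ after raising to the $p$-th power. Equation \ref{Dehneq_pret} is equivalent to $\alpha(u)^p u^q=\alpha(v)^p v^q$. Expanding gives
$$\sum_{k=0}^{2p}\binom{2p}{k}(-1)^k\left(u^{e_k}-v^{e_k}\right)=0,\qquad e_k=k-p+q .$$
Each summand is divisible by $u-v$:
\begin{itemize}
\item for $e_k>0$ one gets $(u-v)s_{e_k-1}(u,v)$;
\item for $e_k<0$ one gets $-(u-v)(uv)^{e_k}s_{|e_k|-1}(u,v)$;
\item for $e_k=0$ the summand vanishes.
\end{itemize}
Since $u=v$ is the complete structure at $c_0'$ (where $\mu(\mathbf{l}_0)=z_0/z_1=1$), we discard the factor $u-v$. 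This leaves a symmetric Laurent polynomial $g_{(p,q)}$ with $g_{(p,q)}=0$ as the filling equation. By \cite[Lemma 6.5]{CDHMMW}, $g_{(p,q)}=\phi^\ast(h_{(p,q)})$, where $h_{(p,q)}$ is built from $t_d(\sigma_1,\sigma_2)$ and integer powers of $\sigma_2$.

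\textbf{Step 2: pull back to $\tau$.} Substitute \eqref{tau_sigma1_pret}, namely $\sigma_1=\tau^2+\tau+1-\tau^{-1}$ and $\sigma_2=\tau^2$. Then $\psi^\ast(h_{(p,q)})=\tau^{-N}h'_{(p,q)}(\tau)$ with $h'_{(p,q)}\in\Z[\tau]$, and it remains to show that the leading and constant coefficients of $h'_{(p,q)}$ are $\pm1$.

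The contributions of $t_d$ behave as follows:
\begin{itemize}
\item \emph{Lowest power.} $\sigma_2$ has no negative powers of $\tau$, so in $t_d(\psi)$ the lowest power is $\tau^{-d}$, coming only from $\sigma_1^d$, with coefficient $(-1)^d$.
\item \emph{Highest power.} Every monomial $\sigma_1^{d-2j}\sigma_2^j$ reaches $\tau^{2d}$. The top coefficient is therefore $t_d(1,1)=s_d(\omega,\bar\omega)$ with $\omega=e^{\pi\mathrm{i}/3}$, which lies in $\{0,\pm1\}$ and is $0$ exactly when $d\equiv 1\pmod 3$.
\end{itemize}

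The extreme exponents of the whole sum come from $k=2p$ (giving $e=p+q$, coefficient $1$) and $k=0$ (giving $e=q-p$, coefficient $1$), possibly multiplied by $\sigma_2^{e}$. So the bookkeeping is to compare, among these two extreme summands, which one gives the overall top power and which the overall bottom power of $\tau$.

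\textbf{Main obstacle.} The hard step is controlling the top coefficient, because $t_d(1,1)$ can vanish. When it does, the true top term drops a degree, and it may collide with or be overtaken by terms from the next values of $k$, whose coefficients are binomials $\pm\binom{2p}{k}$ and hence not units.

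My first attempt will be to avoid $t_d$ at the top altogether. I would compute the behaviour at $\tau\to\infty$ directly from the roots: on the curve, $z_0,z_1\approx\tau\cdot(\text{roots of }x^2-x+1)$ to leading order. Alternatively, I would first multiply the filling equation by a suitable power of $uv=\tau^2$ so that the relation becomes a genuine polynomial, and only then take asymptotics. I expect this to show that the leading coefficient is a sum of two unit terms, one from $e=p+q$ and one from $e=q-p$. These cancel or combine to a non-unit precisely when the degrees $2(p+q)$ and $2|q-p|+\ldots$ coincide in the relevant way. I expect that degenerate case to be $p=2q$, which is the excluded case, and I would confirm it with Sage on small $(p,q)$ before writing the general argument.

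The constant-term side should be easier. It is governed by $\sigma_1^d\sim -\tau^{-1}$, with coefficient $\pm1$ coming from the single extreme summand. Once both ends are units, $\tau$ is a root of a polynomial in $\Z[\tau]$ with leading and constant coefficients $\pm1$, so $\tau$ is a unit algebraic integer.
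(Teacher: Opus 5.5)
Your Steps 1 and 2 set things up exactly as the paper does. The gap is in the degree bookkeeping: you place the difficulty at the wrong end of the polynomial, and the claims you rely on are false.

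\textbf{The leading coefficient is not a problem.} We have $\sigma_1=\tau^2+\tau+1-\tau^{-1}$ but $\sigma_2=\tau^2$. So the monomial $\sigma_1^{d-2j}\sigma_2^{j}$ has top degree $2d-2j$, not $2d$. Only $\sigma_1^d$ reaches $\tau^{2d}$, so the leading coefficient of $\psi^\ast(t_d)$ is always $1$, and $t_d(1,1)$ plays no role. Your asymptotics are also wrong: on this curve $\{z_0,z_1\}\approx\{\tau^2,1\}$ as $\tau\to\infty$, not $\tau$ times the roots of $x^2-x+1$.

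With the correct degrees the top end is automatic. A summand with exponent $e>0$ has top degree $2(e-1)$. A summand with $e<0$ carries the factor $\sigma_2^{e}=\tau^{2e}$ and has top degree $-2$. Hence the overall leading term is $\tau^{2(p+q-1)}$ with coefficient $1$, coming from $k=2p$ alone. This is the paper's $\pm\tau^{4p-2}$ after it multiplies through by $\sigma_2^{p-q}$.

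\textbf{The real issue is the constant term.} You dismiss it as coming from ``the single extreme summand'', but when $p>q$ there are two candidates:
\begin{itemize}
\item The $k=0$ summand $-\tau^{2(q-p)}\psi^\ast(t_{p-q-1})$ has lowest term $(-1)^{p-q}\tau^{3(q-p)+1}$.
\item The $k=2p$ summand $\psi^\ast(t_{p+q-1})$ has lowest term $(-1)^{p+q-1}\tau^{-(p+q-1)}$.
\end{itemize}
Every other summand has strictly larger lowest degree: $-(e-1)$ for $e>0$, and $3e+1$ for $e<0$.

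The two candidate exponents coincide exactly when $p=2q$. In that case the coefficients are $(-1)^q$ and $-(-1)^q$, so they cancel and the surviving constant term need not be a unit. The paper's treatment of $p=2q$ in the proof of Theorem \ref{pretzel_arith_thm} indeed produces $\tau^5+3\tau^4+\tau^3-\tau^2-2$.

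So you guessed the degenerate case correctly, but the hypothesis $p\neq 2q$ is used at the bottom, not the top. That is precisely the paper's comparison of the exponents $-p+q+1$ and $p-3q+1$ in region $R_2$. In region $R_1$ ($p\le q$) no negative exponents occur, and the bottom comes from $k=2p$ alone.

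As written, your plan would chase a nonexistent cancellation at the leading coefficient using incorrect asymptotics. It would also assert the constant-term claim without the comparison that actually requires $p\neq 2q$.
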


\begin{proof}

We begin by defining 
\begin{align}
R_1&=\{(p,q): p \ge 0, q \ge 0, p-q\le 0\}\nonumber \\
R_2&=\{(p,q): p \ge 0, q \ge 0, p-q\ge 1, p\neq 2q\}.\nonumber
\end{align}
Note that the $R_1 \cup R_2=\left \{(p,q): p \text{ and } q \text{ are non-negative integers and } p \ne 2q\right \}$. 

Let $(p,q)\in R_1$. In this case, the Dehn filling equation \ref{Dehneq_pret} is equivalent to the equation: $(1-z_0)^{2p} z_0^{q-p}-(1-z_1)^{2p}z_1^{q-p}=0$. With the convention that $s_{-1}=t_{-1}=0$, we consider the following polynomials 
\begin{align*}
g_{(p,q)}&=\sum_{j=0}^{2p} \begin{pmatrix}2p\\ j \end{pmatrix} (-1)^j s_{q-p+j-1}, \text{ and}\\
h_{(p,q)}&=\sum_{j=0}^{2p} \begin{pmatrix}2p\\ j \end{pmatrix} (-1)^j t_{q-p+j-1}. 
\end{align*}

As in the proof of Theorem \ref{Balgint}, we simplify the Dehn filling equation (Equation \ref{Dehneq_pret}) to $g_{(p,q)}=0$ and we also have $\phi^{\ast} (h_{(p,q)})=g_{(p,q)}$. Now in this case, $\psi^{\ast} \left(h_{(p,q)}\right)=\tau^{-(q+p-1)}h'_{(p,q)}$ where $h'_{(p,q)}$ is a polynomial in $\Z[\tau]$ with leading term $\pm \tau^{3(q+p-1)}$ and constant term $\pm 1$. This implies that when $(p,q) \in R_1$, $\tau$ is a non-rational unit algebraic integer. 

Now, let $(p,q) \in R_2$. In this case, we will take 
\begin{align*}
g_{(p,q)}&=\sum_{j=0}^{p-q-1} \begin{pmatrix}2p\\ j \end{pmatrix} (-z_0 z_1)^j s_{p-q-j-1}-\sum_{j=p-q+1}^{2p} \begin{pmatrix}2p\\ j \end{pmatrix} (-1)^j (z_0 z_1)^{p-q}s_{j-p+q-1}, \\
h_{(p,q)}&=\sum_{j=0}^{p-q-1} \begin{pmatrix}2p\\ j \end{pmatrix} (-\sigma_2)^j t_{p-q-j-1}-\sum_{j=p-q+1}^{2p} \begin{pmatrix}2p\\ j \end{pmatrix} (-1)^j (\sigma_2)^{p-q}t_{j-p+q-1}. 
\end{align*}

When $(p,q)\in R_2$, Equation \ref{Dehneq_pret} gives us $(1-z_0)^{2p} z_1^{p-q}-(1-z_1)^{2p} z_0^{p-q}=0$, which is equivalent to $(z_0-z_1) g_{(p,q)}=0$. Since $z_0=z_1$ corresponds to the complete structure at cusp $1$, the $(p,q)$-Dehn filling equation in this case is equivalent to $g_{(p,q)}=0$. As before, $\phi^{\ast} \left(h_{(p,q)}\right)=g_{(p,q)}$. The highest  $\tau$ degree term in $\psi^{\ast}(\sigma_2^j t_{p-q-j-1})$ is $\pm \tau^{2p-2q-2}$ and lowest $\tau$ degree term in $\psi^{\ast}(\sigma_2^j t_{p-q-j-1})$ is $\pm\tau^{3j-p+q+1}$. On the other hand, the highest and lowest $\tau$ degree terms in $\psi^{\ast}(\sigma_2^{p-q} t_{j-p+q-1})$ are $\pm \tau^{2j-2}$ and $\pm \tau^{3p-3q-j+1}$ respectively. So, the highest $\tau$ degree term in $\psi^{\ast}(h_{(p,q)})$ is $\pm \tau^{4p-2}$ (coming from the second sum term corresponding to $j=2p$) and lowest degree term in $\psi^{\ast}(h_{(p,q)})$ is either $\pm \tau^{-p+q+1}$ or $\pm \tau^{p-3q+1}$ (coming from respectively the first sum term corresponding to $j=0$ or the second sum term corresponding to $j=2p$). Note that since $p\neq 2q$, $\tau^{-p+q+1}\neq \tau^{p-3q+1}$. So, when $(p,q)\in R_2$, $\tau$ is a unit non-rational algebraic integer as well. This proves the proposition. 
\end{proof}
We will now investigate the cusp fields of the $(p,2p)$-Dehn fillings of the $(-2,3,8)$-pretzel link complement. We will use the approach taken in \cite[Lemma 6.13]{CDHMMW}. This proposition will play a very crucial role in the proof of Theorem \ref{pretzel_arith_thm}. 
\begin{prop}\label{p2pfilling}
The only natural number $p$ for which the cusp field $\mathbb{Q}(\tau)$ of the $(p,2p)$-Dehn filling on cusp $c_0'$ of the $(-2,3,8)$-pretzel link complement with respect to $(\mathbf{m}_0, \mathbf{l}_0)$ is quadratic imaginary is $p=1$. \end{prop}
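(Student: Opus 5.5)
The key feature of the $(p,2p)$ case is that the Dehn filling equation is a perfect $p$-th power. Substituting $q=2p$ into Equation \ref{Dehneq_pret} gives
$$\mu(\mathbf{m}_0\mathbf{l}_0^{2})^{p}=1,\qquad \mu(\mathbf{m}_0\mathbf{l}_0^{2})=\frac{(1-z_0)^2 z_0}{(1-z_1)^2 z_1}\eqqcolon \omega .$$
So I would not use the polynomials $g_{(p,q)},h_{(p,q)}$ here (they are not monic/unit in this case, which is exactly why Proposition \ref{pret_algint} excludes it). Instead I would use the generalized Dehn surgery coefficients directly. Write $u=\log\mu(\mathbf{m}_0)$ and $v=\log\mu(\mathbf{l}_0)$. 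The filling condition is $p\,u+2p\,v=2\pi\mathrm{i}$, so $u+2v=2\pi\mathrm{i}/p$. Hence $\omega=e^{2\pi \mathrm{i}/p}$ is a \emph{primitive} $p$-th root of unity, not merely a root of $\omega^p=1$.

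The plan has three steps.

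\textbf{Step 1: express a symmetric function of $\omega$ in $\tau$.} Swapping $z_0\leftrightarrow z_1$ sends $\omega$ to $\omega^{-1}$, so $\omega+\omega^{-1}$ is symmetric in $z_0,z_1$. It is therefore a rational function of $(\sigma_1,\sigma_2)$. Pulling back by $\psi$ (Equation \ref{tau_sigma1_pret}) gives an explicit rational function $R(\tau)\in\Q(\tau)$ with $\omega+\omega^{-1}=R(\tau)$. This explicit computation, using $\sigma_1=\tau^2+\tau+1-\tau^{-1}$, $\sigma_2=\tau^2$ and the relation \ref{sigmarel_2_pret}, is the main obstacle. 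I would first try to find a clean closed form for $\omega$ itself, possibly via $\omega-\omega^{-1}$, which is $(z_0-z_1)$ times a symmetric function. Here $(z_0-z_1)^2=\sigma_1^2-4\sigma_2$, so $\Q(\omega)\subset\Q(\tau,\sqrt{\sigma_1^2-4\sigma_2})$.

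\textbf{Step 2: restrict the possible orders $p$.} Suppose $\Q(\tau)$ is quadratic imaginary. Then $2\cos(2\pi/p)=R(\tau)$ is a real element of $\Q(\tau)$, hence rational. So $2\cos(2\pi/p)\in\{2,1,0,-1,-2\}$, i.e. $p\in\{1,2,3,4,6\}$.

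\textbf{Step 3: finite check.}
\begin{itemize}
\item For $p=1$, the $(1,2)$ filling gives \texttt{m003}, whose cusp field is $\Q(\sqrt{-3})$. I would confirm this either from SnapPy or by solving $R(\tau)=2$ away from the complete point $\tau=\mathrm{i}$.
\item For each $p\in\{2,3,4,6\}$, I would solve $R(\tau)=2\cos(2\pi/p)$, i.e. clear denominators and factor the resulting integer polynomial in $\tau$. Among the roots, I would discard those for which the corresponding $(z_0,z_1,z_2,z_3)$ are not all in the upper half-plane or for which $\omega\neq e^{2\pi\mathrm{i}/p}$ exactly (the sign/argument condition coming from the logarithms). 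For the surviving geometric root, I would check that its minimal polynomial has degree greater than $2$, or else that it is real.
\end{itemize}
If some factor turns out to be quadratic, I expect the imaginary-part condition or a direct comparison of the volume or the generalized Dehn coefficient (computed numerically at that point) to show that it is not the $(p,2p)$ filling. The SnapPy/Sage verification of these four or five finite cases completes the proof.
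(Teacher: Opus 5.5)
Your argument is correct. It has the same skeleton as the paper's proof: a cosine of the rotation angle lies in $\Q(\tau)$, rationality bounds $p$, and a finite irreducibility check finishes. The difference is that you work with a coarser invariant.

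The paper uses that $z_0z_1=\sigma_2=\tau^2$ on $\mathcal{U}_0(N)$. So $\omega$ has the square root $x=\frac{(1-z_0)z_0}{(1-z_1)\tau}$, and $x+x^{-1}$ is still symmetric in $z_0,z_1$. This puts the \emph{half-angle} value $2\cos(k\pi/p)=-P_0(\tau)/\tau^3$ in $\Q(\tau)$, where $P_0(\tau)=\tau^7+3\tau^6+2\tau^5-2\tau^3-2\tau^2-\tau+1$. Rationality then gives $\varphi(2p)=2$. Only $p\in\{2,3\}$ survive, and one checks that $P_0$ and $P_0\pm\tau^3$ are irreducible.

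Your $R(\tau)=\omega+\omega^{-1}$ equals $P_0(\tau)^2/\tau^6-2$. So you must also handle $p=4$ and $p=6$, i.e.\ $P_0^2=2\tau^6$ and $P_0^2=3\tau^6$. These cases are harmless, since any root puts $\sqrt2$ or $\sqrt3$ into $\Q(\tau)$. But noticing that is exactly the paper's square-root trick; without it, each case is a degree-$14$ factorization.

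Your route does buy two things.
\begin{itemize}
\item Deriving $\omega=e^{2\pi\mathrm{i}/p}$ from $pu+2pv=2\pi\mathrm{i}$ justifies the primitivity that the paper's degree count needs but states without proof.
\item Working with $\omega+\omega^{-1}$ avoids choosing a sign for the square root.
\end{itemize}
That sign matters for $p=1$. Besides $\tau^2+\tau+1$, solving $R(\tau)=2$ also produces the branch $x=1$, namely $(\tau^2+1)(\tau^5+3\tau^4+\tau^3-3\tau^2-\tau+1)=0$. If you use that fallback, you would have to rule out this whole branch, not just the point $\tau=\mathrm{i}$. In both arguments, it is really the SnapPy identification with \texttt{m003} that settles $p=1$.

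One side remark in your proposal is inaccurate. The $(p,2p)$ fillings have $p-q\le 0$, so they lie in $R_1$, where Proposition \ref{pret_algint} does apply and $\tau$ is a unit. The case excluded there, $p=2q$, consists of the $(2q,q)$ fillings, which the proof of Theorem \ref{pretzel_arith_thm} handles separately. This does not affect your argument.
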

\begin{proof}
Let $p\in \mathbb{N}$. The Dehn filling equation \ref{Dehneq_pret} for the $(p,2p)$-filling reduces to 
\begin{equation*}
\left(\mu(\mathbf{m}_0 \mathbf{l}_0^2)\right)^p=\left(\frac{(1-z_0)^2z_0}{(1-z_1)^2z_1} \right)^p=1.
\end{equation*}
Moreover, $\mu(\mathbf{m}_0 \mathbf{l}_0^2)$ is a primitive $p$-th root of unity $\zeta_p^k$ for some integer $k$ such that $1\le k\le p$ and $gcd(k,p)=1$, where $\zeta_p=\cos(2\pi/p)+ \mathrm{i} \sin(2\pi/p)$. Now, 
\begin{equation*}
\mu(\mathbf{m}_0 \mathbf{l}_0^2)=\frac{(1-z_0)^2z_0}{(1-z_1)^2z_1}=\frac{(1-z_0)^2z_0^2}{(1-z_1)^2\tau^2}. 
\end{equation*}
This gives us that for some integer $k$ such that $1\le k\le p$ and $gcd(k,p)=1$, we can write, 
\begin{align}
2 \cos(k\pi/p)&=\frac{(1-z_0)z_0}{(1-z_1)\tau}+\frac{(1-z_1)\tau}{(1-z_0)z_0}\nonumber\\
&=\frac{\sigma_1-2(\sigma_1^2-2\sigma_2)+\sigma_1^3-3\sigma_1 \sigma_2}{(1-\sigma_1+\sigma_2)\tau}\nonumber\\
&=\frac{(\tau^9+3\tau^8+\tau^7-3\tau^6-4\tau^5-2\tau^4+\tau^3+3\tau^2+\tau-1)\tau}{-\tau^3(\tau^2-1)\tau}\nonumber\\
&=\frac{\tau^7+3\tau^6+2\tau^5-2\tau^3-2\tau^2-\tau+1}{-\tau^3}\label{pretzelp2p}.
\end{align}
Note that it follows from the discussion in the beginning of Section \ref{pretzel arithmetic} that $(1,2)$-Dehn filling on cusp $c_0'$ with respect to $(\mathbf{m}_0, \mathbf{l}_0)$ is isometric to the SnapPy triangulation $$\texttt{Manifold(`m125(0,0)(1,2)').filled\_triangulation()}$$ which one can check from SnapPy \cite{snappy} to be isometric to \texttt{m003}. Manifold \texttt{m003} is arithmetic since it can be identified on SnapPy \cite{snappy} as \texttt{otet02\_{00000}} of the tetrahedral census of \cite{FGGTV}. Furthermore, when $p=1$, the above equation can be simplified to the following equation in $\tau$.  
\begin{align*}
\tau^7+3\tau^6+2\tau^5-4\tau^3-2\tau^2-\tau+1=(\tau-1)(\tau^2+2\tau-1)(\tau^2+\tau+1)^2=0. 
\end{align*}
Since this Dehn filling is hyperbolic, $\tau$ satisfies $\tau^2+\tau+1=0$, i.e., $\tau=\frac{-1+\sqrt{3}\mathrm{i}}{2}$. 

Now assume that $p>1$. Suppose $\Q(\tau)$ is quadratic imaginary. Then $[\Q(\tau): \Q(\cos(k\pi/p))]$ is 2 since $\Q(\cos(k\pi/p))$ is a real field. So, $\Q(\cos(k\pi/p))=\Q$. It is a standard fact that $[\Q(\cos(\pi/p)):\Q]=\frac{\varphi(2p)}{2}$ for $p>1$ where $\varphi$ is Euler's totient function (we note that it was observed and used in \cite[Proof of Lemma 6.13]{CDHMMW}). Moreover, $[\Q(\cos(\pi/p)):\Q]=[\Q(\cos(k\pi/p)):\Q]$. So, $\varphi(2p)=2$ and consequently, $p\in \{2,3\}$. When $p=2$, Equation \ref{pretzelp2p} becomes $\tau^7+3\tau^6+2\tau^5-2\tau^3-2\tau^2-\tau+1=0$ and for $p=3$, Equation \ref{pretzelp2p} reduces to either $\tau^7+3\tau^6+2\tau^5-\tau^3-2\tau^2-\tau+1=0$, or $\tau^7+3\tau^6+2\tau^5-3\tau^3-2\tau^2-\tau+1=0$. One can check in Sage \cite{sagemath} that all three of these polynomials are irreducible, which contradicts that  $\Q(\tau)$ is quadratic imaginary. So, $\Q(\tau)$ is not quadratic imaginary for $p>1$. This concludes the proposition. 
\end{proof}

We note that Proposition \ref{pret_algint} plays a similar role to that of Proposition \ref{Balgint} of the Berge manifold case. However, it leaves out the fillings of the form $N_{(2q,q)}$. But, as the proof of Theorem \ref{pretzel_arith_thm} below shows, this $p=2q$ case can be dealt using the approach taken in the proof of \cite[Lemma 6.13]{CDHMMW}. For the $p\ne 2q$ case in the proof below, we follow the approach in the proof of \cite[Corollary 6.14]{CDHMMW}. 

\begin{thm}\label{pretzel_arith_thm}
\pretzelarith
\end{thm}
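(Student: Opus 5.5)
The plan mirrors the proof of Theorem \ref{berge_arith_thm}. Let $O$ be a hyperbolic orbifold obtained by Dehn filling a single cusp of $N$. By Fact \ref{sym_fact} and Fact \ref{pretzel_nonnegativeDehn}, together with $(\mathbf{m}_0,\mathbf{l}_0)\in S_{125}$, we may assume that $O=N_{(p,q)}$ is the $(p,q)$-filling of $c_0'$ with respect to $(\mathbf{m}_0,\mathbf{l}_0)$, with $p,q\ge 0$.

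If $O$ is arithmetic, then \cite[Proposition 4.4]{NeumReid} and \cite[Proposition 2.7]{NeumReid} force its cusp field to be quadratic imaginary. By Fact \ref{pretzel_cusp_fld} this cusp field is $\Q(\tau)$, where $\tau$ is a cusp modulus and hence lies in the upper half-plane. I will show that $\Q(\tau)$ is quadratic imaginary only for the $(1,2)$-filling, and then cite the identification of that filling with the arithmetic manifold \texttt{m003} made in the proof of Proposition \ref{p2pfilling}. This gives both directions of the statement.

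\emph{Case $p=2q$.} Note that Proposition \ref{p2pfilling} handles the $(p,2p)$ fillings, i.e. $q=2p$, whereas the case excluded in Proposition \ref{pret_algint} is $p=2q$. I will treat $p=2q$ with the same method as Proposition \ref{p2pfilling}.
\begin{itemize}
\item The Dehn filling equation \ref{Dehneq_pret} becomes $\mu(\mathbf{m}_0^2\mathbf{l}_0)^q=1$, with $\mu(\mathbf{m}_0^2\mathbf{l}_0)=\frac{(1-z_0)^4 z_1}{(1-z_1)^4 z_0}$.
\item So this quantity is a primitive $q$-th root of unity. Write it as a square of a symmetric-type expression in $z_0,z_1$, as was done with $\frac{(1-z_0)z_0}{(1-z_1)\tau}$.
\item Take the sum with its inverse and express it through $\sigma_1,\sigma_2$. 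Substituting \eqref{tau_sigma1_pret} gives $2\cos(k\pi/q)$ as a rational function of $\tau$.
\item If $\Q(\tau)$ is quadratic imaginary, then $\cos(k\pi/q)\in\Q$, so $\varphi(2q)\le 2$ and $q\in\{1,2,3\}$.
\item For each such $q$, factor the resulting polynomial in $\tau$ (with Sage). Rule out every quadratic factor whose upper half-plane root would give a hyperbolic structure, and check that the remaining factors are irreducible of degree greater than $2$.
\end{itemize}
I expect this step to be the main obstacle. First, the correct "square root" normalisation of $\mu(\mathbf{m}_0^2\mathbf{l}_0)$ in terms of $\tau$ must be found. Second, a low-degree factor such as $\tau^2+\tau+1$ or $\tau^2+1$ may appear, and it must be excluded by checking that it corresponds to a degenerate or complete structure ($\tau=\mathrm{i}$) rather than to the $(2q,q)$ filling.

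\emph{Case $p\ne 2q$.} By Proposition \ref{pret_algint}, $\tau$ is a unit non-rational algebraic integer in the upper half-plane. So by Fact \ref{quad_unit}, if $\Q(\tau)$ is quadratic imaginary then $\tau\in\{\mathrm{i},\frac{1+\sqrt3\mathrm{i}}{2},\frac{-1+\sqrt3\mathrm{i}}{2}\}$.
\begin{itemize}
\item $\tau=\mathrm{i}$ is the complete structure and is excluded.
\item For the other two values, compute $(\sigma_1,\sigma_2)=(\tau^2+\tau+1-\tau^{-1},\tau^2)$ and recover $\{z_0,z_1\}$ as the roots of $X^2-\sigma_1X+\sigma_2$.
\item Then determine for which non-negative $(p,q)$ equation \ref{Dehneq_pret} holds. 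This amounts to comparing the moduli and arguments of $\frac{(1-z_0)^2z_1}{(1-z_1)^2z_0}$ and $\frac{z_0}{z_1}$.
\item For $\tau=\frac{-1+\sqrt3\mathrm{i}}{2}$ I expect only $(p,q)=(1,2)$, which is not in this case. For $\tau=\frac{1+\sqrt3\mathrm{i}}{2}$ I expect no solution with $(p,q)\ne(0,0)$.
\end{itemize}

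Combining the two cases with Proposition \ref{p2pfilling}, the only arithmetic filling is the $(1,2)$-filling, namely \texttt{m003}.
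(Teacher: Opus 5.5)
Your proposal is correct and follows essentially the same route as the paper. In the case $p=2q$, the paper takes the square root $\frac{(1-z_0)^2z_1}{(1-z_1)^2\tau}$ of $\mu(\mathbf{m}_0^2\mathbf{l}_0)$ (using $z_0z_1=\tau^2$) and obtains $2\cos(k\pi/q)=\tau^5+3\tau^4+\tau^3-\tau^2-2$; this has only real roots for $q=1$ and gives irreducible quintics for $q=2,3$, while the case $p\neq 2q$ is handled exactly as you describe via Proposition \ref{pret_algint} and Fact \ref{quad_unit}.

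One small bookkeeping slip: $(1,2)$ does satisfy $p\neq 2q$, so the branch $\tau=\frac{-1+\sqrt{3}\mathrm{i}}{2}$ is exactly where \texttt{m003} appears within that case. The paper notes that at this point the multiplicative Dehn filling equation holds precisely when $q=2p$, and then uses Proposition \ref{p2pfilling} to force $p=1$.
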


\begin{proof}
Fact \ref{sym_fact} and \ref{pretzel_nonnegativeDehn} imply that it is enough to test the arithmeticity of the hyperbolic orbifolds $N_{(p,q)}$ obtained by $(p,q)$-Dehn filling cusp $c_0'$ with respect to $(\mathbf{m}_0, \mathbf{l}_0)$, where both $p$ and $q$ are non-negative integers. 

First consider the case when $(p,q)\in R_3=\{(p,q): p \ge 0, q \ge 0, p=2q\}$. Equation \ref{Dehneq_pret} in this case becomes
\begin{equation*}
\left (\frac{(1-z_0)^4 z_1}{(1-z_1)^4 z_0} \right )^q=1.
\end{equation*}
In other words, $\left(\mu(\mathbf{m}_0^2 \mathbf{l}_0)\right)^q=1$, which implies $\mu(\mathbf{m}_0^2 \mathbf{l}_0)=\zeta_q^k$ where 
$$\zeta_q=\cos(2\pi/q)+ \mathrm{i} \sin(2\pi/q)$$ and $k$ is an integer such that $1\le k\le q$ and $gcd(k,q)=1$. On the other hand, we note that 
\begin{equation*}
\mu(\mathbf{m}_0^2 \mathbf{l}_0)=\frac{(1-z_0)^4 z_1}{(1-z_1)^4 z_0}=\frac{(1-z_0)^4 z_1^2}{(1-z_1)^4 \sigma_2}=\frac{(1-z_0)^4 z_1^2}{(1-z_1)^4 \tau^2}.
\end{equation*}
So, the above equation gives us 
\begin{equation}
\frac{(1-z_0)^2 z_1}{(1-z_1)^2 \tau}+\frac{(1-z_1)^2 \tau}{(1-z_0)^2 z_1}=2 \cos(k\pi/q), \label{cosformula_pret}
\end{equation}
for integer $k$ such that $1\le k\le q$ and $gcd(k,q)=1$. A quick calculation renders that 
\begin{align*}
\frac{(1-z_0)^2 z_1}{(1-z_1)^2 \tau}+\frac{(1-z_1)^2 \tau}{(1-z_0)^2 z_1}&=\frac{\sigma_1-8\sigma_2+6\sigma_1 \sigma_2-4\sigma_1^2\sigma_2+8\sigma_2^2+\sigma_1^3 \sigma_2-3\sigma_1 \sigma_2^2}{(1-\sigma_1+\sigma_2)^2 \tau}\\
&=\frac{\tau^9+3\tau^8-\tau^7-7\tau^6-\tau^5+3\tau^4+\tau^3+3\tau^2-2}{\tau^4-2\tau^2+1}\\
&=\tau^5+3\tau^4+\tau^3-\tau^2-2.
\end{align*}

When $q=1$, the above equation simplifies to $\tau^5+3\tau^4+\tau^3-\tau^2=0$. All roots of this equation are real. Hence, the $(2,1)$-filling is not hyperbolic. 

We will now show that for $q>1$, $\Q(\tau)$ is not quadratic imaginary. The argument that follows is similar to the $p>1$ case in Proposition \ref{p2pfilling}. More specifically, assuming $\Q(\tau)$ is quadratic imaginary, the observation in \cite[Proof of Lemma 6.13]{CDHMMW} of the standard fact that $[\Q(\cos(\pi/q)):\Q]=\frac{\varphi(2q)}{2}$ will lead us to $q\in \{2,3\}$. When $q=2$, Equation \ref{cosformula_pret} and calculation below it gives the equation $\tau^5+3\tau^4+\tau^3-\tau^2-2=0$. However, one can use Sage \cite{sagemath} to see that $\tau^5+3\tau^4+\tau^3-\tau^2-2$ is irreducible over $\Q$. So, $[\Q(\tau):\Q]=5$, which contradicts that $\Q(\tau)$ is quadratic imaginary. If $q=3$, then we have either $\tau^5+3\tau^4+\tau^3-\tau^2-3=0$, or $\tau^5+3\tau^4+\tau^3-\tau^2-1=0$. But, it can be checked on Sage \cite{sagemath} that both $\tau^5+3\tau^4+\tau^3-\tau^2-3$ and $\tau^5+3\tau^4+\tau^3-\tau^2-1$ are irreducible over $\Q$ as well and so $[\Q(\tau):\Q]=5$ contradicting that $\Q(\tau)$ is quadratic imaginary. So, when $(p,q)\in R_3$, $\Q(\tau)$ is not quadratic imaginary. 

Now, consider the case when $p \ne 2q$, where $p$ and $q$ are both non-negative integers. 
Assume $\Q(\tau)$ is quadratic imaginary. Proposition \ref{pret_algint} tells us that $\tau$ is a non-rational unit algebraic integer. Since $\tau$ is a cusp moduli of $N_{(p,q)}$ by \cite[Proposition 1.6]{CDM}, it lies in the upper-half plane. So, by Fact \ref{quad_unit}, we have $\tau\in \left \{\mathrm{i}, \frac{1+\sqrt{3}\mathrm{i}}{2}, \frac{-1+\sqrt{3}\mathrm{i}}{2}\right\}$. We first rule out $\tau=\mathrm{i}$ as it corresponds to the complete structure at cusp $1$. 

Now, $\tau=\frac{1+\sqrt{3}\mathrm{i}}{2}$ implies $z_0+z_1=\tau^2+\tau+1-\tau^{-1}=2\tau^2+\tau+1=3 \tau-1$. Substituting $z_1$ by $\frac{\tau^2}{z_0}$, we get the formula: $z_0^2+z_0-1=(3z_0-1)\tau$. Solving for $z_0$ and $z_1$ from the previous formula, we find that
$\{z_0,z_1\}$, is $$\{0.148402943598350 + 2.36455298678789\mathrm{i}, 0.351597056401650 + 0.233523224565427\mathrm{i} \}.$$ Taking the norms of $\frac{(1-z_0)}{(1-z_1)}$ and $\frac{z_0}{z_1}$ one can check that these choices of $z_0$ and $z_1$ don't satisfy Equation \ref{Dehneq_pret} for non-negative $p$ and $q$ unless $p=q=0$. 

On the other hand, if $\tau=\frac{-1+\sqrt{3}\mathrm{i}}{2}$, then $z_0+z_1=\sigma_1=\tau^2+\tau+1-\tau^{-1}=-\tau^2$. Since $z_0 z_1=\tau^2$, we get the formula: $z_1^2+\tau^2z_1+\tau^2=0$. Solving for $z_1$ and $z_0$ from this equation gives that $\{z_0,z_1\}$ is 
$$\{1.12196442695186 + 1.05375577424138\mathrm{i}, -0.621964426951856 - 0.187730370456945\mathrm{i}\}.$$
These choices of $z_0$ and $z_1$ satisfy Equation \ref{Dehneq_pret} for non-negative integers $p$ and $q$ if and only if $q=2p$. However, Proposition \ref{p2pfilling} forces $p=1$ for $\mathbb{Q}(\tau)$ to be quadratic imaginary. 

We note that the cusp field $\Q(\tau)$ is not a real field. Now $\Q(\tau)$ is not quadratic imaginary for any non-negative integers $p$ and $q$ other than $p=1$ and $q=2$. Moreover, as  can be observed from SnapPy \cite{snappy}, the $(1,2)$-filling is the arithmetic manifold \texttt{m003} (see the paragraph below Equation \ref{pretzelp2p}). We can now use  \cite[Proposition 4.4 (a)]{NeumReid} and Theorem \ref{tracecusp_pret} (or \cite[Proposition 2.7]{NeumReid}) to conclude the theorem. 
\end{proof}
\begin{remark}
Note that \texttt{m003} is the sister of the figure-eight knot complement and has volume $2v_0$ where $v_0$ is the volume of a regular ideal tetrahedron. 
\end{remark}
\begin{remark}
Denote the meridian and the longitude of cusp $0$ of \texttt{Manifold(`L13n5885’)} chosen by SnapPy \cite{snappy} by $\mathbf{m}_0^{PL}$ and $\mathbf{l}_0^{PL}$ respectively. These are the preferred meridian and longitude corresponding to the unknotted component of the link on the right of Figure \ref{link_pics}. By entering the command $$\texttt{Manifold(`m125’).isomorphisms\_to(Manifold(`L13n5885’))}$$ on SnapPy \cite{snappy}, we see that there is an isomorphism sending $\mathbf{m}_1^{125}$ to $-3\mathbf{m}_0^{PL}-\mathbf{l}_0^{PL}$ and $\mathbf{l}_1^{125}$ to  $4\mathbf{m}_0^{PL}+\mathbf{l}_0^{PL}$. In particular, the arithmetic manifold \texttt{m003} obtained by $(1,2)$-filling on cusp $c_0'$ is obtained by $(5,1)$-filling along the preferred meridian-longitude pair $(\mathbf{m}_0^{PL}, \mathbf{l}_0^{PL})$ on the unknotted component of the link on right of Figure \ref{link_pics}. These can also be verified by entering $$\texttt{Manifold(`L13n5885(5,1)(0,0)').filled\_triangulation().identify()}$$ on SnapPy \cite{snappy}. 
\end{remark}
\begin{remark}
We remark that $\tau=\frac{1+\sqrt{3}\mathrm{i}}{2}$ provides solutions of $p$ and $q$ in Equation \ref{Dehneq_pret} outside non-negative integers\textemdash it gives $p=-2q$ where $q\le 0$. However, note that for $q\le 0$, the $(-2q,q)$-Dehn fillings are isometric to the $\left(-q,-2q\right)$-Dehn fillings by Fact \ref{pretzel_nonnegativeDehn}, which are taken care of by Proposition \ref{p2pfilling}.
\end{remark}

\section{Hidden symmetries and Effectivization} \label{hidd_symm}
There have been a wide variety of studies on which one-cusped Dehn fillings of the Berge manifold can admit hidden symmetries. The earliest such trace back to Hoffman \cite[Proof of Theorem 1.1]{Hoffman_3comm}, where he showed that for integers $m$ and $n$ such that $n>0$ and $gcd(n,7)=1$, all but finitely many $(n,m)$-Dehn filling with respect to a standard framing\footnote{See \cite{Rolfsen} for details on standard framing.} on the unknotted cusp of the Berge manifold lack hidden symmetries. He strengthened this result further in \cite[Theorem 6.1]{Hoffman_hidden} by showing that when both $gcd(n,7)$ and $gcd(n,m)$ are $1$, none of such hyperbolic $(n,m)$-Dehn fillings on the unknotted cusp of the Berge manifold admit hidden symmetries. The proof technique in \cite[Theorem 1.1]{Hoffman_3comm} uses a limiting argument concerning orbifold fundamental groups whereas the proof of \cite[Theorem 6.1]{Hoffman_hidden} uses degree bounds for covers of rigid one-cusped orbifolds. More recently, using separate techniques, \cite[Example 5.3]{CDM} and \cite[Theorem 8.4]{Mon} imply that at-most finitely many Dehn fillings on the unknotted cusp of the Berge manifold can have hidden symmetries. The work in \cite[Example 5.3]{CDM} centers around the non-geometric isolation of cusps and and that in \cite[Theorem 8.4]{Mon} upon the study of certain order $3$ symmetries in relevant horoball packings of $\mathbb{H}^3$. 

On the other hand, using the findings from \cite{AabD}, it was shown in \cite[Example 5.2]{CDM} (by virtue of non-geometric isolation of cusps) that all but finitely many knot complements obtained by one-cusped Dehn filling of the $(-2,3,8)$-pretzel link complement lack hidden symmetries.

Here, we can give an effective statement on the relation of such Dehn fillings with hidden symmetries. Observe that the proof of Theorem \ref{berge_arith_thm} shows that the cusp field of a one-cusped Dehn filling of the Berge manifold can't be quadratic imaginary. On the other hand, Theorem \ref{pretzel_arith_thm} shows that the cusp field of a one-cusped Dehn filling of the $(-2,3,8)$-pretzel link complement is quadratic imaginary if and only if it is the tetrahedral manifold \texttt{m003}. However, the only knot complement in the commensurability class of \texttt{m003} is the figure eight knot complement which does not cover \texttt{m003} (since both have the same volume). Recall that the cusp field is a commensurability invariant (see \cite[Subsection 2.3]{NeumReid}) and consequently we can now apply \cite[Proposition 9.1]{NeumReid} to conclude the following.
\begin{thm}\label{hidd_sym_effect}
\hidsym
\end{thm}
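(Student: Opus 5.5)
The plan is to combine the cusp-field analysis of Sections \ref{arithmetic berge} and \ref{pretzel arithmetic} with the criterion of \cite[Proposition 9.1]{NeumReid}. That criterion says that if a hyperbolic knot complement $K$, other than the figure-eight knot complement, has hidden symmetries, then $K$ covers an orbifold with a rigid cusp. Consequently the cusp field of $K$ is $\Q(\mathrm{i})$ or $\Q(\sqrt{-3})$, which is in particular quadratic imaginary.

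So let $K=\Sp^3-k$ be a hyperbolic knot complement covering a one-cusped hyperbolic Dehn filling $O$ of the Berge manifold $M$ or of the $(-2,3,8)$-pretzel link complement $N$. Since $K\to O$ is a finite cover, $K$ and $O$ are commensurable. The cusp field is a commensurability invariant \cite[Subsection 2.3]{NeumReid}, so the cusp field of $K$ equals that of $O$.

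First I would reduce to the normalized fillings. By Fact \ref{sym_fact} together with Facts \ref{Berge_nonnegativeDehn} and \ref{pretzel_nonnegativeDehn}, $O$ is isometric to $M_{(p,q)}$, filled on $c_1$ along $(\mathbf{m}_1,\mathbf{l}_1)$, or to $N_{(p,q)}$, filled on $c_0'$ along $(\mathbf{m}_0,\mathbf{l}_0)$, with $p,q\ge 0$.

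\textbf{Berge case.} The proof of Theorem \ref{berge_arith_thm} shows that the cusp field $\Q(z_1)$ is never quadratic imaginary. The key ingredients are Proposition \ref{Balgint} and Fact \ref{quad_unit}, together with the explicit exclusion of $z_1\in\{\mathrm{i},\frac{\pm1+\sqrt3\mathrm{i}}{2}\}$. Hence $K$ has a non-quadratic-imaginary cusp field and, by \cite[Proposition 9.1]{NeumReid}, no hidden symmetries. One must also exclude the possibility that $K$ is the figure-eight knot complement. This follows because its cusp field $\Q(\sqrt{-3})$ is quadratic imaginary, which would contradict what we just proved.

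\textbf{Pretzel case.} The proof of Theorem \ref{pretzel_arith_thm} shows that $\Q(\tau)$ is quadratic imaginary only for $(p,q)=(1,2)$, i.e.\ for $O=\texttt{m003}$. This combines Proposition \ref{pret_algint} for $p\neq 2q$, the degree-five irreducibility argument for $p=2q$, and Proposition \ref{p2pfilling}. For all other fillings the argument is identical to the Berge case.

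It remains to treat a knot complement $K$ covering \texttt{m003}. Such a $K$ lies in the arithmetic commensurability class of $\mathrm{PSL}_2(\mathcal{O}_3)$. By Reid's theorem \cite{ReidFig8}, the only arithmetic knot complement is the figure-eight knot complement, so $K$ would have to be it. But the figure-eight knot complement has the same volume $2v_0$ as \texttt{m003}, so a covering would have degree one and hence be an isometry. This is impossible, since \texttt{m003} is not homeomorphic to the figure-eight knot complement (e.g.\ $H_1$ differs). So no knot complement covers \texttt{m003}, and this case is vacuous.

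The delicate point is the precise form of \cite[Proposition 9.1]{NeumReid}: it gives that a non-figure-eight knot complement with hidden symmetries covers a rigid-cusped orbifold, hence has cusp field $\Q(\mathrm{i})$ or $\Q(\sqrt{-3})$. The cusp field must therefore be transported correctly from $K$ to $O$ through commensurability. The \texttt{m003} exception is the only step needing input beyond the earlier results, namely Reid's arithmetic knot theorem plus the volume comparison.
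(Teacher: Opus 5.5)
Your proposal is correct and follows the paper's argument: the cusp fields are not quadratic imaginary by the proofs of Theorems \ref{berge_arith_thm} and \ref{pretzel_arith_thm}, the cusp field is a commensurability invariant, \cite[Proposition 9.1]{NeumReid} then applies, and \texttt{m003} is disposed of via Reid's theorem plus the volume comparison. Your explicit observation that a degree-one cover would be an isometry, impossible since $H_1$ differs, fills in a step the paper leaves implicit.
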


\bibliographystyle{plain}
\bibliography{sistersDehn_bib}

\end{document}